\newcommand\N{\mathbb N}
\newcommand\Z{\mathbb Z}
\newcommand\Q{\mathbb Q}
\newcommand\R{\mathbb R}
\newcommand\C{\mathbb C}
\newcommand\F{\mathbb F}
\newcommand\ro{\mathcal O}
\newcommand\h{\mathbb H}
\DeclareMathOperator\Gal{Gal}
\DeclareMathOperator\End{End}
\DeclareMathOperator\Ker{Ker}
\DeclareMathOperator\re{Re}
\DeclareMathOperator\im{Im}
\newcommand{\qsqrt}[1]{(\sqrt{q_1^*},\dots,\sqrt{q_{#1}^*})}
\newcommand{\qtsqrt}{\qsqrt{t}}
\newcommand{\tqsqrt}[1]{(\sqrt{\tilde q_1},\dots,\sqrt{\tilde q_{#1}})}
\newcommand{\tqrsqrt}{\tqsqrt{r}}
\newtheorem{theorem}{Theorem}
\newtheorem{lemma}{Lemma}
\newtheorem{statement}{Statement}
\newtheorem{corollary}{Corollary}
\title{Method for constructing elliptic curves using complex multiplication and its optimizations}
\author{Grechnikov~E.~A.}
\begin{document}
\maketitle
\begin{abstract}
Elliptic curves over finite fields with predefined conditions in the order are practically constructed
using the theory of complex multiplication. The stage with longest calculations in this
method reconstructs some polynomial with integer coefficients. We will prove theoretical
results and give a detailed account of the method itself and how one can use a divisor of the
mentioned polynomial with coefficients in some extension of the field of rational numbers.
\end{abstract}
\section{Introduction}
Elliptic curves play an important role in a variety of different applications.
For example, elliptic curves form a basis for some public-key cryptosystems
\cite{koblitz, miller}, primality tests \cite{lenstra} and factorization \cite{atkinmorain}
of rational integers. The applications use elliptic curves over finite fields with the order
satisfying several restrictions. For instance, for cryptographical applications
the order should be a prime number or, at least, should have a large prime divisor.

One of methods for constructing elliptic curves with predefined restrictions on the order
is the following. First, we generate an equation of an elliptic curve with random coefficients.
Next, we calculate the order of the generated curve and check whether the order satisfies
the predefined conditions. If so, the construction is done; otherwise, we repeat the process
from the beginning. Possible values for arising orders are distributed approximately
uniformly (the precise statement for prime fields with characteristic greater than 3
can be found in \cite{lenstra}). The calculation of order of an elliptic curve has
a polynomial complexity. However, in practice the complexity grows quite fast,
so this method is quite slow.

The complex multiplication gives another, more practical method for constructing elliptic
curves with predefined restrictions on the order. This article is devoted to the
complex multiplication method. Here we start with calculating an order satisfying
the predefined conditions and then construct an elliptic curve with this order.
Section \ref{method} describes the details and some known optimizations.

We suggest a new optimization for calculations. Further theoretical results,
used by this optimization, are
proved in Sections \ref{omegaprop} and \ref{el.integers}. We describe
the overall approach in Section \ref{divider} and the details in
next sections.

\section{The CM method}\label{method}
\subsection{Theoretical basis}
Hereafter we always assume that $D\in\Z$ satisfies the following condition:
\begin{equation}\label{el.dbase}
D<0\mbox{ and either }D\equiv0\pmod4\mbox{ or }D\equiv1\pmod4.
\end{equation}
Consider the field $K=\Q(\sqrt{D})$. Let $d$ be the discriminant of $K$.
Then $d<0$ and
\begin{itemize}
\item either
\begin{equation}\label{eq.d1}
d\equiv1\pmod4\mbox{ and }d\mbox{ is square-free},
\end{equation}
\item or
\begin{equation}\label{eq.d2}
d\equiv0\pmod4,\qquad\frac d4\mbox{ is square-free},\qquad\frac d4\not\equiv1\pmod4.
\end{equation}
\end{itemize}
In addition, $D=f^2d$, where $f\in\N$. Let $\ro=\Z\left[\frac{d+\sqrt d}2\right]$ be
the ring of algebraic integers in the field $K$. Let $\ro_D=\Z\left[\frac{D+\sqrt D}2\right]$
be the order in $\ro$ with conductor $f$. For any number field $M$ we denote the
ring of integers for $M$ by $\ro_M$; e.g. $\ro_K=\ro$.

We define a \textit{fractional ideal} of the order $\ro_D$ as a subset of $K$ which is
a finitely generated $\ro_D$-module and contains a nonzero element. We define
an \textit{ideal} of $\ro_D$ as a fractional ideal which is a subset of $\ro_D$ (this
is the same as the standard definition of ring ideal except for $\{0\}$,
which is not considered).
We define a \textit{proper} (fractional) ideal of the order $\ro_D$ as a (fractional)
ideal $\mathfrak a$ such that
$\{\beta\in K:\beta\mathfrak a\subset\mathfrak a\}=\ro_D$. All proper fractional
ideals of an order form an abelian group under the multiplication of ideals
(\cite[\S7]{cox}). We denote this group by $I(\ro_D)$.
It is easy to see that \textit{principal} fractional ideals, i.e. $\alpha\ro_D$
with $\alpha\in K^*$, form a subgroup in $I(\ro_D)$. We denote this subgroup
by $P(\ro_D)$. Two ideals $\mathfrak a$ and $\mathfrak b$ are \textit{equivalent}
when they differ by multiplication by a principal ideal. It is easy to see
that this relation is an equivalence relation; we denote it by
$\mathfrak a\sim\mathfrak b$. For the sake of brevity we call a class of
equivalent proper fractional ideals an \textit{ideal class}.
Since $P(\ro_D)$ is a subgroup, all ideal classes form a factorgroup
$\mathcal H_D=I(\ro_D)/P(\ro_D)$. It is called the \textit{ideal class group}
of $\ro_D$. The ideal class group is a finite abelian group (\cite[\S7]{cox}).
Since $\ro_D$ and $K$ are invariant under complex conjugation, the
conjugation of a fractional ideal as a set is itself a fractional ideal;
the complex conjugation induces a well-defined operation on $\mathcal H_D$.

We define a \textit{quadratic form} as an expression of the form $Ax^2+Bxy+Cy^2$,
where $A,B,C\in\Z$. We also use $(A,B,C)$ as another notation for the same
quadratic form. We define a \textit{discriminant} of the quadratic form as
$B^2-4AC$. Two forms are \textit{equivalent} if one can be transformed to another
using change of variables $x'=ax+by,y'=cx+dy$ with $a,b,c,d\in\Z$, $ad-bc=1$;
it is easy to see that this is indeed an equivalence relation. The quadratic
form $(A,B,C)$ is \textit{positive definite} if $A>0$ and $B^2-4AC<0$;
it is \textit{primitive} if $\gcd(A,B,C)=1$. Hereafter we consider only
primitive positive definite forms of the discriminant $D$, calling them
just \textit{forms} for the sake of brevity. We define the \textit{root}
of the form as the (only) root $\tau$ of the equation $A\tau^2+B\tau+C=0$
from the upper half-plane $\h=\{z\in\C:\im z>0\}$, i.e.
$\tau=\frac{-B+\sqrt{D}}{2A}\in K\cap\h$. The form $(A,B,C)$ is
\textit{reduced} when $|B|\le A\le C$ and if $B<0$, then $|B|<A<C$.
Every form is equivalent to exactly one reduced form (\cite[Theorem 2.8]{cox}).

There is one-to-one correspondence between elements of the group $\mathcal H_D$
and reduced forms. We denote this correspondence by $\mathfrak h$. Namely
(\cite[Theorem 7.7]{cox}), a form $\xi=(A,B,C)$ with the root $\tau$
corresponds to a class $\mathfrak h(\xi)=\mathfrak h(A,B,C)$ of $\ro_D$-ideals
containing $\langle1,\tau\rangle_\Z$ (which is a proper fractional $\ro_D$-ideal),
and two equivalent forms correspond to the same ideal class.

It is easy to enumerate all reduced forms: obviously, such a form has
$|B|\le A\le\sqrt{\frac{|D|}3}$ and for fixed $A,B$ there exists at most one $C$.
So reduced forms give a convenient way to organize elements of $\mathcal H_D$.

The classical $j$-invariant is the function from the upper half-plane $\h$ to $\C$
(\cite[\S46]{weber}). It can be also defined on lattices in $\C$ (\cite[\S10]{cox})
so that it does not change when a lattice is multiplied by any nonzero complex number
and $j(\tau)=j(\langle1,\tau\rangle_\Z)$ for $\tau\in\h$. Any proper fractional
$\ro_D$-ideal $\mathfrak a$ is also a lattice in $\C$. Obviously, $j(\mathfrak a)$
depends only on the ideal class of $\mathfrak a$. From the computational point of view,
if the fractional ideal $\mathfrak a$ belongs to the ideal class corresponding to
the form $Ax^2+Bxy+Cy^2$ with the root $\tau$ (i.e. $\mathfrak a\sim\langle1,\tau\rangle_\Z$),
then $j(\mathfrak a)=j(\tau)$.

For any elliptic curve and $n\in\Z$ we define the map $[n]$ which maps a point $P$ to $nP$.
In particular, $[1]$ is the identity map. We define a \textit{isogeny} of two elliptic curves
as a morphism (in the sense of algebraic geometry) which maps the infinite point
of the first curve to the infinite point of the second curve. We define an \textit{endomorphism}
of an elliptic curve as an isogeny of the curve to itself. For any $n\in\Z$ and any
elliptic curve the map $[n]$ is an endomorphism (\cite[Example III.4.1]{silverman})
and commutes with any other endomorphism (because any isogeny is a homomorphism of groups of points
due to \cite[Example III.4.8]{silverman}); the ring of endomorphisms of any elliptic curve
is a $\Z$-module with an action $n\varphi = [n]\circ\varphi$, where $n\in\Z$, $\varphi$ is
an endomorphism. Endomorphisms $\{[n]:n\in\Z\}$ form the ring isomorphic to $\Z$
(\cite[Proposition III.4.2]{silverman}).

The ring of endomorphisms of an elliptic curve over $\C$ is either equal to
$\{[n]:n\in\Z\}$ or isomorphic to an order in some imaginary quadratic field
(\cite[Corollary III.9.4 and Exercise 3.18b]{silverman}). In the last case
the curve is said to have \textit{complex multiplication} by this order.
There exists exactly $|\mathcal H_D|$ nonisomorphic elliptic curves with
complex multiplication by $\ro_D$ (\cite[Proposition C.11.1]{silverman}).
These curves can be characterized as follows: the $j$-invariant of a curve
equals one of values of modular $j$-invariant in an ideal representing
an ideal class for $\ro_D$. These values are called \textit{singular values}
(of the function $j$).
Any singular value generates over $K$ the same field $L=L(D)=K(j(\mathfrak a))$,
which is called the \textit{ring class field} for $\ro_D$ (\cite[Theorem 11.1]{cox}).
The Galois group of the extension $L/K$ is isomorphic to $\mathcal H_D$
(\cite[\S9]{cox}). We denote the canonical isomorphism by $\Omega$;
$\Omega$ maps an ideal class containing $\mathfrak b$ to the automorphism
mapping $j(\mathfrak a)$ to $j(\mathfrak a\mathfrak b^{-1})$
(\cite[Corollary 11.37]{cox}). The complex conjugation acts as follows:
$\overline{j(\mathfrak a)}=j(\overline{\mathfrak a})$ by the definition of $j$
(\cite[\S10]{cox}), $\mathfrak a\overline{\mathfrak a}\sim\ro_D$
(\cite[(7.6)]{cox}), therefore, $\overline{j(\mathfrak a)}=j(\mathfrak a^{-1})$.

Let us consider the polynomial $H_D[j](x)=\prod_{i=1}^h(x-j(\alpha_i))$, where
$h=|\mathcal H_D|$ and $\alpha_i$ represent all ideal classes of $\ro_D$.
The coefficients of $H_D$ are elements of $L$, are invariant under
the action of $\Gal(L/K)$ and complex conjugation, therefore, they lie in $\Q$.
Moreover, the values $j(\alpha_i)$ are algebraic integers (\cite[Theorem 11.1]{cox}),
so $H_D[j](x)\in\Z[x]$.

Let $p$ be a prime number, $n$ a natural number, $q=p^n$. Let $E$ be
an elliptic curve defined over the finite field $\F_q$. Unless explicitly
specified, we consider $\overline\F_q$-points and $\overline\F_q$-endomorphisms
of the curve $E$. The order of the curve is the number of $\F_q$-points.
The ring of endomorphisms $\End(E)$ is isomorphic either to an order in
some imaginary quadratic field or to an order in some quaternion algebra over $\Q$
(\cite[Corollary III.9.4 and Theorem V.3.1]{silverman}). In the last case the
curve $E$ is called \textit{supersingular}, and we are not interested in these.
In the first case $\End(E)\cong\ro_D$ for some $D$ satisfying \eqref{el.dbase};
$\End(E)=\langle[1],\alpha\rangle_\Z$, where $\alpha$ is some endomorphism of $E$.
It appears (\cite[Theorem 13.14]{lang}) that the curve and one endomorphism $\alpha$
can be "lifted" to $\C$ in the following sense: there exists a number field $L'$,
an elliptic curve $E'$ defined over $L'$, an endomorphism $\alpha'$ of $E'$,
an ideal $\mathfrak B'\subset\ro_{L'}$ lying above $p$ (i.e. $\mathfrak B'\cap\Z=p\Z$)
and a reduction of $E'$ modulo $\mathfrak B'$ so that the reduced curve is
isomorphic to $E$ and $\alpha'$ corresponds to $\alpha$ under the reduction.
Since $\alpha\not\in\{[n]:n\in\Z\}$, we have $\alpha'\not\in\{[n]:n\in\Z\}$,
so $\End(E')\not\cong\Z$ and $E'$ has a complex multiplication by some order
in some imaginary quadratic field. Due to properties of reduction (\cite[Theorem 13.12]{lang}),
it induces an isomorphism of $\End(E')$ to a subring in $\End(E)$. Since
$\alpha'$ reduces to $\alpha$, we have $\End(E')\cong\End(E)\cong\ro_D$.

The ring $\End(E)$ contains a Frobenius isogeny $Fr:(x,y)\mapsto(x^q,y^q)$ and
the dual isogeny $\widehat{Fr}$. We have $Fr\circ\widehat{Fr}=[q]$
(\cite[Theorem III.6.2 and Proposition 2.11]{silverman}) and
$[|E(\F_q)|]=[|\Ker([1]-Fr)|]=([1]-Fr)\circ([1]-\widehat{Fr})$
(the first equality follows from the fact that $Fr$ fixes $\F_q$-points
and only them; the second equality follows from
\cite[Theorem III.4.10, Corollary III.5.5, Theorem III.6.2]{silverman}).
Let $\pi\in\ro_D\cong\End(E)$ be the element corresponding to $Fr$
and $\overline\pi$ be the element corresponding to $\widehat{Fr}$.
Then $\pi\overline\pi=q$ and $(1-\pi)(1-\overline\pi)=|E(\F_q)|$.
In particular, if $\pi\not\in\R$, these equations imply that $\overline\pi$
is indeed a complex conjugate to $\pi$; otherwise, $\pi\in\R\cap\ro_D=\Z$,
so $Fr=[\pi]$ and in this case $\widehat{Fr}=Fr$ (\cite[Theorem III.6.2]{silverman}),
hence $\overline\pi$ is equal to complex conjugate to $\pi$ too.

Since $\pi\in\ro_D$, there exist $u,v\in\Z$ such that $\pi=\frac{u+v\sqrt{D}}2$.
Then $\overline\pi=\frac{u-v\sqrt{D}}2$ and $q=\pi\overline\pi=\frac{u^2-Dv^2}4$ or
$$
4q=u^2+|D|v^2.
$$
The order of $E$ is $1-\pi-\overline\pi+\pi\overline\pi=q+1-u$.
Due to \cite[Exercise 5.10]{silverman} the non-supersingularity of $E$ implies $\gcd(q,u)=1$.

Let us sum up the above. If $E$ is a non-supersingular elliptic curve over $\F_q$,
then there exist an integer $D$, a number field $L'$ and an elliptic curve $E'$ over $L'$
such that
\begin{itemize}
\item $E'$ has a complex multiplication by $\ro_D$,
\item there exists a reduction of $E'$ isomorphic to $E$,
\item the order of $E$ is $q+1-u$, where $u\in\Z$ is such that for some $v\in\Z$
the equality $4q=u^2+|D|v^2$ holds.
\end{itemize}

\subsection{Basic algorithm}\label{basemethod}
We want to go in the other direction and construct such curves $E$. In order to do this,
we implement the following scheme.
\begin{enumerate}
\item Select the numbers $q=p^n$, $p$ is a prime, and $\hat u,\hat v,D\in\Z$ such that
$D$ satisfies \eqref{el.dbase},
\begin{equation}\label{el.order}
4q=\hat u^2+|D|\hat v^2,
\end{equation}
\begin{equation}\label{el.uprime}
\gcd(\hat u,q)=1,
\end{equation}
and the field size $q$ and the order of a future curve $q+1-\hat u$ satisfy
the predefined restrictions required by concrete applications.
\item Calculate the polynomial $H_D[j](x)$.
\item Reduce the polynomial $H_D[j](x)$ modulo $p$. Obtain the polynomial over
$\F_p\subset\F_q$; this polynomial (as we will show) splits into linear factors
in $\F_q$. Calculate any root of this polynomial. Generate an
elliptic curve $E''$ over $\F_q$ such that its $j$-invariant equals the found root.
\item The curve $E''$ has complex multiplication by $\ro_D$. An isomorphism
does not change the ring of complex multiplication, but can change the number
of $\F_q$-points. Construct the curve isomorphic to $E''$ with the order $q+1-\hat u$.
\end{enumerate}
We define the \textit{Kronecker symbol} $\left(\frac ab\right)$, where $a\in\Z$,
$b\in\N$, as follows. If $b$ is an odd prime, the Kronecker symbol equals
the Legendre symbol. If $b=2$, the Kronecker symbol is defined only for
$a\equiv0,1\pmod4$ and equals
$$
\left(\frac a2\right)=\begin{cases}
1,&\mbox{if }a\equiv1\pmod 8,\\
-1,&\mbox{if }a\equiv5\pmod 8,\\
0,&\mbox{if }a\equiv0\pmod4.
\end{cases}
$$
\label{el.kronekerdef}
In the general case the Kronecker symbol is defined as being multiplicative in $b$.

The conditions \eqref{el.order} and \eqref{el.uprime} impose quite strong restrictions
on $q$ and $p$. In particular, the following lemma holds.
\begin{lemma}\label{el.stupid}
Let $d<0$ satisfy one of conditions \eqref{eq.d1} or \eqref{eq.d2},
$D=df^2$. Assume that $q=p^n$ and some integers $u$, $v$ satisfy the conditions
$$4q=u^2+|D|v^2,\qquad \gcd(q,u)=1.$$
Then
\begin{enumerate}
\item
\begin{equation}\label{el.dsquare}
\left(\frac Dp\right)=\left(\frac dp\right)=1;
\end{equation}
\item $p\nmid f$;
\item $p\ro=\mathfrak p\overline{\mathfrak p}$, where $\mathfrak p\ne\overline{\mathfrak p}$ are prime ideals of $\ro$;
\item $\frac{u+v\sqrt{D}}2\in\ro_D$;
\item $$\frac{u+v\sqrt{D}}2\ro=\mathfrak p^n\mbox{ or }\frac{u+v\sqrt{D}}2\ro=\overline{\mathfrak p}^n.$$
\end{enumerate}
\end{lemma}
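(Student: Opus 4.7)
The plan is to extract everything from the relation $4q = u^2 + |D|v^2$ combined with $\gcd(u, q) = 1$. I would first prove parts 1 and 2 together by showing $p \nmid D$. For $p$ odd, $p \mid D$ would reduce $4q = u^2 - Dv^2$ modulo $p$ to $p \mid u^2$, contradicting $\gcd(u, p) = 1$. For $p = 2$, the hypothesis forces $u$ odd, so $u^2 \equiv 1 \pmod 8$, while $4q = 2^{n+2} \equiv 0 \pmod 8$ for $n \ge 1$; this yields $|D|v^2 \equiv 7 \pmod 8$, hence $v$ odd and $|D| \equiv 7 \pmod 8$, i.e.\ $D \equiv 1 \pmod 8$, so $D$ is odd. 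In both cases $p \nmid D = df^2$, so $p \nmid f$ (part 2) and $p \nmid d$. Multiplicativity of the Kronecker symbol in the numerator then gives $\left(\frac{D}{p}\right) = \left(\frac{d}{p}\right)$, and the common value is $1$: for odd $p$ the congruence $u^2 \equiv Dv^2 \pmod p$ with $p \nmid v$ (else $p \mid u$) exhibits $D$ as a square mod $p$; for $p = 2$ we already have $D \equiv 1 \pmod 8$.

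Part 3 is then the classical splitting criterion for rational primes in an imaginary quadratic field: since $d$ is the field discriminant and $\left(\frac{d}{p}\right) = 1$, the prime $p$ splits in $\ro$ as $p\ro = \mathfrak p \overline{\mathfrak p}$ with $\mathfrak p \ne \overline{\mathfrak p}$. For part 4, I would write $\alpha := \frac{u + v\sqrt D}{2}$ in the basis $\{1, (D + \sqrt D)/2\}$ of $\ro_D$: the identity $\alpha = \frac{u - vD}{2} + v \cdot \frac{D + \sqrt D}{2}$ reduces membership in $\ro_D$ to the congruence $u \equiv vD \pmod 2$. If $D \equiv 0 \pmod 4$, then $4q \equiv u^2 \pmod 4$ forces $u$ even, and $vD$ is even; if $D \equiv 1 \pmod 4$, the same reduction gives $u^2 \equiv v^2 \pmod 4$, so $u \equiv v \equiv vD \pmod 2$.

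Part 5 is the main step. From $\alpha \bar\alpha = (u^2 - Dv^2)/4 = q = p^n$, the ideal $(\alpha) \subset \ro$ has norm $p^n$; combined with $p\ro = \mathfrak p \overline{\mathfrak p}$ and unique factorization of ideals in $\ro$, this forces $(\alpha) = \mathfrak p^a \overline{\mathfrak p}^b$ with $a + b = n$. I then need to rule out $\min(a, b) \ge 1$, which would give $(\alpha) \subset \mathfrak p \overline{\mathfrak p} = p\ro$, i.e.\ $p \mid \alpha$ in $\ro$. Writing $\sqrt D = f \sqrt d$ and expanding $\alpha$ in an integral basis of $\ro$ --- namely $\{1, (1 + \sqrt d)/2\}$ in case \eqref{eq.d1} and $\{1, \sqrt{d/4}\}$ in case \eqref{eq.d2} (where $p$ must be odd, since $p = 2$ forced $d$ odd above) --- divisibility of $\alpha$ by $p$ in $\ro$ translates into divisibility of the rational coordinate of $\alpha$ by $p$, which forces $p \mid u$, contradicting $\gcd(u, p) = 1$. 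The main obstacle is this bookkeeping in the two shapes of $\ro$; once the integral-basis coordinates of $\alpha$ are written out, the contradiction is immediate.
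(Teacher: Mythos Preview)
Your argument is correct and follows essentially the same route as the paper: deduce $p\nmid D$ from $4q=u^2+|D|v^2$ and $\gcd(u,p)=1$, read off parts~1--3, verify the parity congruence for part~4, and for part~5 factor $(\alpha)(\overline{\alpha})=\mathfrak p^n\overline{\mathfrak p}^n$ and exclude $p\mid\alpha$. Your treatment of $p=2$ in part~1 (reducing mod~$8$ to get $D\equiv1\pmod8$) is in fact more careful than the paper's, which only reduces mod~$p$.

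One simplification for part~5: the integral-basis bookkeeping in the two shapes of $\ro$ is unnecessary. If $p\mid\alpha$ in $\ro$ then also $p\mid\overline{\alpha}$ (complex conjugation fixes $p$), hence $p\mid(\alpha+\overline{\alpha})=u$ in $\ro\cap\Z=\Z$, contradicting $\gcd(u,p)=1$. This is what the paper's one-line assertion ``$p\nmid u$ implies $p\ro\nmid\frac{u\pm v\sqrt D}{2}\ro$'' is encoding, and it avoids splitting on~\eqref{eq.d1} versus~\eqref{eq.d2}.
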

\begin{proof}
The equality $4q=u^2+|D|v^2$ and the condition $p\nmid u$ imply that $p\nmid D$ and $p\nmid v$.
Reducing modulo $p$, we obtain $u^2-Dv^2\equiv0\pmod p$, so $D\equiv(uv^{-1})^2\pmod p$.
To conclude the proof of the first assertion, it remains to note that $D=df^2$.

The second assertion follows obviously from $p\nmid D$.

The third assertion follows from the first one due to the well-known fact from the theory
of quadratic fields (e.g. \cite[Propositions 13.1.3 and 13.1.4]{ireland}).

To prove the fourth assertion, we reduce the equality $4q=u^2+|D|v^2$ modulo 2.
If $D$ is even, then $u$ is even, $\ro_D=\Z\left[\frac{D+\sqrt{D}}2\right]=\Z\left[\frac{\sqrt{D}}2\right]$
and $\frac{u+v\sqrt{D}}2=\frac u2+v\frac{\sqrt D}2\in\ro_D$. If $D$ is odd, then $u\equiv v\pmod2$,
$\ro_D=\Z\left[\frac{D+\sqrt D}2\right]=\Z\left[\frac{1+\sqrt D}2\right]$ and
$\frac{u+v\sqrt D}2=\frac{u-v}2+v\frac{1+\sqrt D}2\in\ro_D$.

Finally, note that
$$
\frac{u+v\sqrt{D}}2\ro\cdot\frac{u-v\sqrt D}2\ro=q\ro=p^n\ro=\mathfrak p^n\overline{\mathfrak p}^n.
$$
Since $p\nmid u$, we have $\mathfrak p\overline{\mathfrak p}=p\ro\nmid\frac{u\pm v\sqrt{D}}2\ro$.
The last assertion follows from the uniqueness of the factorization to prime ideals in $\ro$.
\end{proof}

Now we discuss some implementation details of the generic scheme.

The implementation of the first stage depends on restrictions for the field size $q$
and the curve order.

If $q$ is fixed, scan over integers $D$ satisfying \eqref{el.dbase}. For every $D$,
first check the necessary condition \eqref{el.dsquare}; if it does not hold, continue to
the next $D$. Assume that $D$ satisfies \eqref{el.dsquare}. Apply the Cornacchia algorithm
(\cite{cornacchia}) that solves the equation $x^2+|D|y^2=m$, to $m=4q$. If there is no
solution, continue to the next $D$. If a solution is found, check whether $q+1\pm x$
satisfies the restrictions for the order.

If $q$ is not fixed, it is more efficiently to fix $D$ instead of the previous method.
First, fix $D$ satisfying \eqref{el.dbase}. Next, generate $\hat u$, $\hat v$ at
random and calculate $q$ from \eqref{el.order} and $q+1\pm\hat u$; repeat until
the required restrictions are met. Some improvements of this method are
suggested in \cite{baier} and \cite{efficientgen}. In essence, these improvements
implement the following idea: one can select parameters $\hat u$, $\hat v$ less
randomly and guarantee the absence of small prime divisors of
$q=\frac{\hat u^2+|D|\hat v^2}4$ and $q+1\pm\hat u$ (or, at least, decrease the
probability of such divisors). As an example, assume the following restrictions:
$p$ is odd and one of $q+1\pm u$ is an odd prime
(that is the case in \cite{efficientgen}). It is easy to see that $D\equiv5\pmod 8$
and $\hat u$, $\hat v$ must be odd. \cite{efficientgen} suggests starting from
$\hat u=210\hat u_0+1$, $\hat v=210\hat v_0+105$, $\hat u_0$, $\hat v_0$ are random
integers; if the initial values are bad, continue with adding to $\hat u$ numbers
106 and $104 = 210 - 106$ in turn. Note that $210=105\cdot2=2\cdot3\cdot5\cdot7$.
This choice guarantees that $\frac{\hat u^2+|D|\hat v^2}4$ and one of $q+1\pm\hat u$
do not divide by 2, 3, 5, 7. The method from \cite{baier} uses more small divisors
and is cumbersome, so we do not quote it here. The performance of different
methods is compared in \cite{efficientgen}.

The second stage consists of calculating the polynomial $H_D[j](x)$.
Enumerate all reduced forms (there are $h=|\mathcal H_D|$ of them).
Calculate their roots $\tau_1,\dots,\tau_h$. Calculate the values
$j(\tau_1),\dots,j(\tau_h)$ as complex numbers with sufficiently large precision.
Calculate the coefficients of the polynom $H_D[j](x)$ approximately.
If the precision is large enough, then possible error in coefficients
is less than $\frac12$ and the exact values (which are integer numbers) can be calculated
by rounding.

For a number field $M$, a prime ideal $\mathfrak C\subset\ro_M$ and $z\in\ro_M$
we denote by $R_{\mathfrak C}(z)$ the reduction of $z$ modulo $\mathfrak C$.
So $R_{\mathfrak C}$ is a map from $\ro_M$ to a finite field. The map $R_{\mathfrak C}$
also acts on polynomials from $\ro_M[x]$, reducing every coefficient.

For the third stage we must show that the polynomial
$$R_{p\Z}(H_D[j](x))$$
splits into linear factors in $\F_q$. Also we must construct an elliptic
curve by its $j$-invariant.

Lemma \ref{el.stupid} implies that $p\ro=\mathfrak p\overline{\mathfrak p}$.
Let $\mathfrak B\subset\ro_L$ be a prime ideal lying above $\mathfrak p$. Since
$\mathfrak B\cap\Z=\mathfrak p\cap\Z=p\Z$, we have
\begin{equation}\label{el.hdreduction}
R_{p\Z}(H_D[j](x))=R_{\mathfrak B}(H_D[j](x))=\prod_{i=1}^h(x-R_{\mathfrak B}(j(\alpha_i)))
\end{equation}
so $R_{p\Z}(H_D[j](x))$ splits into linear factors in $\ro_L/\mathfrak B$. Therefore,
it remains to prove the following theorem.
\begin{theorem}\label{el.subsetinq}
\begin{equation}
\ro_L/\mathfrak B\subset\F_q.
\end{equation}
\end{theorem}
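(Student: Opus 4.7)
The plan is to bound the size of the residue field $\ro_L/\mathfrak{B}$ using Artin reciprocity for the ring class field $L/K$. By Lemma~\ref{el.stupid} we have $p\ro=\mathfrak{p}\overline{\mathfrak{p}}$ with $\mathfrak{p}\ne\overline{\mathfrak{p}}$, so $\ro/\mathfrak{p}\cong\F_p$ and hence $|\ro_L/\mathfrak{B}|=p^{f(\mathfrak{B}/\mathfrak{p})}$. It therefore suffices to show that the residue degree $f(\mathfrak{B}/\mathfrak{p})$ divides $n$; the inclusion $\ro_L/\mathfrak{B}\subset\F_q$ will then follow because $\F_q$ contains a unique subfield of each size $p^m$ with $m\mid n$.

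Since $L/K$ is abelian and $p\nmid f$ by Lemma~\ref{el.stupid}, the prime $\mathfrak{p}$ is unramified in $L/K$ (the ring class field is unramified at primes of $K$ not dividing the conductor), and so possesses a well-defined Frobenius $\sigma\in\Gal(L/K)$ whose order equals $f(\mathfrak{B}/\mathfrak{p})$. By Artin reciprocity for the ring class field (\cite[\S9]{cox}), $\Omega^{-1}(\sigma)$ is (up to an inversion that does not affect the order) the class of $\mathfrak{p}_D:=\mathfrak{p}\cap\ro_D$, a proper $\ro_D$-ideal satisfying $\mathfrak{p}_D\ro=\mathfrak{p}$. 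So it remains to show that this class has order dividing $n$ in $\mathcal{H}_D$.

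For this I use Lemma~\ref{el.stupid} once more: after possibly swapping $\mathfrak{p}$ and $\overline{\mathfrak{p}}$, we have $\mathfrak{p}^n=\frac{u+v\sqrt{D}}{2}\ro$, with $\frac{u+v\sqrt{D}}{2}\in\ro_D$. The natural isomorphism between the group of proper fractional $\ro_D$-ideals prime to $f$ and the group of fractional $\ro$-ideals prime to $f$, namely $\mathfrak{a}\mapsto\mathfrak{a}\ro$ (\cite[Proposition~7.20]{cox}), sends $\mathfrak{p}_D$ to $\mathfrak{p}$ and the principal $\ro_D$-ideal $\frac{u+v\sqrt{D}}{2}\ro_D$ to $\frac{u+v\sqrt{D}}{2}\ro=\mathfrak{p}^n$, so $\mathfrak{p}_D^n=\frac{u+v\sqrt{D}}{2}\ro_D$ is principal. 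Hence the class of $\mathfrak{p}_D$ in $\mathcal{H}_D$ has order dividing $n$, as required.

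The main obstacle is packaging together the reciprocity statement --- that the Frobenius at an unramified prime $\mathfrak{p}$ of $\ro$ corresponds under $\Omega^{-1}$ (up to the convention-dependent inversion built into the excerpt's definition of $\Omega$) to the class of $\mathfrak{p}\cap\ro_D$ in $\mathcal{H}_D$ --- with the compatibility that the correspondence $\mathfrak{a}\leftrightarrow\mathfrak{a}\ro$ carries principal ideals to principal ideals. Both ingredients are standard (\cite[\S\S7,~9]{cox}), but must be cited carefully; everything else reduces to a direct computation using Lemma~\ref{el.stupid}.
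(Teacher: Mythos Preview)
Your proof is correct and follows essentially the same route as the paper's: both show that the Artin symbol $\left(\frac{L/K}{\mathfrak p}\right)$ has order dividing $n$ and deduce the residue-field inclusion. The only cosmetic difference is bookkeeping---the paper works on the $\ro$-side, showing $\mathfrak p^n=\hat\pi\ro\in P_{K,\Z}(f)$ lies in the kernel of the Artin map and then reading off that $x\mapsto x^q$ fixes $\ro_L/\mathfrak B$, whereas you pass through the isomorphism to $\mathcal H_D$ and phrase the conclusion as ``residue degree divides $n$''; one small wording point: rather than ``swapping $\mathfrak p$ and $\overline{\mathfrak p}$'' (which you cannot do once $\mathfrak B$ is fixed above $\mathfrak p$), note that Lemma~\ref{el.stupid} gives $\mathfrak p^n$ generated by one of $\frac{u\pm v\sqrt D}2$, both of which lie in $\ro_D$.
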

\begin{proof}
Let $\mathfrak c\subset\ro$ be a prime ideal unramified in $L$.
Let $\mathfrak C\subset\ro_L$ be a prime ideal lying above $\mathfrak c$.
Let $\left(\frac{L/K}{\mathfrak C}\right)$ denote the \textit{Artin symbol}
\cite[\S5]{cox} (it is defined for any Galois extension $K\subset L$,
but we use it only for the fields $K$ and $L$ defined above),
namely, the unique (\cite[Lemma 5.19]{cox}) element $\sigma\in\Gal(L/K)$
such that $\sigma(\alpha)\equiv\alpha^{Norm(\mathfrak c)}\pmod{\mathfrak C}$
for any $\alpha\in\ro_L$. Since $\Gal(L/K)\cong\mathcal H_D$ is Abelian,
the Artin symbol depends only on $\mathfrak c$ (\cite[Corollary 5.21]{cox})
and can be denoted as $\left(\frac{L/K}{\mathfrak c}\right)$. For
a fractional $\ro$-ideal $\mathfrak b=\mathfrak c_1^{s_1}\dots\mathfrak c_k^{s_k}$
we define
$\left(\frac{L/K}{\mathfrak b}\right)=\left(\frac{L/K}{\mathfrak c_1}\right)^{s_1}
\dots\left(\frac{L/K}{\mathfrak c_k}\right)^{s_k}$. The map $\left(\frac{L/K}{\cdot}\right)$
is a homomorphism from the group of those fractional $\ro$-ideals whose factorization
does not contain prime ideals ramified in $L$, to the group $\Gal(L/K)$. This homomorphism
is called the \textit{Artin map}.

Let $P_{K,\Z}(f)$ (\cite[\S9]{cox}) denote the subgroup of fractional $\ro$-ideals generated
by principal ideals of the form $\alpha\ro$, $\alpha\in\ro$, $\alpha\equiv a\pmod{f\ro}$
for some $a\in\Z$ with $\gcd(a,f)=1$. According to \cite[\S9]{cox}, the ring class field $L$
for $\ro_D$ is the unique abelian extension of $K$ such that
\begin{itemize}
\item all prime ideals $\ro$ ramified in $L$ divide $f\ro$ (consequently, all ideals
from $P_{K,\Z}(f)$ are unramified in $L$: if $\alpha\equiv a\pmod{f\ro}$, then
$\gcd(\alpha\ro,f\ro)=\gcd(a\ro,f\ro)=\gcd(a,f)\ro=\ro$, so $\alpha\ro$ is prime to $f\ro$),
\item the kernel of the Artin map is $P_{K,\Z}(f)$.
\end{itemize}
Let $\hat\pi=\frac{\hat u+\hat v\sqrt D}2$; Lemma \ref{el.stupid} implies that either
$\mathfrak p^n=\hat\pi\ro$ or $\mathfrak p^n=\overline{\hat\pi}\ro$. In both cases
the ideal $\mathfrak p^n$ is principal and lies in $P_{K,\Z}(f)$ (this is easy to see
from $\hat\pi=\frac{\hat u+\hat v\sqrt D}2\equiv\frac{\hat u-f\hat vd}2\pmod{f\ro}$,
$\overline{\hat\pi}=\frac{\hat u-\hat v\sqrt D}2\equiv\frac{\hat u+f\hat vd}2\pmod{f\ro}$,
from the definition of $P_{K,\Z}(f)$, Lemma \ref{el.stupid} and \eqref{el.order}).
Therefore, $\mathfrak p^n$ lies in the kernel of the Artin map. Equivalently,
$\left(\frac{L/K}{\mathfrak p}\right)^n=Id$. The automorphism $\left(\frac{L/K}{\mathfrak p}\right)$
acts on $\ro_L/\mathfrak B$ as $x\mapsto x^{Norm(\mathfrak p)}=x^p$, so
its $n$-th power acts as $x\mapsto x^{p^n}=x^q$. This means that the operation
$x\mapsto x^q$ acts trivially on $\ro_L/\mathfrak B$, which is possible only if $\ro_L/\mathfrak B\subset\F_q$.
\end{proof}

Using formulas from \cite[Proposition A.1.1]{silverman}, it is easy to check
that for $j\in\F_q$ or $j\in\C$ the following curves, defined over $\F_q$ or $\C$
respectively, have the $j$-invariant equal to $j$:
\begin{itemize}
\item if the field characteristic is 0 or greater than 3, when $j\ne0$, $j\ne1728$:
$y^2=x^3+3cx+2c$, where $c=\frac j{1728-j}$;
\item if the field characteristic is 0 or greater than 3, when $j=0$: $y^2=x^3+1$;
\item if the field characteristic is 0 or greater than 3, when $j=1728$: $y^2=x^3+x$;
\item over the field $\F_q$ of the characteristic 2, when $j\in\F_q^*$: $y^2+xy=x^3+j^{-1}$;
\item over the field $\F_q$ of the characteristic 3, when $j\in\F_q^*$: $y^2=x^3+x^2-j^{-1}$.
\end{itemize}
The missing cases with $j=0$ in characteristics 2 and 3 correspond to supersingular curves
(\cite[Exercise 5.7, Theorem 4.1]{silverman}), so they cannot arise.

For the fourth stage we must show that the curve $E''$ (which is constructed in the third stage)
has complex multiplication by $\ro_D$ (in particular, it is non-supersingular).

It follows from the construction of the curve $E''$ and from \eqref{el.hdreduction}
that the $j$-invariant of $E''$ equals $R_{\mathfrak B}(j(\mathfrak a))$,
where $\mathfrak a$ is some proper fractional $\ro_D$-ideal. Since $j(\mathfrak a)\in\ro_L$,
there exists (\cite[\S4.3]{deuring}) a finite extension $L'$ of $L$, a curve $E'$
defined over $L'$ and a prime ideal $\mathfrak B'\subset\ro_{L'}$ lying above $\mathfrak B$
such that $j(E')=j(\mathfrak a)$ and the equation of $E'$ reduced modulo $\mathfrak B'$
gives a non-singular curve (over a finite field).

Since $j(E')$ equals a singular value, $E'$ has complex multiplication by $\ro_D$.
Since $j$-invariant of the reduced curve equals the reduced $j$-invariant (because
$j$-invariant is a rational function in coefficients) and $\mathfrak B'\cap\ro_L=\mathfrak B$,
$j$-invariant of the curve $E'$ reduced modulo $\mathfrak B'$ equals
$R_{\mathfrak B'}(j(\mathfrak a))=R_{\mathfrak B}(j(\mathfrak a))=j(E'')$. Two elliptic curves
are isomorphic if and only if their $j$-invariants are equal (\cite[Proposition III.1.4]{silverman}),
therefore $E''$ is isomorphic to the reduction of $E'$. Finally, Lemma \ref{el.stupid} and
the properties of the reduction (\cite[Theorem 13.12]{lang}) imply that $E''$ is non-supersingular
and $\End(E'')\cong\End(E')\cong\ro_D$.

Therefore, the fourth stage starts with the curve $E''$ defined over $\F_q$, which has complex
multiplication by $\ro_D$. As shown above, the order of the curve $E''$ equals $q+1-u$,
where $4q=u^2+|D|v^2$ and $\gcd(q,u)=1$, but $u$, $v$ are not necessarily equal to $\hat u$, $\hat v$.
Let $\pi=\frac{u+v\sqrt D}2\in\ro_D\subset\ro$. Due to Lemma \ref{el.stupid} there is either
$\pi\ro=\mathfrak p^n$ or $\pi\ro=\overline{\mathfrak p}^n$. The same holds for $\hat\pi\ro$.
Thus, there is either $\pi\ro=\hat\pi\ro$ or $\pi\ro=\overline{\hat\pi}\ro$. Since the norm
of the $\ro_D$-ideal $\pi\ro_D$ equals $|Norm(\pi)|=q$ and is prime to the conductor $f$
of the order $\ro_D$, we have $\pi\ro_D=\pi\ro\cap\ro_D$ (\cite[Proposition 7.20]{cox}).
Similarly, we have $\hat\pi\ro_D=\hat\pi\ro\cap\ro_D$. Thus, there is either $\pi\ro_D=\hat\pi\ro_D$
or $\pi\ro_D=\overline{\hat\pi}\ro_D$. Equivalently, the number $\pi$ is associated either
with $\hat\pi$ or with $\overline{\hat\pi}$ in the ring $\ro_D$.

It is well known (e.g. \cite[Proposition 13.1.5]{ireland}) that the group of units in $\ro_D$
is $\{\pm1\}$ if $D\not\in\{-3,-4\}$, $\{\pm1,\pm\zeta_3,\pm\zeta_3^2\}$ if $D=-3$,
$\{\pm1,\pm i\}$ if $D=-4$. Here $\zeta_3=e^{2\pi i/3}=\frac{-1+\sqrt{-3}}2$.

If $D\not\in\{-3,-4\}$, we have $\pi=\pm\hat\pi$ or $\pi=\pm\overline{\hat\pi}$. This
corresponds to $u=\pm\hat u$. Therefore, in this case $|E''(\F_q)|=q+1\pm\hat u$.
If $|E''(\F_q)|=q+1-\hat u$, the curve $E''$ is the one we search for. Otherwise,
we construct the quadratic twist of $E''$ as follows. If $p\ne2$, the normal Weierstrass
form of the curve equation is $y^2=f(x)$, where $f$ is a polynomial of degree 3 with
the high-order coefficient equal to 1 (in particular, the formulas above give the equation
in this form), and the curve $y^2=c^3f(x/c)$, where $c$ is any quadratic non-residue
in $\F_q$, has the required order (\cite{LayZimmer}). If $p=2$, the normal form is
$y^2+xy=x^3+a_2x^2+a_6$ and the curve $y^2+xy=x^3+(a_2+\gamma)x^2+a_6$, where
$Tr_{\F_q/\F_2}\gamma=1$, has the required order (\cite{LayZimmer}).

If $D=-3$, the procedure for calculating $H_D[j]$ yields the polynomial $H_{-3}[j](x)=x$,
it has the only root $j=0$. The formula \eqref{el.dsquare} in this case is $\left(\frac{-3}p\right)=1$
and implies $p\equiv1\pmod3$, in particular, $p>3$. Any curve of the form $y^2=x^3+b$, $b\ne0$,
has the $j$-invariant equal to 0 (\cite[Proposition A.1.1]{silverman}), and all such curves
are $\overline\F_q$-isomorphic (because they have the same $j$-invariant).

Let $\chi$ be the unique multiplicative character on $\F_q$ of order 2. Let
$S_2(b)=\sum_{x\in\F_q}\chi(x^3+b)$. It is easy to see that the order of the curve $y^2=x^3+b$
is equal to $q+1+S_2(b)$. The equality $p\equiv1\pmod3$ implies $q\equiv1\pmod3$.
According to \cite{shrutka} (the article \cite{shrutka} considers only the case $q=p$,
where $\chi$ is the Legendre symbol, but the arguments can be trivially generalized),
there exist $k,l\in\Z$ such that for any cubic non-residue $c\in\F_q^*$ the equalities
$S_2(1)=2k$, $S_2(c^2)=-k\pm3l$, $S_2(c^{-2})=-k\mp3l$ and $q=k^2+3l^2$ hold. Moreover,
$S_2(b)=\chi(t)S_2(bt^3)$ for any $t\in\F_q^*$.

The curve $E''$ generated in the third stage is $y^2=x^3+1$; therefore, $u=-S_2(1)=-2k$,
$q=k^2+3l^2$, $l=\pm\frac{S_2(c^2)-S_2(c^{-2})}6$ for any cubic non-residue $c$ in $\F_q$.
Since $|\pi|^2=q$, it follows that $\pi=-k\pm l\sqrt{-3}$. Replacing $c$ to $c^{-1}$
if needed, we obtain $\pi=-k-l\sqrt{-3}$, $l=\frac{S_2(c^2)-S_2(c^{-2})}6$.

Either $\hat\pi$ or $\overline{\hat\pi}$ equals the product of $\pi$ and some unit of $\ro_{-3}$.
In both cases $\hat u=2\re\hat\pi$ equals twice the real part of the product of $\pi$ and
some unit. Thus, there are 6 possible variants for $\hat u$:
\begin{itemize}
\item $\hat u=2\re\hat\pi=\pm2\re\pi=\pm2k$. In this case we search for a curve of order
$q+1\pm2k$. One of curves $y^2=x^3+1$ and $y^2=x^3+g^3$, where $g$ is any quadratic non-residue
in $\F_q$, gives the answer.
\item $\hat u=2\re\hat\pi=\pm2\re(\zeta_3)\pi=\pm(k+3l)$. In this case we search for a curve
of order $q+1\pm(k+3l)$. One of curves $y^2=x^3+c^2$ and $y^2=x^3+c^2g^3$ gives the answer.
\item $\hat u=2\re\hat\pi=\pm2\re(\zeta_3^2\pi)=\pm(k-3l)$. In this case we search for a curve
of order $q+1\pm(k-3l)$. One of curves $y^2=x^3+c^{-2}$ and $y^2=x^3+c^{-2}g^3$ gives the answer.
\end{itemize}

If $D=-4$, we similarly have $H_{-4}[j](x)=x-1728$ with the only root $j=1728$.
The formula \eqref{el.dsquare} in this case is $\left(\frac{-4}p\right)=1$
and implies $p\equiv1\pmod4$, in particular, we still have $p>3$. Any curve of the form
$y^2=x^3+bx$, $b\ne0$, has the $j$-invariant equal to 1728 (\cite[Proposition A.1.1]{silverman}),
all such curves are $\overline\F_q$-isomorphic (because they have the same $j$-invariant).

Let $S_1(b)=\sum_{x\in\F_q}\chi(x)\chi(x^2+b)$, where $\chi$ is the unique multiplicative
character on $\F_q$ of order 2, as above. It is easy to see that the order of the curve
$y^2=x^3+bx$ equals $q+1+S_1(b)$. The equality $p\equiv1\pmod4$ implies $q\equiv1\pmod4$.
According to \cite{jacobstahl}, there exist $k,l\in\Z$ such that $k$ is odd,
$S_1(1)=2k$, $S_1(b)=\pm2l$ for any quadratic non-residue $b$ and $S_1(b)=\chi(t)S_1(bt^2)$
for any $t\in\F_q^*$.

The curve $E''$ generated in the third stage is $y^2=x^3+x$; therefore, $u=-S_1(1)=-2k$,
$q=k^2+l^2$. Since $|\pi|^2=q$, it follows that $\pi=-k\pm li$.

Similarly to the previous case, there are 4 possible variants for $\hat u$: $\pm2\re\pi=\pm2k$
and $\pm2\re(i\pi)=\pm2l$. If $\hat u=\pm2k$, one of curves $y^2=x^3+x$ and $y^2=x^3+g^2x$,
where $g$ is any quadratic non-residue in $\F_q$, has the required order. If $\hat u=\pm2l$,
one of curves $y^2=x^3+gx$ and $y^2=x^3+g^3x$ has the required order.

\subsection{Some known optimizations}\label{el.otherinv}
The coefficients of the polynomial $H_D[j]$ grow quite fast with $|D|$. For example,
$H_{-40}[j](x)=x^2-425692800x+9103145472000$. Consequently, it is useful
to search for another functions with singular values in $L$, which have a smaller
height of the characteristic polynomial.

Let $z\in\h$, $q=e^{2\pi iz}$. Let us introduce some functions following \cite{weber}:
$$
\eta(z)=q^{\frac1{24}}\prod_{n=1}^\infty(1-q^n)=q^{\frac1{24}}\sum_{n=-\infty}^\infty(-1)^n q^{\frac{3n^2+n}2},
$$
$$
\mathfrak f(z)=e^{-\frac{\pi i}{24}}\frac{\eta\left(\frac{z+1}2\right)}{\eta(z)},\quad
\mathfrak f_1(z)=\frac{\eta\left(\frac z2\right)}{\eta(z)},\quad
\mathfrak f_2(z)=\sqrt2\frac{\eta(2z)}{\eta(z)},
$$
\begin{equation}\label{el.gammadef}
\gamma_2(z)=\frac{\mathfrak f^{24}-16}{\mathfrak f^8}=\frac{\mathfrak f_1^{24}+16}{\mathfrak f_1^8}=\frac{\mathfrak f_2^{24}+16}{\mathfrak f_2^8},
\quad j(z)=\gamma_2(z)^3.
\end{equation}

Let $N$ be a natural number. We define a \textit{$N$-system} (following \cite{schertz}) as a set of
forms $(A_1,B_1,C_1)$, \dots, $(A_h,B_h,C_h)$ such that
\begin{itemize}
\item the set $\{\mathfrak h(A_i,B_i,C_i):1\le i\le h\}$ is the complete system of representatives
of the group $\mathcal H_D$,
\item the relations $$\gcd(A_i,N)=1;\quad B_i\equiv B_j\pmod{2N}$$ hold.
\end{itemize}
Note that for any form $(A_i,B_i,C_i)$ the congruence $B_i\equiv D\pmod2$ is true, so the first
condition implies that $B_i\equiv B_j\pmod 2$ for any $i,j$.

If a set of forms satisfying the first condition is known, it is easy to construct a $N$-system.
For example, the complete set of reduced forms can be used as a starting point. The corresponding
algorithm can be found in \cite[proof of Proposition 3]{schertz}. (We assume that the prime factorization
of $N$ is known.)
\begin{enumerate}
\item First, achieve the condition $\gcd(A_i,N)=1$ for all $i$.

Obviously, it is sufficient to solve the next task: achieve $\gcd(A_i,N_0l)=1$
assuming that $\gcd(A_i,N_0)=1$, where $l$ is the next prime divisor of $N$ not dividing $N_0$.

The number $l$ can not divide all of numbers $A_i$, $A_i+N_0B_i+N_0^2C_i$, $l^2A_i+lN_0B_i+N_0^2C_i$,
because otherwise the numbers $A_i$, $B_i$, $C_i$ would have the common divisor $l$ and the form
$(A_i,B_i,C_i)$ would not be primitive.
\begin{itemize}
\item Assume $l\nmid A_i$. Then the condition $\gcd(A_i,N_0l)$ already holds.
\item Assume $l\nmid A_i+N_0B_i+N_0^2C_i$. Change the variables $x=x'$, $y=N_0x'+y'$
and replace the current form with the new form (obviously, it is equivalent).
\item Assume $l\nmid l^2A_i+lN_0B_i+N_0^2C_i$. Find $a,b\in\Z$ such that $al-bN_0=1$,
change the variables $x=lx'+by'$, $y=N_0x'+ay'$ and replace the current form
with the new form (obviously, it is equivalent).
\end{itemize}
\item Next, achieve the condition $B_i\equiv B_1\pmod{2N}$ for all $i$.
The change of the variables $x=x'+ay'$, $y=y'$ transforms the form
$(A_i,B_i,C_i)$ to the equivalent form $(A_i,B_i+2aA_i,C_i+aB_i+a^2A_i)$.
Since $\gcd(A_i,N)=1$, it is sufficient to apply this transformation
with $a=A_i^{-1}\frac{B_1-B_i}2\bmod N$.
\end{enumerate}

\begin{theorem}\label{el.t1} (\cite[Theorem 1]{schertz}) Let $\alpha\in\h$ be the root of the form
$$(A,B,C),\quad 2\nmid A,\quad 32\mid B,$$
with the discriminant $B^2-4AC=D=-4m$, $m\in\N$. Let $g(\alpha)$ be defined by the
following formulas:
$$
\begin{aligned}%{r}
\displaystyle\left(\left(\frac2A\right)\frac1{\sqrt2}\mathfrak f(\alpha)^2\right)^3, \mbox{ if }m\equiv1\pmod8,\\
\displaystyle\mathfrak f(\alpha)^3, \mbox{ if }m\equiv3\pmod8,\\
\displaystyle\left(\frac12\mathfrak f(\alpha)^4\right)^3, \mbox{ if }m\equiv5\pmod8,\\
\displaystyle\left(\left(\frac2A\right)\frac1{\sqrt2}\mathfrak f(\alpha)\right)^3, \mbox{ if }m\equiv7\pmod8,\\
\displaystyle\left(\left(\frac2A\right)\frac1{\sqrt2}\mathfrak f_1(\alpha)^2\right)^3, \mbox{ if }m\equiv2\pmod4,\\
\displaystyle\left(\left(\frac2A\right)\frac1{2\sqrt2}\mathfrak f_1(\alpha)^4\right)^3, \mbox{ if }m\equiv4\pmod8.
\end{aligned}
$$
Then $g(\alpha)\in\ro_L$.

If $\alpha_1=\alpha,\dots,\alpha_h$ are roots of the elements of 16-system, the singular values $g(\alpha_i)$
form the complete set of different conjugates over $\Q$.
\end{theorem}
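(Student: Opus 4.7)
The plan is to follow Schertz's original argument, which splits into two largely independent parts: integrality of $g(\alpha)$ inside $L$, and identification of the $h$ values $g(\alpha_i)$ as a complete set of Galois conjugates.

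For the first part I would start from the fact, already cited in the excerpt, that $j(\alpha)\in\ro_L$. Let $\gamma_2(\alpha)$ be a cube root of $j(\alpha)$; relation \eqref{el.gammadef} says that $\mathfrak{f}(\alpha)^8$, $-\mathfrak{f}_1(\alpha)^8$, $-\mathfrak{f}_2(\alpha)^8$ are the three roots of the monic polynomial $X^3-\gamma_2(\alpha)X-16$, so they are integral over $\Z[\gamma_2(\alpha)]$ and hence over $\Z$. Combined with the classical Weber identity $\mathfrak{f}\,\mathfrak{f}_1\,\mathfrak{f}_2=\sqrt{2}$, this lets one check case-by-case that every expression displayed for $g(\alpha)$ is an algebraic integer: the factors $(2/A)$ and $1/\sqrt{2}$ appear precisely in the residue classes $m\equiv 1,7\pmod 8$ and $m\equiv 2\pmod 4$ where they are needed to absorb a factor of $2$ coming from $\mathfrak{f}(\alpha)^{24}$ or $\mathfrak{f}_1(\alpha)^{24}$. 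The subtler point is that $g(\alpha)$ lies in $L$ itself and not in a proper abelian extension. Here I would invoke Shimura's reciprocity law: the functions $\mathfrak{f}^{24},\mathfrak{f}_1^{24},\mathfrak{f}_2^{24}$ are modular for a congruence subgroup of level dividing $48$, and under Shimura's action the hypotheses $2\nmid A$ and $32\mid B$ are exactly those that force the relevant finite idele in $(\ro/16\ro)^*$ to act trivially on the corresponding singular value. The outer cube in the definition of $g$ pushes from the $\gamma_2$-level to the $j$-level, keeping everything inside $L$.

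For the second part, a $16$-system contains exactly $h=|\mathcal{H}_D|$ forms whose classes $\mathfrak{h}(A_i,B_i,C_i)$ exhaust $\mathcal{H}_D$, matching the order of $\Gal(L/K)$ through the isomorphism $\Omega$ from Section~\ref{method}. It therefore suffices to show (a) that the $g(\alpha_i)$ are Galois conjugate over $\Q$, and (b) that $K(g(\alpha_1))=L$. For (a), the Artin symbol attached to the $\ro$-ideal associated to the form $(A_i,B_i,C_i)$ sends $j(\alpha_1)$ to (an appropriate) $j(\alpha_i)$ by the formula cited for $\Omega$; applying Shimura reciprocity to $\mathfrak{f}^{24}$ (respectively $\mathfrak{f}_1^{24}$) and using the congruences $\gcd(A_i,16)=1$ and $B_i\equiv B_1\pmod{32}$ to trivialize the resulting finite idele, one obtains the analogous formula $\sigma_i(g(\alpha_1))=g(\alpha_i)$ with no spurious root-of-unity factor. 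The invariance under complex conjugation, needed to descend from $L$ to $\Q$-conjugacy, is handled by the same computation applied to $\overline{\mathfrak{a}}\sim\mathfrak{a}^{-1}$. For (b), $g(\alpha)$ determines $\mathfrak{f}(\alpha)^{24}$ (or $\mathfrak{f}_1(\alpha)^{24}$) up to a cube root of unity in $\ro_L$, which via \eqref{el.gammadef} determines $\gamma_2(\alpha)^3=j(\alpha)$, so $K(g(\alpha))\supseteq K(j(\alpha))=L$; combined with $g(\alpha)\in L$ from the first part this gives equality, hence distinctness of the $h$ conjugates.

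The main obstacle is the bookkeeping in the Shimura-reciprocity step: tracking $24$th- and $48$th-roots of unity through the transformation formulas for $\eta$ and $\mathfrak{f}_\bullet$ under $SL_2(\Z)$, and verifying in each of the six residue classes of $m\bmod 8$ that the precise normalization $\bigl((2/A)/\sqrt{2}\bigr)^3$, $\bigl(1/2\bigr)^3$, or $\bigl((2/A)/(2\sqrt{2})\bigr)^3$ in front of the cube of $\mathfrak{f}$ or $\mathfrak{f}_1$ is exactly the scalar that cancels the extra constant produced by the Weber transformation laws and the Artin action. This case analysis is the genuine content of Theorem~1 of \cite{schertz}; in the spirit of the excerpt I would cite it as a black box at the relevant places rather than redo the six-way computation from scratch.
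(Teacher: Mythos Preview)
The paper does not prove this theorem at all: it is stated with the attribution ``(\cite[Theorem 1]{schertz})'' and used as a black box, exactly as you yourself propose to do in your final paragraph. There is therefore no ``paper's own proof'' to compare your sketch against; the paper simply quotes the result and moves on.

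Your outline of how Schertz's argument goes is broadly reasonable as an orientation (integrality via the cubic $X^3-\gamma_2(\alpha)X-16$, membership in $L$ and the Galois-conjugacy statement via Shimura reciprocity and the $16$-system congruences), but since the paper offers nothing beyond the citation, the appropriate ``proof'' here is a one-line reference to \cite{schertz}. If you wish to include a sketch, be aware that the delicate part you flag---the root-of-unity bookkeeping in each residue class of $m\bmod 8$---is not something the paper addresses or expects you to reproduce, and your sketch as written does not actually carry it out either; you correctly identify it as the obstacle and then defer to the citation, which is exactly what the paper does from the outset.
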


\begin{theorem}\label{el.t2} (\cite[Theorem 2]{schertz}) Let $\alpha\in\h$ be the root of the form
$$(A,B,C),\quad 3\nmid A,\quad 3\mid B,$$
with the discriminant $B^2-4AC=D$. Then
$$
\Q(\gamma_2(\alpha))=\begin{cases}
\Q(j(\alpha)),&3\nmid D,\\
\Q(j(3\alpha)),&3\mid D.
\end{cases}
$$
Moreover, if $3\nmid D$ and $\alpha_1=\alpha,\dots,\alpha_h$ are roots of the elements of 3-system,
the singular values $\gamma_2(\alpha_i)$ form the complete set of different conjugates over $\Q$.
In addition, $\gamma_2(\alpha_i)$ are algebraic integers.
\end{theorem}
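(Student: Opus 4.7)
The plan is to use Shimura reciprocity (equivalently, the explicit transformation law of $\gamma_2$ under $SL_2(\Z)$) to pin down the Galois action on $\gamma_2(\alpha)$ and compare it with the action on $j(\alpha)$, respectively $j(3\alpha)$. Since $\gamma_2(\alpha)^3=j(\alpha)$, one immediately has $\Q(j(\alpha))\subseteq\Q(\gamma_2(\alpha))$ with index dividing $3$. To prove equality when $3\nmid D$, I would show that every $\sigma\in\Gal(\overline\Q/\Q(j(\alpha)))$ fixes $\gamma_2(\alpha)$: since such $\sigma$ must permute the three cube roots of $j(\alpha)$, we have $\sigma(\gamma_2(\alpha))=\zeta_3^{k(\sigma)}\gamma_2(\alpha)$ for some $k(\sigma)\in\Z/3\Z$, and it suffices to show $k(\sigma)=0$.

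To compute $k(\sigma)$, I would use that $\gamma_2$ is a modular function of level $3$ with $q$-expansion $\gamma_2(z)=q^{-1/3}(1+\dots)$ and an explicit transformation law $\gamma_2(gz)=\varepsilon(g)\gamma_2(z)$ for $g\in SL_2(\Z)$, where $\varepsilon$ is a cube-root-of-unity cocycle depending only on $g\bmod 3$ and derivable from the $\eta$-multiplier system underlying \eqref{el.gammadef}. Shimura reciprocity then translates the Galois action of $\sigma$ on $\gamma_2(\alpha)$ into the $\varepsilon$-value of a matrix acting on the lattice $\langle1,\alpha\rangle_\Z$. The hypotheses $3\nmid A$ and $3\mid B$, together with $3\nmid D$, force all such matrices to lie in $\Ker\varepsilon$, giving $k(\sigma)=0$ and hence $\gamma_2(\alpha)\in\Q(j(\alpha))$. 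For $3\mid D$ this last step degenerates; instead one uses that the sublattice $\langle1,3\alpha\rangle_\Z$ has index $3$, so $3\alpha$ corresponds to an ideal of the order $\ro_{9D}$, and either a direct comparison of $q$-expansions or the classical identity expressing $\gamma_2(z)$ in terms of $j(z)$ and $j(3z)$ shows $\gamma_2(\alpha)\in\Q(j(3\alpha))$, with equality again by the same cocycle computation.

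For the $3$-system assertion, running $\sigma$ over $\Gal(L/\Q)$ produces the full Galois orbit of $\gamma_2(\alpha_1)$, and the uniform congruence $B_i\equiv B_j\pmod 6$ built into a $3$-system ensures that this orbit is realized by the $h$ values $\gamma_2(\alpha_i)$ and that they are pairwise distinct. Algebraic integrality is immediate from $\gamma_2(\alpha_i)^3=j(\alpha_i)\in\ro_L$: each $\gamma_2(\alpha_i)$ satisfies the monic integer-coefficient equation $x^3-j(\alpha_i)=0$ over $\ro_L$, and $j(\alpha_i)$ is already an algebraic integer.

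The main obstacle is the explicit evaluation of the level-$3$ cocycle $\varepsilon$ on the matrices produced by Shimura reciprocity. A sign or exponent slip in the $\eta$-transformation law changes which cube root of $j(\alpha)$ is selected, and the hypotheses $3\nmid A$, $3\mid B$ were arranged precisely to kill the nontrivial contributions; keeping this bookkeeping exactly right, as Schertz does via the $\eta$-multiplier system at level $3$, is the technical heart of the argument.
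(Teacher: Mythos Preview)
The paper does not supply its own proof of this theorem; it is quoted with the attribution \cite[Theorem 2]{schertz} and used as a black box. Your sketch is an accurate outline of Schertz's argument: he works with the explicit transformation law of $\gamma_2$ under $SL_2(\Z)$ (computed from the $\eta$-multiplier system underlying \eqref{el.gammadef}), and the $N$-system conditions $3\nmid A_i$, $3\mid B_i$, $B_i\equiv B_j\pmod 6$ are exactly what force the level-$3$ cocycle to be trivial on the relevant matrices, so that the same cube root of $j(\alpha_i)$ is selected coherently across the system. Your integrality argument via the monic equation $x^3-j(\alpha_i)=0$ over $\ro_L$ is correct and is the standard one.

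One small looseness: in the $3\mid D$ case you offer two routes, a ``direct comparison of $q$-expansions'' and a ``classical identity expressing $\gamma_2(z)$ in terms of $j(z)$ and $j(3z)$''. The second does not exist in the form you suggest (there is no algebraic relation giving $\gamma_2$ rationally in $j(z)$ and $j(3z)$ alone); what actually happens is that one passes to the order $\ro_{9D}$, as you correctly note via the index-$3$ sublattice $\langle 1,3\alpha\rangle_\Z$, and repeats the cocycle analysis over its ring class field. This is a minor point of phrasing rather than a genuine gap, and since the present paper only uses the case $3\nmid D$ (cf.\ the discussion after Statement~\ref{el.galaction}), it does not affect anything downstream.
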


Let $p_1$, $p_2$ be prime numbers. Following \cite{doubleeta}, we introduce the function
$$
\mathfrak m_{p_1,p_2}(z)=\frac{\eta\left(\frac z{p_1}\right)\eta\left(\frac z{p_2}\right)}{\eta(z)\eta\left(\frac z{p_1p_2}\right)}
$$
and define $s=\frac{24}{\gcd(24,(p_1-1)(p_2-1))}$.

\begin{theorem}\label{el.t3} (\cite[Theorems 3.2, 3.3, Corollary 3.1]{doubleeta})
Let $D$ satisfy \eqref{el.dbase}, $N=p_1p_2$, $p_1$ and $p_2$ are primes such that
\begin{enumerate}
\item[] 1) $\left(\frac D{p_1}\right),\left(\frac{D}{p_2}\right)\ne-1$ if $p_1\ne p_2$;
\item[] either 2a) $\left(\frac Dp\right)=1$ if $p_1=p_2=p$, or 2b) $p|f$ if $p_1=p_2=p$.
\end{enumerate}
Then there exists a form $(A_1,B_1,C_1)$ such that $\gcd(A_1,N)=1$ and $N\mid C_1$.
Let $\alpha_1\in\h$ be the root of this form. The singular value $\mathfrak m_{p_1,p_2}^s(\alpha_1)$ lies in $L$.
All conjugates over $K$ to $\mathfrak m_{p_1,p_2}^s(\alpha_1)$ are $\mathfrak m_{p_1,p_2}^s(\alpha_i)$,
where $\alpha_i$ are roots of elements of $N$-system. The numbers $\mathfrak m_{p_1,p_2}^s(\alpha_i)$
are algebraic integers.

If one of conditions 1) and 2a) holds, the numbers $\mathfrak m_{p_1,p_2}^s(\alpha_i)$ are units
(i.e. the numbers $\mathfrak m_{p_1,p_2}^{-s}(\alpha_i)$ are algebraic integers too).

If primes $p_1$ and $p_2$ satisfy the stronger condition:
\begin{itemize}
\item $\left(\frac D{p_1}\right), \left(\frac D{p_2}\right)\ne-1$ and $p_1,p_2\nmid f$ when $p_1\ne p_2$;
\item $\left(\frac Dp\right)=1$ or $p\mid f$ when $p_1=p_2=p\ne2$;
\item either $\left(\frac D2\right)=1$, or $2\mid f$ and $D\not\equiv4\pmod{32}$ when $p_1=p_2=2$,
\end{itemize}
then the complex conjugation rearranges $\mathfrak m_{p_1,p_2}^s(\alpha_i)$.
\end{theorem}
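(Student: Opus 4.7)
The plan is to view $\mathfrak m_{p_1,p_2}^s$ as a modular function on $X_0(N)$ with $N=p_1p_2$ and to apply Shimura's reciprocity law to its singular values at CM points of level $N$. The exponent $s=24/\gcd(24,(p_1-1)(p_2-1))$ is dictated by Newman's criterion for $\eta$-quotients: it is the least power of $\mathfrak m_{p_1,p_2}$ that is actually fixed (not merely stabilized up to a character) by $\Gamma_0(N)$, so $\mathfrak m_{p_1,p_2}^s$ is a genuine modular function for $\Gamma_0(N)$ whose $q$-expansions at all cusps have coefficients in $\Z$.

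For existence of a form with $\gcd(A_1,N)=1$ and $N\mid C_1$, I would note that the condition $N\mid C_1$ combined with $B_1^2-4A_1C_1=D$ amounts to solving $B_1^2\equiv D\pmod{4N}$, a congruence to which Hensel's lemma reduces the problem to the individual primes dividing $N$; hypotheses 1), 2a), 2b) are precisely what is needed for solvability at each of those primes. Once $B_1$ is fixed, an $A_1$ coprime to $N$ is produced by the same recipe used to construct $N$-systems after Theorem \ref{el.t2}. Algebraic integrality of $\mathfrak m_{p_1,p_2}^s(\alpha_i)$ then follows from the standard fact that a modular function of level $N$ whose $q$-expansion at $i\infty$ lies in $\Z[[q]]$ and whose poles on $X_0(N)$ are all cuspidal takes algebraic integer values at CM points.

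The membership in $L$ together with the description of conjugates is the heart of the theorem. Since $N\mid C_1$, the lattice $\langle 1,\alpha_1\rangle_\Z$ has a canonical cyclic sublattice $\langle 1,N\alpha_1\rangle_\Z$ of index $N$ stable under $\ro_D$, so $\alpha_1$ carries a $\Gamma_0(N)$-level structure. Shimura reciprocity, specialized to the ring class field of $\ro_D$, then places $\mathfrak m_{p_1,p_2}^s(\alpha_1)$ in $L$ and identifies the action of $\Omega(\mathfrak h(A_i,B_i,C_i))^{-1}\in\Gal(L/K)$ as sending it to $\mathfrak m_{p_1,p_2}^s(\alpha_i)$. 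The $N$-system conditions $\gcd(A_i,N)=1$ and $B_i\equiv B_1\pmod{2N}$ are exactly what keeps this cyclic-$N$ substructure coherent under the Galois action, and since the $N$-system has $h$ elements these account for all conjugates over $K$.

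The main obstacle will be the unit and complex-conjugation assertions. For the units: the zeros and poles of $\mathfrak m_{p_1,p_2}^s$ on $X_0(N)$ lie only at the four cusps above $\infty,0,1/p_1,1/p_2$, so one must show the CM points $\alpha_i$ never reduce to any of these cusps modulo a prime of $\ro_L$; the stronger Kronecker-symbol condition $\left(\frac D{p_i}\right)\ne -1$ (or $p_i\mid f$) is precisely what prevents cuspidal reduction at primes above $p_1$ and $p_2$, which is the only place where it could occur. For complex conjugation, one has $\overline{\mathfrak m_{p_1,p_2}^s(\alpha)}=\mathfrak m_{p_1,p_2}^s(-\overline\alpha)$, and $-\overline\alpha$ is the root of the form $(A,-B,C)$ whose ideal class is inverse to that of $(A,B,C)$; a transformation in $\Gamma_0(N)$ must then be used to bring $(A,-B,C)$ back to an $N$-system element. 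The stronger hypotheses on $p_1,p_2$ are what guarantee that this transformation exists inside $\Gamma_0(N)$ itself rather than requiring an outer automorphism of $X_0(N)$ such as the Fricke involution, and so deliver a clean permutation of the $\mathfrak m_{p_1,p_2}^s(\alpha_i)$.
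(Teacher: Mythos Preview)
The paper does not contain a proof of this theorem at all: it is stated with an explicit attribution ``\cite[Theorems 3.2, 3.3, Corollary 3.1]{doubleeta}'' and no argument follows. So there is no ``paper's own proof'' to compare your proposal against; the result is quoted from Enge--Schertz.

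That said, your outline is in the right spirit and broadly matches how the cited source proceeds. Enge--Schertz do indeed use that $\mathfrak m_{p_1,p_2}^s$ is a modular function for $\Gamma_0(N)$ (the exponent $s$ coming from the transformation behaviour of the $\eta$-quotient), and they invoke Schertz's explicit version of Shimura reciprocity (the same machinery behind \cite[Theorem~7]{schertz}, which this paper restates as Statement~\ref{el.galaction}) to put the singular values in $L$ and describe the Galois action via an $N$-system. The existence of a form with $N\mid C_1$ is handled exactly by the congruence $B_1^2\equiv D\pmod{4N}$ you name, and integrality comes from integral $q$-expansions. The unit assertion in \cite{doubleeta} is obtained not by a cuspidal-reduction argument but by exhibiting the product of all conjugates (the norm) as an explicit power of a resultant of class polynomials, which is $\pm1$ under conditions 1) or 2a); your geometric picture is morally equivalent but would need more work to make rigorous. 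Likewise the complex-conjugation claim is proved there by an explicit $\Gamma_0(N)$-matrix computation rather than the high-level description you give. So your plan is sound as a sketch, but since the paper itself simply cites the result, there is nothing further to reconcile.
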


We need more precise statements for the following. The formulations of theorems \ref{el.t1}, \ref{el.t2},
\ref{el.t3} do not give the full information regarding the action of $\Gal(L/K)$ on singular values.
However, the proofs from the articles \cite{schertz} and \cite{doubleeta} contain this information.

\begin{statement}\label{el.galaction} (\cite[Theorem 7]{schertz}) Let $\theta$ be the function
from one of theorems \ref{el.t1}, \ref{el.t2}, \ref{el.t3} (in the last one the weak condition
on $p_1$, $p_2$ is sufficient). Let $\mathfrak a$, $\mathfrak b$ be two elements of the $N$-system
from the same theorem. Let $\alpha$ be the root of $\mathfrak a$, $\beta$ be the root of $\mathfrak b$,
$\Omega:\mathcal H_D\to\Gal(L/K)$ be the canonical isomorphism. Then
$$
\theta(\alpha)^{\Omega(\mathfrak h(\mathfrak a)\mathfrak h(\mathfrak b)^{-1})}=\theta(\beta).
$$
\end{statement}
This formula holds for $\theta=j$ too, as mentioned above.

It is more convenient to use Statement \ref{el.galaction} in the form of a formula
which specifies the action of a given automorphism from $\Gal(L/K)$ on a given singular
value. We remind that $\mathfrak h$ is surjective and $N$-system contains representatives
of all classes in $\mathcal H_D$.

\begin{corollary}\label{el.galaction.conv}
Let $\theta$, $N$-system and $\mathfrak a$ be the same as in Statement \ref{el.galaction}.
Let $\mathfrak c\in\mathcal H_D$. Then there exists a form $\mathfrak b$ from the $N$-system such that
$$
\mathfrak h(\mathfrak b)=\mathfrak h(\mathfrak a)\mathfrak c^{-1}.
$$
If $\beta$ is the root of $\mathfrak b$, then
\begin{equation}\label{el.galaction2}
\theta(\alpha)^{\Omega(\mathfrak c)}=\theta(\beta).
\end{equation}
\end{corollary}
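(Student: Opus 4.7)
The plan is to derive this as a direct application of Statement \ref{el.galaction} by rearranging the identity $\mathfrak h(\mathfrak a)\mathfrak h(\mathfrak b)^{-1}$ there so that it equals a prescribed $\mathfrak c$. There is essentially no hard analysis here: the work is packaged into Statement \ref{el.galaction}, and we merely need to verify that a suitable $\mathfrak b$ exists in the $N$-system and to perform a one-line identification in the group $\mathcal H_D$.

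First I would establish existence of $\mathfrak b$. As already noted just before the corollary, $\mathfrak h$ is a bijection between reduced forms and $\mathcal H_D$, and every $N$-system contains exactly one form from each ideal class (this is the defining property: $\{\mathfrak h(A_i,B_i,C_i):1\le i\le h\}$ is a complete system of representatives of $\mathcal H_D$). Hence for the specific element $\mathfrak h(\mathfrak a)\mathfrak c^{-1}\in\mathcal H_D$ there is a unique form $\mathfrak b$ in the chosen $N$-system with $\mathfrak h(\mathfrak b)=\mathfrak h(\mathfrak a)\mathfrak c^{-1}$.

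Next I would apply Statement \ref{el.galaction} to the pair $(\mathfrak a,\mathfrak b)$, both drawn from the same $N$-system; this is legitimate since the hypotheses on $\theta$ are exactly those of the statement. This yields
\[
\theta(\alpha)^{\Omega(\mathfrak h(\mathfrak a)\mathfrak h(\mathfrak b)^{-1})}=\theta(\beta).
\]
Finally, substituting $\mathfrak h(\mathfrak b)=\mathfrak h(\mathfrak a)\mathfrak c^{-1}$, the exponent simplifies in the abelian group $\mathcal H_D$ to
\[
\mathfrak h(\mathfrak a)\mathfrak h(\mathfrak b)^{-1}=\mathfrak h(\mathfrak a)\cdot\mathfrak c\cdot\mathfrak h(\mathfrak a)^{-1}=\mathfrak c,
\]
so that $\theta(\alpha)^{\Omega(\mathfrak c)}=\theta(\beta)$, which is exactly \eqref{el.galaction2}.

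The only conceptual point worth stressing is that the argument relies on $\mathcal H_D$ being abelian (so that conjugation by $\mathfrak h(\mathfrak a)$ is trivial) and on the $N$-system being a \emph{complete} set of representatives (so that $\mathfrak b$ is guaranteed to exist within it); both are built into the setup. No further computation is needed, and no genuine obstacle arises.
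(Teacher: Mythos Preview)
Your proposal is correct and matches the paper's approach exactly: the paper does not give a separate proof of the corollary but simply remarks beforehand that $\mathfrak h$ is surjective and that the $N$-system contains representatives of all classes in $\mathcal H_D$, which is precisely the justification you spell out. Your explicit computation of $\mathfrak h(\mathfrak a)\mathfrak h(\mathfrak b)^{-1}=\mathfrak c$ via commutativity of $\mathcal H_D$ makes the deduction from Statement~\ref{el.galaction} fully transparent.
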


Further for the function $\mathfrak m_{p_1,p_2}^s$ we assume that $p_1$ and $p_2$
satisfy the strong condition of the theorem; it is easy to see that such primes
can be found for any $D$.

Theorems \ref{el.t1} and \ref{el.t2} can be joined: if the discriminant $D$
and the form $(A,B,C)$ satisfy the assumptions of Theorem \ref{el.t1}
and also $3\nmid D$, $3\nmid A$, $3\mid B$, then the function $g(\alpha)$
can be defined without the exponent $3$ and the consequence of Theorem \ref{el.t1}
still holds. For example, let us consider the case $m\equiv3\pmod8$.
According to \eqref{el.gammadef},
\begin{equation}\label{el.t1plust2}
\mathfrak f(\alpha)=\frac{\left(\mathfrak f(\alpha)^3\right)^3\gamma_2(\alpha)}
{\left(\mathfrak f(\alpha)^3\right)^8-16}.
\end{equation}
Since $\mathfrak f(\alpha)^3\in L$ and $\gamma_2(\alpha)\in L$,
we have $\mathfrak f(\alpha)\in L$. Statement \ref{el.galaction} now implies
that any automorphism from $\Gal(L/K)$ maps $\mathfrak f(\alpha)^3$ to
$\mathfrak f(\alpha')^3$ and $\gamma_2(\alpha)$ to $\gamma_2(\alpha')$,
where $\alpha'$ depends only on the automorphism; \eqref{el.t1plust2}
implies that $\mathfrak f(\alpha)$ is mapped to $\mathfrak f(\alpha')$.
Finally, $\mathfrak f(\alpha)$ is an algebraic integer e.g. as a cubic root
from $\mathfrak f(\alpha)^3$ which is an algebraic integer due to Theorem \ref{el.t1}.
In other cases formulas are slightly more complicated, but the reasoning is the same.

Let $\theta$ and $\alpha_*=\{\alpha_1,\dots,\alpha_h\}$ be the function and the set
of roots from one of theorems \ref{el.t1}--\ref{el.t3}. Let us consider
the polynomial in one variable
$$
H_D[\theta,\alpha_*](x)=\prod_{i=1}^h(x-\theta(\alpha_i)).
$$
This polynomial has integer coefficients. For functions from Theorems \ref{el.t1}
and \ref{el.t2} this follows directly from the consequence of theorem.
For $\theta=\mathfrak m_{p_1,p_2}^s$ it is easy to see from Statement
\ref{el.galaction} that $H_D[\theta,\alpha_*]$ is invariant under $\Gal(L/K)$
and therefore is in $K[x]$ and it remains to apply Theorem \ref{el.t3}.

For example, $H_{-40}[\gamma_2,\alpha_*](x) = x^2-780x+20880$,
$H_{-40}[g,\alpha_*](x)=x^2-x-1$ with
$g(\alpha)=\left(\frac2A\right)\frac1{\sqrt2}\mathfrak f_1(\alpha)^2$,
$H_{-40}[\mathfrak m_{5,7},\alpha_*](x)=x^2-x-1$,
$H_{-40}[\mathfrak m_{11,13},\alpha_*](x)=
x^2\pm2x+1$. (The choice of $\alpha_*$ does not affect the polynomial
in first three cases; there are two variants for the polynomial depending
on $\alpha_*$ in the last case.) The last example shows that the values
of $\mathfrak m_{p_1,p_2}^s(\alpha_i)$ can coincide, so in the general case
$H_D[\mathfrak m_{p_1,p_2}^s,\alpha_*]$ is some power of the minimal
polynomial.

Since the polynomial $H_D[\theta,\alpha_*]$ has integer coefficients similar to
$H_D[j]$, it also can be calculated by calculating sufficiently accurate
approximations to the singular values $\theta(\alpha_i)$, multiplying factors
$x-\theta(\alpha_i)$ and rounding coefficients to integers. Since $\theta(\alpha_i)\in\ro_L$,
it has a representative in $\ro_L/\mathfrak B\subset\F_q$ (Theorem \ref{el.subsetinq}),
so the reduction of $H_D[\theta,\alpha_*](x)$ modulo $p$ splits into
linear factors in $\F_q$. It remains to calculate the $j$-invariant
by the reduction of $\theta(\alpha_i)$ in $\F_q$. The formulas \eqref{el.gammadef}
give the answer for $\theta=\gamma_2$ and $\theta$ being a power of $\mathfrak f$
from Theorem \ref{el.t1}. The situation for $\theta=\mathfrak m_{p_1,p_2}^s$
is more complicated. There exists the polynomial $\Phi_{p_1,p_2}(x,y)\in\Z[x,y]$
such that the identity $\Phi_{p_1,p_2}(\mathfrak m_{p_1,p_2}^s(z),j(z))=0$ holds
(\cite{doubleeta}). Substituting $z=\alpha_i$ and reducing modulo $\mathfrak B$
(since $\mathfrak B\cap\Z=p\Z$, it is sufficient to reduce $\Phi_{p_1,p_2}$ modulo $p$),
we obtain a polynomial equation for the required $j$-invariant. Solving this equation
gives several variants for the $j$-invariant. The correct one can be selected e.g.
as follows: construct an elliptic curve (and its quadratic twist) for every variant
and check whether its order equals $q+1-\hat u$. For example, cryptographic applications
require that $q+1-\hat u$ has a large prime divisor; in this case a simple test
$(q+1-\hat u)P=0$ for a random point $P$ is good for eliminating wrong candidates.
Note that the right order does not guarantee that the endomorphism ring is precisely $\ro_D$,
but such a subtle difference is usually not important; more detailed discussion can
be found in \cite{doubleeta}.

\section{Properties of the isomorphism $\Omega$}\label{omegaprop}
We recall that the group $\mathcal H_D$ is the factorgroup of the group
$I(\ro_D)$ of proper fractional $\ro_D$-ideals by the subgroup $P(\ro_D)$
of principal ideals.

An $\ro_D$-ideal $\mathfrak a$ is \textit{prime to} $f$ when
$\mathfrak a+f\ro_D=\ro_D$. This is equivalent to $\gcd(Norm(\mathfrak a),f)=1$,
and every ideal prime to the conductor is proper (\cite[Lemma 7.18]{cox}). Let $I(\ro_D,f)$ denote
the subgroup in $I(\ro_D)$ generated by ideals prime to $f$.
Let $P(\ro_D,f)$ denote the subgroup in $P(\ro_D)$ generated by principal ideals
$\alpha\ro_D$ with $\gcd(Norm(\alpha),f)=1$. The inclusion
$I(\ro_D,f)\subset I(\ro_D)$ induces an isomorphism $I(\ro_D,f)/P(\ro_D,f)\cong\mathcal H_D$
(\cite[Proposition 7.19]{cox}).

An $\ro$-ideal $\mathfrak a$ is prime to $f$ if and only if $\gcd(Norm(\mathfrak a),f)=1$
(\cite[Lemma 7.18]{cox}). Let $I(\ro,f)$ denote the subgroup of fractional $\ro$-ideals
generated by ideals prime to $f$. We recall that $P_{K,\Z}(f)$ denotes the subgroup
of $\ro$-ideals generated by principal ideals of the form $\alpha\ro$ with
$\alpha\in\ro$, $\alpha\equiv a\pmod{f\ro}$ for some $a\in\Z$, $\gcd(a,f)=1$.
The map $\Omega_1:\mathfrak a\mapsto\mathfrak a\ro$ gives a group isomorphism
$I(\ro_D,f)\to I(\ro,f)$ which preserves the norm (\cite[Proposition 7.20]{cox}).
In addition (\cite[Proposition 7.22]{cox}), $\Omega_1$ induces an isomorphism
$I(\ro_D,f)/P(\ro_D,f)\cong I(\ro,f)/P_{K,\Z}(f)$.

Thus, we have an isomorphism $\Omega_2:\mathcal H_D\to I(\ro,f)/P_{K,\Z}(f)$.
The Artin map $I(\ro,f)\to\Gal(L/K)$ (denoted as $\left(\frac{L/K}{\cdot}\right)$)
induces an isomorphism $I(\ro,f)/P_{K,\Z}(f)\to\Gal(L/K)$. The composition of
the last isomorphism with $\Omega_2$ is the canonical isomorphism $\Omega$
referenced in Statement \ref{el.galaction} (\cite[\S9]{cox}).

Let us sum up the above maps. There exists a commutative diagram
\begin{center}
\hfill
%\parbox{330pt}{\begin{picture}(330,60)
%\put(0,50){\makebox[31pt]{$I(\ro_D)$}}
%\put(15,45){\vector(0,-1){15}}
%%\put(35,43){\vector(1,0){20}}
%\put(30,50){\makebox[30pt]{$\supset$}}
%\put(60,50){\makebox[90pt]{$I(\ro_D,f)$}}
%\put(105,45){\vector(0,-1){15}}
%\put(140,53){\vector(1,0){45}}
%\put(150,57){\makebox[30pt]{$\scriptstyle\Omega_1$}}
%\put(180,50){\makebox[75pt]{$I(\ro,f)$}}
%\put(217,45){\vector(0,-1){15}}
%\put(0,20){\makebox[31pt]{$\mathcal H_D$}}
%\put(35,23){\vector(1,0){20}}
%\put(60,20){\makebox[90pt]{$I(\ro_D,f)/P(\ro_D,f)$}}
%\put(155,23){\vector(1,0){20}}
%\put(180,20){\makebox[75pt]{$I(\ro,f)/P_{K,\Z}(f)$}}
%\put(260,23){\vector(1,0){20}}
%\put(235,45){\vector(3,-1){45}}
%\put(255,42){$\scriptstyle\left(\frac{L/K}{\cdot}\right)$}
%\put(285,20){$\Gal(L/K)$}
%\qbezier(30,15)(105,0)(180,15)
%\put(180,15){\vector(4,1){0}}
%\put(105,10){$\scriptstyle\Omega_2$}
%\qbezier(25,15)(152,-15)(280,15)
%\put(280,15){\vector(4,1){0}}
%\put(152,2){$\scriptstyle\Omega$}
%\end{picture}}
\parbox{420pt}{\begin{picture}(420,70)
\put(0,60){\makebox[42pt]{\hbox to 42pt{\hfil $I(\ro_D)$ \hfil}}}
\put(20,55){\vector(0,-1){11}}
%\put(35,43){\vector(1,0){20}}
\put(45,60){\makebox[20pt]{\hbox to 20pt{\hfil $\supset$ \hfil}}}
\put(65,60){\makebox[125pt]{\hbox to 125pt{\hfil $I(\ro_D,f)$ \hfil}}}
\put(125,55){\vector(0,-1){11}}
\put(170,63){\vector(1,0){55}}
\put(185,67){\makebox[30pt]{\hbox to 30pt{\hfil $\scriptstyle\Omega_1$ \hfil}}}
\put(215,60){\makebox[105pt]{\hbox to 105pt{\hfil $I(\ro,f)$ \hfil}}}
\put(262,55){\vector(0,-1){11}}
\put(0,30){\makebox[42pt]{\hbox to 42pt{\hfil $\mathcal H_D$ \hfil}}}
\put(40,33){\vector(1,0){20}}
\put(65,30){\makebox[125pt]{\hbox to 125pt{\hfil $I(\ro_D,f)/P(\ro_D,f)$ \hfil}}}
\put(190,33){\vector(1,0){20}}
\put(215,30){\makebox[105pt]{\hbox to 105pt{\hfil $I(\ro,f)/P_{K,\Z}(f)$ \hfil}}}
\put(320,33){\vector(1,0){20}}
\put(300,55){\vector(3,-1){45}}
\put(320,52){$\scriptstyle\left(\frac{L/K}{\cdot}\right)$}
\put(345,30){$\Gal(L/K)$}
\qbezier(30,25)(120,0)(215,25)
\put(215,25){\vector(4,1){0}}
\put(120,15){$\scriptstyle\Omega_2$}
\qbezier(25,25)(185,-15)(345,25)
\put(345,25){\vector(4,1){0}}
\put(185,7){$\scriptstyle\Omega$}
\end{picture}}
\hfill
\refstepcounter{equation}\label{el.omegadef}\llap{(\theequation)}
\end{center}
where vertical arrows denote projections of a group to its factorgroup
and horizontal arrows in the second line are isomorphisms.

\begin{theorem}\label{el.artinaction}
Let $(A,B,C)$ be a form with $\gcd(A,D)=1$. Let $q\mid D$ be an integer satisfying
one of the conditions:
\begin{itemize}
\item $|q|$ is an odd prime, $q\equiv1\pmod4$; or
\item $q\in\{-4,\pm8\}$, $\frac Dq\equiv0\pmod4$ or $\frac Dq\equiv1\pmod4$.
\end{itemize}
Then
\begin{enumerate}
\item $\sqrt q\in L$.
\item $\mathfrak a=\langle A,\frac{-B+\sqrt D}2\rangle_\Z\in I(\ro_D,f)$, $Norm(\mathfrak a)=A$.
\item $$\left(\frac{L/K}{\Omega_1(\mathfrak a)}\right)(\sqrt q)=\left(\frac qA\right)\sqrt q.$$
\end{enumerate}
\end{theorem}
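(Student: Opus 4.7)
My plan is to dispatch the three assertions in order of increasing difficulty. Part 2 is the quickest: the form $(A,B,C)$ has root $\tau=\frac{-B+\sqrt D}{2A}$, and \cite[Theorem 7.7]{cox} identifies $\langle 1,\tau\rangle_\Z$ as a proper fractional $\ro_D$-ideal; scaling by $A$ gives $\mathfrak a$, which sits inside $\ro_D$ because $B\equiv D\pmod 2$ puts $\frac{-B+\sqrt D}{2}\in\ro_D$. Writing $\ro_D=\Z\oplus\Z\omega$ with $\omega=\frac{-B+\sqrt D}{2}$, one reads off $\ro_D/\mathfrak a\cong\Z/A\Z$, so $Norm(\mathfrak a)=A$. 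Since $f^2\mid D$ and $\gcd(A,D)=1$ force $\gcd(A,f)=1$, the ideal $\mathfrak a$ is prime to the conductor and thus lies in $I(\ro_D,f)$.

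For Part 1, I would prove $K(\sqrt q)\subset L$ by a class-field-theoretic conductor computation, splitting on whether $|q|$ divides the field discriminant $d$ or only the conductor $f$. If $|q|\mid d$, then $q$ is a prime discriminant of $K$ and $\sqrt q$ lies in the genus field of $K$, which sits inside the Hilbert class field of $\ro$ and hence inside $L$. If $|q|\mid f$ but $|q|\nmid d$, I would verify that the conductor of $K(\sqrt q)/K$ divides $f\ro$ by showing that for every $\alpha\in\ro$ with $\alpha\equiv a\pmod{f\ro}$ and $\gcd(a,f)=1$, compatibility of the Artin map together with $N_{K/\Q}(\alpha)\equiv a^2\pmod f$ gives $\left(\frac{K(\sqrt q)/K}{\alpha\ro}\right)(\sqrt q)=\left(\frac{q}{a^2}\right)\sqrt q=\sqrt q$, using that the Kronecker character $\left(\frac q\cdot\right)$ has conductor $|q|$ dividing $f$. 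The cases $q\in\{-4,\pm 8\}$ are the analogous local story at $p=2$, where the conditions on $D/q\bmod 4$ provide the needed ramification control.

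Part 3 then follows, once $\sqrt q\in L$ is known, from the standard compatibility of Artin symbols applied to the tower $\Q\subset K\subset L$ with the intermediate abelian-over-$\Q$ extension $\Q(\sqrt q)\subset L$:
\[\left(\frac{L/K}{\mathfrak a\ro}\right)\bigg|_{\Q(\sqrt q)}=\left(\frac{\Q(\sqrt q)/\Q}{N_{K/\Q}(\mathfrak a\ro)}\right).\]
Because $\Omega_1$ preserves norms (\cite[Proposition 7.20]{cox}), $N_{K/\Q}(\mathfrak a\ro)=A\Z$; and the classical Artin map for $\Q(\sqrt q)/\Q$ sends $\sqrt q\mapsto\left(\frac{q}{A}\right)\sqrt q$ by Kronecker--Weber and multiplicativity of the Kronecker symbol in the lower argument, all involved symbols being nonzero since $\gcd(A,D)=1$ and $q\mid D$. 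The main obstacle is Part 1 in the case $|q|\mid f$, $|q|\nmid d$, where the conductor computation must be done by hand (or, equivalently, one invokes a refined genus theory for non-maximal orders); the remaining pieces reduce to lattice bookkeeping and standard compatibility of Artin symbols in towers.
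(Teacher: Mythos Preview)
Your argument is correct; Parts 1 and 2 are handled essentially as in the paper (for Part 1 the paper simply cites \cite[Theorem 2.2.23 and (2.2.8)]{tsss}, whereas you sketch the underlying conductor computation, correctly flagging the case $|q|\mid f$, $|q|\nmid d$ as the nontrivial one). The genuine divergence is in Part~3. The paper proceeds elementarily: it factors $\Omega_1(\mathfrak a)=\mathfrak p_1\cdots\mathfrak p_s$ and verifies
$\left(\frac{L/K}{\mathfrak p}\right)(\sqrt q)=\left(\frac{q}{N\mathfrak p}\right)\sqrt q$
for each prime $\mathfrak p$ by a direct Frobenius-congruence computation $\sqrt q\mapsto \sqrt q^{\,N\mathfrak p}\bmod\mathfrak B$, with a case split on $p$ odd (inert vs.\ split) and $p=2$ (where one must work with $\frac{1+\sqrt q}{2}$ to separate the two values modulo $\mathfrak B$). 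Your route instead invokes the norm-compatibility of Artin symbols across the two towers $\Q\subset K\subset L$ and $\Q\subset\Q(\sqrt q)\subset L$, using that $L/\Q$ is Galois, to reduce everything in one stroke to the Artin map of $\Q(\sqrt q)/\Q$ evaluated at $N_{K/\Q}(\mathfrak a\ro)=A\Z$. This is cleaner and bypasses the case analysis entirely; the paper's approach, on the other hand, is fully self-contained and does not require knowing that $L/\Q$ is Galois or appealing to the general compatibility formalism.
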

\begin{proof}
The first assertion follows from \cite[Theorem 2.2.23 and (2.2.8)]{tsss}.

\cite[Theorem 7.7]{cox} implies that $\mathfrak a$ is a proper $\ro_D$-ideal.
Its norm is $|\ro_D/\mathfrak a|$ by definition; it is easy to see that
every coset in $\ro_D/\mathfrak a$ contains exactly one integer from
$0,\dots,A-1$, so $Norm(\mathfrak a)=A$. Since $\gcd(A,f)=1$, the ideal $\mathfrak a$
is prime to $f$. The second assertion is proved.

Let $\Omega_1(\mathfrak a)=\mathfrak p_1\dots\mathfrak p_s$, where $\mathfrak p_i$ are
prime $\ro$-ideals (not necessarily different). Since
$$A=Norm(\mathfrak a)=Norm(\mathfrak p_1)\dots Norm(\mathfrak p_s)$$
and the Kronecker symbol is multiplicative, it is sufficient to prove that
for every prime ideal $\mathfrak p$ dividing $\Omega_1(\mathfrak a)$ the equality with the Artin symbol
\begin{equation}\label{el.artin1}
\left(\frac{L/K}{\mathfrak p}\right)(\sqrt{q})=\left(\frac{q}{Norm(\mathfrak p)}\right)\sqrt{q}.
\end{equation}
holds. The left-hand side is an image of $\sqrt{q}$ under an automorphism, so it must be one of
$\pm\sqrt q$.

Assume first that $\mathfrak p\mid\Omega_1(\mathfrak a)$, $\mathfrak p\cap\Z=p\Z$, $p$ is odd.
Let $\mathfrak B$ be a prime $\ro_L$-ideal lying above $\mathfrak p$. Since $\gcd(A,D)=1$ and
$q\mid D$, we have $2\sqrt q\not\in\mathfrak B$ and therefore $\sqrt q\not\equiv-\sqrt q
\pmod{\mathfrak B}$. By definition
$$\left(\frac{L/K}{\mathfrak p}\right)(\sqrt{q})\equiv\sqrt{q}^{Norm(\mathfrak p)}
=q^{\frac{Norm(\mathfrak p)-1}2}\sqrt{q}\pmod{\mathfrak B}.$$

If the ideal $p\ro$ is prime (i.e. $\mathfrak p=p\ro$), then $Norm(\mathfrak p)=p^2$
and the right-hand side of \eqref{el.artin1} equals $\sqrt q$. On the other part,
$q^{\frac{Norm(\mathfrak p)-1}2}=(q^{p-1})^{\frac{p+1}2}\equiv1\pmod p$, so
the left-hand side of \eqref{el.artin1} is congruent to $\sqrt q$ modulo $\mathfrak B$
and therefore is equal to $\sqrt q$. Thus, \eqref{el.artin1} is proved in this case.

If the ideal $p\ro$ is not prime, then $Norm(\mathfrak p)=p$ and the right-hand side
of \eqref{el.artin1} equals $\left(\frac qp\right)\sqrt q$. On the other part,
$q^{\frac{Norm(\mathfrak p)-1}2}=q^{\frac{p-1}2}\equiv\left(\frac qp\right)\pmod p$,
so the left-hand side of \eqref{el.artin1} is congruent to $\left(\frac qp\right)\sqrt q$
modulo $\mathfrak B$ and therefore is equal to $\left(\frac qp\right)\sqrt q$. Thus,
\eqref{el.artin1} is proved in this case too.

Assume now that $\mathfrak p\mid\Omega_1(\mathfrak a)$, $\mathfrak p\cap\Z=2\Z$,
a prime $\ro_L$-ideal $\mathfrak B$ lies above $\mathfrak p$. In this case $2\mid A$,
the assumption of theorem implies that $2\nmid D$ and $q$ is odd. Since
$B^2-4AC=D$, we have $D\equiv B^2\equiv1\pmod8$. Thus $d\equiv1\pmod8$
and the ideal $2\ro$ is not prime (\cite[Proposition 13.1.4]{ireland}), so
$Norm(\mathfrak p)=2$. Therefore, the right-hand side of \eqref{el.artin1} equals
$\left(\frac q2\right)\sqrt q$. To calculate the left-hand side of \eqref{el.artin1},
consider $$\left(\frac{L/K}{\mathfrak p}\right)\left(\frac{1+\sqrt q}2\right).$$
This expression must be equal to one of $\frac{1\pm\sqrt q}2$, two possible values
are different modulo $\mathfrak B$. By definition
$$
\left(\frac{L/K}{\mathfrak p}\right)\left(\frac{1+\sqrt{q}}2\right)\equiv
\left(\frac{1+\sqrt{q}}2\right)^2=\frac{q-1}4+\frac{1+\sqrt{q}}2\pmod{\mathfrak B}.
$$
If $\left(\frac q2\right)=1$, then $q\equiv1\pmod8$, $\frac{q-1}4$ is even and hence
lies in $\mathfrak B$. If $\left(\frac q2\right)=-1$, then $q\equiv5\pmod8$, $\frac{q-1}4$
is odd and therefore is congruent to $-1\equiv1$ modulo $\mathfrak B$. In both cases
there is
$$
\left(\frac{L/K}{\mathfrak p}\right)\left(\frac{1+\sqrt{q}}2\right)\equiv
\frac{1+\left(\frac{q}2\right)\sqrt{q}}2\pmod{\mathfrak B},
$$
which implies \eqref{el.artin1}.
\end{proof}

\begin{lemma}\label{el.qdef} Let $d<0$ satisfy one of conditions \eqref{eq.d1} and
\eqref{eq.d2}. There exists the unique (up to the order of factors) representation of $d$
as the product $$d=q_1^*\dots q_t^*,$$ where all $q_i^*$ are pairwise relatively prime,
$$q^*=(-1)^{\frac{q-1}2}q,$$
if $q>0$ is an odd prime, and $q^*\in\{-4,\pm8\}$ if $q=2$.
\end{lemma}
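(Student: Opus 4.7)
The plan is to prove existence by explicit case analysis on the parity of $d$ (and of $d/4$), and uniqueness by a prime-divisibility argument. The key observation is that every admissible $q^*$ is itself a fundamental discriminant with $|q^*| \in \{p, 4, 8\}$, and that $p^* \equiv 1 \pmod 4$ for every odd prime $p$, while $-4 \equiv 4 \pmod 8$ and $\pm 8 \equiv 0 \pmod 8$. Throughout, comparisons of products will use the elementary fact that an integer is determined by its absolute value together with its residue modulo $4$.

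For existence, I would handle the two cases of \eqref{eq.d1} and \eqref{eq.d2} separately. If $d \equiv 1 \pmod 4$ and $d$ is squarefree, I write $|d| = p_1 \cdots p_t$ as a product of distinct odd primes and set $q_i^* = p_i^*$; both $d$ and $\prod p_i^*$ have absolute value $p_1\cdots p_t$ and are $\equiv 1 \pmod 4$, so they coincide. If $d \equiv 0 \pmod 4$, I put $m = d/4$, which is squarefree, negative, and not $\equiv 1 \pmod 4$. When $m$ is odd (hence $\equiv 3 \pmod 4$), I take $q_1^* = -4$ together with $q_i^* = p_i^*$ for the odd primes dividing $|m|$, and verify $\prod p_i^* = -m$ by the same mod-$4$ argument. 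When $m$ is even, I write $m = 2m'$ with $m'$ odd, and since $d<0$ this forces $m' < 0$; then I take $q_i^* = p_i^*$ for the odd primes dividing $|m'|$ and choose $q_1^* \in \{8,-8\}$ forced by the sign of $\prod p_i^* \in \{\pm |m'|\}$ (which is in turn determined by $|m'| \pmod 4$).

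For uniqueness, I would observe that in any such decomposition pairwise coprimality forces each odd prime $p \mid d$ to divide exactly one $q_i^*$; since the only admissible $q^*$ divisible by $p$ are $\pm p$, and the admissibility condition forces $q^* \equiv 1 \pmod 4$, we obtain $q_i^* = p^*$. Hence the odd factors are fully determined, and the factor involving the prime $2$ (if any) equals $d / \prod p^*$, where the product ranges over odd primes dividing $d$; this is a specific integer, and the existence analysis above shows it is precisely one of $-4, 8, -8$ (or absent, if $d$ is odd). I expect the only slightly delicate step to be the even sub-case with $m$ even, where tracking the sign of $q_1^* \in \{8,-8\}$ requires combining the mod-$4$ behaviour of $|m'|$ with the constraint $d < 0$; everything else reduces to elementary sign and residue bookkeeping that the hypothesis \eqref{eq.d2} was designed to make consistent.
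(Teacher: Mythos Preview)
Your proposal is correct and follows essentially the same route as the paper: case analysis on whether $d$ is odd or $d/4$ is odd or even, with the sign of the product fixed by comparing residues modulo $4$. The paper dismisses uniqueness in one word (``obvious''), whereas you spell out the prime-divisibility argument; your treatment is slightly more thorough but not genuinely different.
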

\begin{proof}
The uniqueness is obvious, we need to prove the existence.

If $d$ satisfies \eqref{eq.d2}, the prime factorization of $d$ has the form $d=-q_1\dots q_t$,
where $q_i$ are different odd primes; since $q_i^*=\pm q_i$, it follows that $d=\pm q_1^*\dots q_t^*$;
finally, the sign is correct due to $d\equiv1\pmod4$ and $q_i^*\equiv1\pmod4$ for all $i$.

Assume that $d$ satisfies \eqref{eq.d1}. The prime factorization of $\frac d4$ has one of the forms
$\frac d4=-q_1\dots q_{t-1}$ or $\frac d4=-2q_1\dots q_{t-1}$, where $q_i$ are different odd primes
in both forms. If $\frac d4$ is odd, similarly to the previous case we obtain
$\frac d4=\pm q_1^*\dots q_{t-1}^*$, but this time \eqref{eq.d1} implies $\frac d4\not\equiv1\pmod4$,
so the sign is "-". Multiplying by 4, we obtain the assertion of the lemma. Finally,
if $\frac d4$ is even, we have $\frac d4=\pm2q_1^*\dots q_{t-1}^*$. Selecting the correct sign
in $q_t^*=\pm8$, we obtain the assertion of the lemma.
\end{proof}

It is easy to see that the numbers $q_i^*$ from Lemma \ref{el.qdef} satisfy the assumptions of
Theorem \ref{el.artinaction}. Therefore, $K\qtsqrt\subset L$. The field $K\qtsqrt$ depends only
on the field $K$ (which defines $d$ but not $f$) and is called the \textit{genus field} for $K$.
Hereafter we use the notation
$$
K\qtsqrt=K_G.
$$

\section{Ring of algebraic integers in the genus field}\label{el.integers}
Let $q_i^*$ be as in Lemma \ref{el.qdef}. There are three cases.
\begin{enumerate}
\item All $|q_i|$ are odd primes.
\item $q_t^*=\pm8$.
\item $q_t^*=-4$.
\end{enumerate}
We need to know a basis of algebraic integers in the field $K_G$ over $\Z$.
Since $d=q_1^*\dots q_t^*$, we have $\sqrt{d}\in\Q\qtsqrt$ and
therefore $K_G=\Q\qtsqrt$.
The formulas are slightly
different in different cases, so we consider each case separately.

\begin{lemma}\label{el.ramint}
Let $M$ be a number field. Let $p\in\Z$ be a prime such that
the ideal $p\Z$ is unramified in $M$. Let $c\in M$ satisfy
the condition $pc^2\in\ro_M$. Then $c\in\ro_M$.
\end{lemma}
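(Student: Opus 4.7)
The plan is to work with $\mathfrak{p}$-adic valuations on the fractional ideals of $\ro_M$ and check non-negativity at every prime.

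First I would recall that $c \in M$ gives rise to a fractional $\ro_M$-ideal $c\ro_M$, and for every nonzero prime ideal $\mathfrak{p} \subset \ro_M$ the valuation $v_\mathfrak{p}$ extends to $M^*$. The standard criterion is that $c \in \ro_M$ if and only if $v_\mathfrak{p}(c) \ge 0$ for every such $\mathfrak{p}$. So the task reduces to showing this non-negativity at each $\mathfrak{p}$.

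Next I would translate the hypothesis $pc^2 \in \ro_M$ into the valuation inequality
\[
2v_\mathfrak{p}(c) + v_\mathfrak{p}(p) \ge 0
\]
for every prime $\mathfrak{p}$ of $\ro_M$. If $\mathfrak{p}$ does not lie above $p$, then $v_\mathfrak{p}(p) = 0$, and this immediately yields $v_\mathfrak{p}(c) \ge 0$. If $\mathfrak{p}$ lies above $p$, the assumption that $p\Z$ is unramified in $M$ says exactly that the ramification index of $\mathfrak{p}$ over $p$ equals one, i.e.\ $v_\mathfrak{p}(p) = 1$. The inequality becomes $2v_\mathfrak{p}(c) \ge -1$, so $v_\mathfrak{p}(c) \ge -\tfrac{1}{2}$; since $v_\mathfrak{p}(c) \in \Z$, this forces $v_\mathfrak{p}(c) \ge 0$.

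Combining the two cases we obtain $v_\mathfrak{p}(c) \ge 0$ at every prime, hence $c \in \ro_M$. The only place the unramifiedness is used is the step $v_\mathfrak{p}(p) = 1$ that converts the fractional $-\tfrac{1}{2}$ lower bound into the integral $0$ lower bound; this is the whole point of the hypothesis and is the only subtlety, so there is no real obstacle to carry out.
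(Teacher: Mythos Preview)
Your proof is correct and follows essentially the same approach as the paper: both arguments use the prime factorization of the fractional ideal $c\ro_M$ and the fact that unramifiedness bounds $v_{\mathfrak p}(p)\le 1$, so that $2v_{\mathfrak p}(c)\ge -1$ forces $v_{\mathfrak p}(c)\ge 0$. The only cosmetic difference is that the paper argues by contradiction (picking a prime with negative exponent) while you argue directly.
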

\begin{proof}
Assume that $c\not\in\ro_M$. The fractional ideal $c\ro_M$
has the factorization to the prime ideals $c\ro_M=\mathfrak q_1^{s_1}\dots\mathfrak q_m^{s_m}$,
where $\mathfrak q_i$ are pairwise different and $s_1<0$. The degree
of $\mathfrak q_1$ in the prime factorization of $p\ro_M$ is at most 1 because
$p\ro_M$ is unramified. The degree of $\mathfrak q_1$ in the prime factorization
of $c^2\ro_M$ is at most $-2$. Therefore, the degree of $\mathfrak q_1$
in the prime factorization of $pc^2\ro_M$ is negative. The contradiction with
$pc^2\in\ro_M$ proves the lemma.
\end{proof}

\begin{theorem}\label{el.fundbasisstart}
Let $\tilde q_1,\dots,\tilde q_r$ be pairwise different integers such that
$|\tilde q_i|$ are odd primes and $\tilde q_i\equiv1\pmod4$.
Let $\alpha_i=\frac{1+\sqrt{\tilde q_i}}2$ and $\tilde\alpha_i=\frac{1-\sqrt{\tilde q_i}}2$.
Then:
\begin{enumerate}
\item The set $\{\alpha_1^{s_1}\dots\alpha_r^{s_r}:(s_1,\dots,s_r)\in\{0,1\}^r\}$
is a basis of integers in the field $\Q\tqrsqrt$ over $\Z$.
\item The set $\{\tilde\alpha_1^{s_1}\alpha_1^{1-s_1}\dots\tilde\alpha_r^{s_r}\alpha_r^{1-s_r}:(s_1,\dots,s_r)\in\{0,1\}^r\}$
is a basis of integers in the field $\Q\tqrsqrt$ over $\Z$.
\end{enumerate}
\end{theorem}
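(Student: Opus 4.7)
The plan is to induct on $r$. For $r=1$ the first assertion is the classical $\ro_{\Q(\sqrt{\tilde q_1})} = \Z \oplus \Z \alpha_1$, valid because $\tilde q_1 \equiv 1 \pmod 4$; the second assertion follows in the base case because $\tilde\alpha_1 = 1 - \alpha_1$ gives a unimodular change of basis from $\{1,\alpha_1\}$ to $\{\alpha_1,\tilde\alpha_1\}$.

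For the inductive step put $F = \Q(\sqrt{\tilde q_1},\dots,\sqrt{\tilde q_{r-1}})$ and $M = F(\sqrt{\tilde q_r})$. I would first record (inductively, by computing the Gram matrix of the explicit basis of $\ro_F$) that the discriminant of $\ro_F$ over $\Z$ is divisible only by the odd primes $|\tilde q_1|,\dots,|\tilde q_{r-1}|$. In particular both $|\tilde q_r|$ and $2$ are unramified in $F$, so Lemma~\ref{el.ramint} is applicable for both.

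Given $c \in \ro_M$, write $c = u + v \alpha_r$ with $u, v \in F$. The minimal polynomial of $c$ over $F$ is $T^2 - (2u + v) T + (u^2 + uv + v^2 (1 - \tilde q_r)/4)$, and integrality over $\ro_F$ forces both non-leading coefficients into $\ro_F$. The identity $v^2 \tilde q_r = (2u+v)^2 - 4(u^2 + uv + v^2(1-\tilde q_r)/4)$ puts $|\tilde q_r|\,v^2$ in $\ro_F$, so Lemma~\ref{el.ramint} with $p = |\tilde q_r|$ yields $v \in \ro_F$; hence $2u = (2u+v) - v \in \ro_F$. Since $(1-\tilde q_r)/4 \in \Z$ we also have $u^2 + uv \in \ro_F$; combining with $uv \in \tfrac12 \ro_F$ (from $2u,v \in \ro_F$) gives $u^2 \in \tfrac12 \ro_F$, i.e.\ $2u^2 \in \ro_F$. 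A second application of Lemma~\ref{el.ramint}, now with $p = 2$, yields $u \in \ro_F$. Applying the inductive description of $\ro_F$ to $u$ and $v$ then expresses $c$ in the $\Z$-span of the set in assertion~1; $\Z$-linear independence is automatic since the set already has $2^r = [M:\Q]$ elements.

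For the second assertion, the local change from $(1,\alpha_i)$ to $(\alpha_i,\tilde\alpha_i) = (\alpha_i, 1-\alpha_i)$ has determinant $-1$, and the global change between the two sets is the $r$-fold tensor product of these local changes, still of determinant $\pm 1$; hence the second set is a $\Z$-basis iff the first is. The main obstacle is the final upgrade $2u \in \ro_F \Rightarrow u \in \ro_F$: one must notice $2u^2 \in \ro_F$ (the naive $(2u)^2 \in \ro_F$ being vacuous) and invoke, via the inductive discriminant control, the unramifiedness of $2$ in $F$ so that Lemma~\ref{el.ramint} applies.
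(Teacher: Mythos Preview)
Your argument is correct and follows essentially the same route as the paper: induction on $r$, unramifiedness of $|\tilde q_r|$ and $2$ in $F$ established via the discriminant, the trace/norm of $u+v\alpha_r$ giving $2u+v,\,u^2+uv+v^2\frac{1-\tilde q_r}4\in\ro_F$, then Lemma~\ref{el.ramint} twice to force first $v\in\ro_F$ and then (via $2u^2\in\ro_F$) $u\in\ro_F$. The only point left implicit is that $\sqrt{\tilde q_r}\notin F$ (needed to write $c=u+v\alpha_r$ uniquely and to have $[M:\Q]=2^r$), but this follows immediately from your own observation that $|\tilde q_r|$ is unramified in $F$, exactly as the paper notes.
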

\begin{proof}
We prove the theorem by induction on $r$. For $r=0$ the theorem is trivial. Assume
that the theorem is proved for all fields $M_i=\Q\tqsqrt{i}$ with $i=1,\dots,r-1$.

\begin{lemma}\label{el.unramify}
Let $p\in\Z$ be a prime not dividing any of numbers $\tilde q_1,\dots,\tilde q_{r-1}$.
Then the ideal $p\Z$ is unramified in $M_{r-1}$.
\end{lemma}
\begin{proof}
It is sufficient to check that any prime ideal of the field $M_{i-1}$ dividing
$p\ro_{M_{i-1}}$ is unramified in $M_i=M_{i-1}(\sqrt{\tilde q_i})$ for all $1\le i\le r-1$.

Let $\mathfrak p$ be a prime ideal of the field $M_{i-1}$ such that $\mathfrak p\cap\Z=p\Z$.
The extension $M_{i-1}\subset M_i$ is generated by $\alpha_i$; the inductive hypothesis
implies that $(1,\alpha_i)$ is a basis of $\ro_{M_i}/\ro_{M_{i-1}}$. The only nontrivial
automorphism in $\Gal(M_i/M_{i-1})$ transforms this basis to $(1,\tilde\alpha_i)$.
According to \cite[Propositions III.8 and III.14]{langalg}, $\mathfrak p$ is unramified
if $\mathfrak p$ does not divide
$\det\begin{pmatrix}1&\alpha_i\\1&\tilde\alpha_i\end{pmatrix}^2=(\alpha_i-\tilde\alpha_i)^2=\tilde q_i$.
This is true, because $p$ does not divide $\tilde q_i$.
\end{proof}

Apply Lemma \ref{el.unramify} to $p=|\tilde q_r|$. The factorization of $\tilde q_r\ro_{M_{r-1}}$
in the prime ideals does not contain squares. In particular, $\sqrt{\tilde q_r}\not\in M_{r-1}$
because otherwise $\tilde q_r\ro_{M_{r-1}}=(\sqrt{\tilde q_r}\ro_{M_{r-1}})^2$. Therefore,
$(1,\alpha_r)$ is a $M_{r-1}$-basis of $M_r$.

Let $a+b\alpha_r$ be an algebraic integer and $a,b\in M_{r-1}$. The number $a+b(1-\alpha_r)$
is conjugate to $a+b\alpha_r$ and hence is also an algebraic integer. Thus, their
sum $x=2a+b$ and product $y=a^2+ab+b^2\frac{1-\tilde q_r}4$ are also algebraic integers and
lie in $\ro_{M_{r-1}}$. Furthermore, $x^2-4y=\tilde q_r b^2\in\ro_{M_{r-1}}$. Lemma \ref{el.ramint}
implies that $b\in\ro_{M_{r-1}}$. Thus, $2a\in\ro_{M_{r-1}}$, $a^2+ab\in\ro_{M_{r-1}}$,
$2a^2=2(a^2+ab)-2a\cdot b\in\ro_{M_{r-1}}$. Applying Lemmas \ref{el.unramify} and \ref{el.ramint}
to $p=2$, we obtain $a\in\ro_{M_{r-1}}$. So if $a+b\alpha_r$ is an algebraic integer and
$a,b\in M_{r-1}$, then $a,b\in\ro_{M_{r-1}}$. The converse assertion is obvious, so
$(1,\alpha_r)$ is a $\ro_{M_{r-1}}$-basis of $\ro_{M_r}$. This proves the inductive step
for the set $\{\alpha_1^{s_1}\dots\alpha_r^{s_r}\}$. To prove the second assertion
of the theorem it is sufficient to note that
$(1-\alpha_r,\alpha_r)=\left(\frac12(1-\sqrt{\tilde q_r}),\frac12(1+\sqrt{\tilde q_r})\right)$
also is a $\ro_{M_{r-1}}$-basis of $\ro_{M_r}$.
\end{proof}

\begin{theorem}
Let $\tilde q_1,\dots,\tilde q_{r-1}$ be the same as in Theorem \ref{el.fundbasisstart}
and $\tilde q_r=\pm8$. Let $\alpha_r=\sqrt{\frac{\tilde q_r}4}$. Then:
\begin{enumerate}
\item The set $\{\alpha_1^{s_1}\dots\alpha_r^{s_r}:(s_1,\dots,s_r)\in\{0,1\}^r\}$
is a basis of integers in the field $\Q\tqrsqrt$ over $\Z$.
\item The set $$\{\tilde\alpha_1^{s_1}\alpha_1^{1-s_1}\dots\tilde\alpha_{r-1}^{s_{r-1}}\alpha_{r-1}^{1-s_{r-1}}\alpha_r^{s_r}:(s_1,\dots,s_r)\in\{0,1\}^r\}$$
is a basis of integers in the field $\Q\tqrsqrt$ over $\Z$.
\end{enumerate}
\end{theorem}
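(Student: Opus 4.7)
The plan is to imitate the inductive step in the proof of Theorem \ref{el.fundbasisstart}, treating $M_r=\Q\tqrsqrt$ as the degree-two extension of $M_{r-1}=\Q\tqsqrt{r-1}$ obtained by adjoining $\alpha_r$, but with $\alpha_r^2=\pm2$ rather than an odd prime. I would proceed in three steps; the second is where all the work sits.

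First, I would apply Lemma \ref{el.unramify} with $p=2$ — legitimate because $2$ divides none of the odd primes $|\tilde q_1|,\dots,|\tilde q_{r-1}|$ — to conclude that $2\Z$ is unramified in $M_{r-1}$. This rules out $\alpha_r\in M_{r-1}$, since otherwise the factorisation $2\ro_{M_{r-1}}=(\alpha_r\ro_{M_{r-1}})^2$ would exhibit a prime above $2$ with ramification index at least two. Hence $(1,\alpha_r)$ is an $M_{r-1}$-basis of $M_r$.

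Second, I would show that $(1,\alpha_r)$ is in fact an $\ro_{M_{r-1}}$-basis of $\ro_{M_r}$. Given $x=a+b\alpha_r\in\ro_{M_r}$ with $a,b\in M_{r-1}$, the nontrivial element of $\Gal(M_r/M_{r-1})$ sends $x$ to $a-b\alpha_r$, which is also integral, so the trace and norm give $2a\in\ro_{M_{r-1}}$ and $a^2\mp2b^2\in\ro_{M_{r-1}}$ (sign determined by $\tilde q_r=\pm8$). Forming $(2a)^2-4(a^2\mp2b^2)=\pm8b^2=\pm2(2b)^2$ and applying Lemma \ref{el.ramint} at $p=2$ yields $2b\in\ro_{M_{r-1}}$; then $2a^2=2(a^2\mp2b^2)\pm(2b)^2\in\ro_{M_{r-1}}$, and a second use of Lemma \ref{el.ramint} yields $a\in\ro_{M_{r-1}}$; finally $b\alpha_r=x-a\in\ro_{M_r}$ has square $\pm2b^2$, so a third application of Lemma \ref{el.ramint} gives $b\in\ro_{M_{r-1}}$.

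With $(1,\alpha_r)$ established as an $\ro_{M_{r-1}}$-basis of $\ro_{M_r}$, multiplying it by either of the two $\Z$-bases of $\ro_{M_{r-1}}$ supplied by Theorem \ref{el.fundbasisstart} delivers the two claimed $\Z$-bases of $\ro_{M_r}$; note that the second assertion here retains $\alpha_r^{s_r}$ in the last slot (rather than a mixed $\tilde\alpha_r^{s_r}\alpha_r^{1-s_r}$) precisely because only one basis $(1,\alpha_r)$ of $\ro_{M_r}$ over $\ro_{M_{r-1}}$ is used. The main obstacle is the second step: because $\alpha_r^2=\pm2$ is itself a multiple of the only problematic prime, the trace–norm relations expose $a$ and $b$ only up to factors of $2$, and integrality has to be wrung out by three successive appeals to Lemma \ref{el.ramint} at $p=2$, each leveraging the unramifiedness of $2$ in $M_{r-1}$.
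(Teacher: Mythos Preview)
Your proposal is correct and follows essentially the same route as the paper: unramifiedness of $2$ in $M_{r-1}$ via Lemma~\ref{el.unramify}, then trace and norm to get $2a,\,a^2\mp2b^2\in\ro_{M_{r-1}}$, and three successive applications of Lemma~\ref{el.ramint} at $p=2$ to force $2b$, $a$, and finally $b$ into $\ro_{M_{r-1}}$. The only cosmetic difference is your third step, where you obtain $\pm2b^2\in\ro_{M_{r-1}}$ from $(b\alpha_r)^2$ rather than from $a^2-(a^2\mp2b^2)$ as the paper does; the content is identical.
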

\begin{proof}
Let $M=\Q\tqsqrt{r-1}$. Apply Lemma \ref{el.unramify} with $p=2$ and Theorem \ref{el.fundbasisstart}.
The ideal $2\Z$ is unramified in $M$. As shown above, this implies that $\sqrt{\tilde q_r}\not\in M$
and $(1,\sqrt{\tilde q_r})$ is a $M$-basis of $M(\sqrt{\tilde q_r})$.

Let $a+b\alpha_r$ be an algebraic integer and $a,b\in M$. The number $a-b\alpha_r$ is conjugate to
$a+b\alpha_r$ and therefore is also an algebraic integer. Thus, their sum $2a$ and product $a^2\mp2b^2$
are algebraic integers and lie in $\ro_M$. Furthermore, $(2a)^2-4(a^2\mp2b^2)=\pm2(2b)^2\in\ro_M$,
with Lemma \ref{el.ramint} this implies $2b\in\ro_M$. Now $2(a^2\mp2b^2)\pm(2b)^2=2a^2\in\ro_M$,
with Lemma \ref{el.ramint} this implies $a\in\ro_M$. Finally, $a^2-(a^2\mp2b^2)=\pm2b^2\in\ro_M$,
with Lemma \ref{el.ramint} this implies $b\in\ro_M$. Therefore, $(1,\alpha_r)$ is a $\ro_M$-basis
of the ring of integers in $M(\sqrt{\tilde q_r})$. Use of Theorem \ref{el.fundbasisstart}
concludes the proof.
\end{proof}

\begin{theorem}\label{el.fundbasisend}
Let $\tilde q_1,\dots,\tilde q_{r-1}$ be the same as in Theorem \ref{el.fundbasisstart}
and $\tilde q_r=-4$. Let $\alpha_r=\sqrt{\frac{\tilde q_r}4}=i$. Then:
\begin{enumerate}
\item The set $\{\alpha_1^{s_1}\dots\alpha_r^{s_r}:(s_1,\dots,s_r)\in\{0,1\}^r\}$
is a basis of integers in the field $\Q\tqrsqrt$ over $\Z$.
\item The set $$\{\tilde\alpha_1^{s_1}\alpha_1^{1-s_1}\dots\tilde\alpha_{r-1}^{s_{r-1}}\alpha_{r-1}^{1-s_{r-1}}\alpha_r^{s_r}:(s_1,\dots,s_r)\in\{0,1\}^r\}$$
is a basis of integers in the field $\Q\tqrsqrt$ over $\Z$.
\end{enumerate}
\end{theorem}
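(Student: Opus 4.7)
The plan is to mirror the proof of the preceding theorem (the $\tilde q_r=\pm 8$ case). Set $M=\Q\tqsqrt{r-1}$; Lemma~\ref{el.unramify} applied with $p=2$ tells us that $2\Z$ is unramified in $M$. I would use this to rule out $i\in M$: if $i\in M$ then $\Q(i)\subseteq M$, but $2$ ramifies in $\Q(i)$ (since $2\ro_{\Q(i)}=(1+i)^2$), which would force $2$ to ramify in $M$, a contradiction. Hence $(1,i)$ is an $M$-basis of $M(i)=\Q\tqrsqrt$, and the inductive step reduces to showing it is an $\ro_M$-basis of $\ro_{M(i)}$.

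Given $a+bi\in\ro_{M(i)}$ with $a,b\in M$, the Galois conjugate $a-bi$ is also an algebraic integer, so $2a\in\ro_M$ and $a^2+b^2\in\ro_M$. The combination $(2a)^2-4(a^2+b^2)=-4b^2$ then yields $2(2b)^2=8b^2\in\ro_M$, and Lemma~\ref{el.ramint} (with $c=2b$, $p=2$) gives $2b\in\ro_M$. So far this parallels the $\pm 8$ proof.

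The main obstacle is the final step, concluding that $a$ and $b$ themselves lie in $\ro_M$. In the $\pm 8$ case the product $a^2\mp 2b^2$ carried an extra factor of $2$ that let one isolate $2a^2$ and $\pm 2b^2\in\ro_M$ and feed each separately into Lemma~\ref{el.ramint}; here the product $a^2+b^2$ has no such factor, so a direct imitation fails. My plan is to argue at the primes above $2$ instead. Setting $A=2a$ and $B=2b$, both in $\ro_M$, we have $A^2+B^2=4(a^2+b^2)\in 4\ro_M$. Since $2\ro_M=\mathfrak p_1\cdots\mathfrak p_s$ factors into distinct primes, $\ro_M/2\ro_M$ is a product of fields and hence reduced; the identity $A^2+B^2=(A+B)^2-2AB$ together with $A^2+B^2\equiv 0\pmod{2\ro_M}$ therefore forces $A+B\in 2\ro_M$. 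Then $2AB=(A+B)^2-(A^2+B^2)\in 4\ro_M$, so $AB\in 2\ro_M$. At each prime $\mathfrak p\mid 2$, $AB\in\mathfrak p$ forces $A\in\mathfrak p$ or $B\in\mathfrak p$, while $A+B\in\mathfrak p$ makes these conditions equivalent, so both $A$ and $B$ belong to $\bigcap_{\mathfrak p\mid 2}\mathfrak p=2\ro_M$, i.e., $a,b\in\ro_M$. Combining this with the two integral bases of $\ro_{M_{r-1}}$ furnished by Theorem~\ref{el.fundbasisstart} then yields both assertions of the theorem.
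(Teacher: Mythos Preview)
Your proposal is correct and follows the paper's overall strategy: reduce to showing that $(1,i)$ is an $\ro_M$-basis of $\ro_{M(i)}$, use the conjugate to get $2a,\,a^2+b^2\in\ro_M$, and then exploit the unramifiedness of $2$ in $M$. The difference lies only in the last step. The paper observes directly that $2(a^2+b^2)+4ab=2(a+b)^2\in\ro_M$ and applies Lemma~\ref{el.ramint} to obtain $a+b\in\ro_M$; then $a-b=2a-(a+b)\in\ro_M$, so $a^2-b^2=(a+b)(a-b)\in\ro_M$, whence $2a^2,2b^2\in\ro_M$, and one more application of Lemma~\ref{el.ramint} gives $a,b\in\ro_M$. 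Your route first extracts $2b\in\ro_M$ and then argues with the reducedness of $\ro_M/2\ro_M$ and a prime-by-prime analysis; this is a perfectly valid reformulation of the same unramifiedness input (indeed, your ``$(A+B)^2\in 2\ro_M\Rightarrow A+B\in 2\ro_M$'' is exactly Lemma~\ref{el.ramint} applied to $c=(A+B)/2$). The paper's path is a little shorter and avoids the preliminary detour through $2b\in\ro_M$, while yours makes the role of the residue ring $\ro_M/2\ro_M$ more explicit; both reach the same conclusion by essentially the same mechanism.
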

\begin{proof} Let $M=\Q\tqsqrt{r-1}$. The identity $2=-i(1+i)^2$ shows that the ideal $2\Z$
is ramified in any field containing $i$. Lemma \ref{el.unramify} and Theorem \ref{el.fundbasisstart}
imply that $2\Z$ is unramified in $M$. Therefore, $i\not\in M$.

Let $a+bi$ be an algebraic integer and $a,b\in M$. The number $a-bi$ is conjugate to $a+bi$
and therefore is also an algebraic integer. Thus, their sum $2a$ and product $a^2+b^2$ are also
algebraic integers and lie in $\ro_M$. Furthermore, $2(a^2+b^2)+2a\cdot 2b=2(a+b)^2\in\ro_M$,
so Lemmas \ref{el.unramify} and \ref{el.ramint} with $p=2$ and Theorem \ref{el.fundbasisstart}
imply that $a+b\in\ro_M$. Now $2a-(a+b)=a-b\in\ro_M$, $(a+b)(a-b)=a^2-b^2\in\ro_M$,
$2a^2\in\ro_M$, $2b^2\in\ro_M$. Applying Lemmas \ref{el.unramify}, \ref{el.ramint} and Theorem
\ref{el.fundbasisstart} again, we obtain $a,b\in\ro_M$. Thus, $(1,\alpha_t)$ is a $\ro_M$-basis
of the ring of integers in $M(\sqrt{\tilde q_r})$. Use of Theorem \ref{el.fundbasisstart}
concludes the proof.
\end{proof}

Let $\oplus$ denote the addition of integer numbers modulo 2.

In each case we have $[K_G:\Q]=2^t$. Thus, $\sqrt{q_j^*}\not\in\Q(\dots,\sqrt{q_{j-1}^*},\sqrt{q_{j+1}^*},\dots)$
for any $1\le j\le t$. Therefore, $\Gal(K_G/\Q)$ has $t$ elements $\tau_j$ with the following action:
\begin{equation}\label{el.tausingledef}
\tau_j\left(\sqrt{q_j^*}\right)=-\sqrt{q_j^*},\quad\tau_j\left(\sqrt{q_i^*}\right)=\sqrt{q_i^*}\mbox{ for }i\ne j.
\end{equation}
Let
$$
\tau'_\mu=\tau_1^{\mu_1}\ldots\tau_t^{\mu_t}\in\Gal(K_G/\Q)
$$
for $\mu\in\{0,1\}^t$. Comparing the action of $\tau'_\mu$ on $\sqrt{q_i^*}$,
it is easy to see that $\tau_\mu'$ are pairwise different. We obtain $2^t=|\Gal(K_G/\Q)|$
different elements of $\Gal(K_G/\Q)$, so this group does not contain other elements.

The theorems above give a $\Z$-basis of $\ro_{K_G}$. We also need the intersection
$\ro_{K_G}\cap\R$ (obviously, it is the ring of integers in $K_G\cap\R$) and
the intersection $\ro_{K_G}\cap i\R$ (obviously, it is a $\Z$-module).
There is at least one negative $q_i^*$. Let $u$ be the number of positive $q_i^*$,
$0\le u<t$. We assume without loss of generality that $q_1^*>0$, \dots, $q_u^*>0$,
$q_{u+1}^*<0$, \dots, $q_t^*<0$.

The complex conjugation acts on $\sqrt{q_i^*}$ same as the composition
$\tau_{u+1}\dots\tau_t$. Since $K_G\cap\R$ is the fixed field of
the complex conjugation restricted to $K_G$,
the group $\Gal((K_G\cap\R)/\Q)$
is isomorphic to the factorgroup of $\Gal(K_G/\Q)$ by the subgroup
generated by the complex conjugation. We select an element with $\mu_t=0$
as a representative in each coset and obtain that $\Gal((K_G\cap\R)/\Q)$
consists of the automorphisms
\begin{equation}\label{el.taudef}
\tau_\lambda=\tau_{\lambda_1,\dots,\lambda_{t-1}}=\tau'_{\lambda_1,\dots,\lambda_{t-1},0}=\tau_1^{\lambda_1}\ldots\tau_{t-1}^{\lambda_{t-1}}
\end{equation}
for $\lambda\in\{0,1\}^{t-1}$, $\tau_\lambda$ are pairwise different for different $\lambda$.

Note that $\sqrt{d}$ has two possible values. Further we select the value that equals
the product $\sqrt{q_1^*}\dots\sqrt{q_t^*}$, where the values of individual square
roots are the same as in definition of $\alpha_i$ and $\tilde\alpha_i$.

\begin{theorem}\label{el.basiscapr}
Let $q_1^*,\dots,q_t^*$ be as in Lemma \ref{el.qdef}, odd and numbered so that
$q_i^*>0$ for $1\le i\le u$, $q_i^*<0$ for $u<i\le t$, where $0\le u\le t-1$.
Let $K_G=\Q\qtsqrt$. Let $\tau_\lambda$ be defined by \eqref{el.taudef}.
\begin{enumerate}
\item Define
\begin{multline*}
\beta_{s_1,\dots,s_{t-1}}=\beta_{s_1,\dots,s_{t-1}}(q_1^*,\dots,q_t^*)\\
=\left(\prod_{i=1}^u\tilde\alpha_i^{s_i}\alpha_i^{1-s_i}\right)
\left(
\left(\prod_{i=u+1}^{t-1}\tilde\alpha_i^{s_i}\alpha_i^{1-s_i}\right)\alpha_t+
\left(\prod_{i=u+1}^{t-1}\tilde\alpha_i^{1-s_i}\alpha_i^{s_i}\right)\tilde\alpha_t
\right).
\end{multline*}
The set $\{\beta_{s_1,\dots,s_{t-1}}:(s_1,\dots,s_{t-1})\in\{0,1\}^{t-1}\}$ is
a $\Z$-basis of the ring of integers in $K_G\cap\R$.
\item Define
\begin{multline*}
\beta^*_{s_1,\dots,s_{t-1}}=\beta^*_{s_1,\dots,s_{t-1}}(q_1^*,\dots,q_t^*)\\
=\left(\prod_{i=1}^u(-\tilde\alpha_i)^{s_i}\alpha_i^{1-s_i}\right)
\left(
\left(\prod_{i=u+1}^{t-1}(-\tilde\alpha_i)^{s_i}\alpha_i^{1-s_i}\right)\alpha_t-
\left(\prod_{i=u+1}^{t-1}\tilde\alpha_i^{1-s_i}(-\alpha_i)^{s_i}\right)\tilde\alpha_t
\right).
\end{multline*}
The set $\{\beta^*_{s_1,\dots,s_{t-1}}:(s_1,\dots,s_{t-1})\in\{0,1\}^{t-1}\}$ is
a $\Z$-basis of the $\Z$-module $\ro_{K_G}\cap i\R$.
\item For any $\eta,\nu\in\{0,1\}^{t-1}$
$$
\sum_{\mu\in\{0,1\}^{t-1}}(-1)^{\mu_1+\ldots+\mu_{t-1}}\tau_\mu\left(\beta_{\eta_1,\dots,\eta_{t-1}}\beta^*_{\nu_1,\dots,\nu_{t-1}}\right)
=\begin{cases}
\sqrt d,&\mbox{ if }\eta=\nu,\\
0,&\mbox{ otherwise.}\end{cases}
$$
\end{enumerate}
\end{theorem}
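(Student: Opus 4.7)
The plan is to work with the $\Z$-basis $\{m_s = \prod_{i=1}^t \tilde\alpha_i^{s_i}\alpha_i^{1-s_i} : s \in \{0,1\}^t\}$ of $\ro_{K_G}$ provided by Theorem \ref{el.fundbasisstart}, and to exploit the action of complex conjugation and of the Galois automorphisms $\tau_j$ on this basis. Since $d<0$, the integer $t-u$ is forced to be odd, and this parity is used crucially in Part 3. Complex conjugation coincides on $K_G$ with $\tau_{u+1}\cdots\tau_t$ and sends $m_s$ to $m_{s\oplus c}$, where $c\in\{0,1\}^t$ has $1$'s precisely in positions $u+1,\dots,t$; this involution is fixed-point-free because $t-u\ge 1$.

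For Part 1, an element $\sum_s c_s m_s \in \ro_{K_G}$ lies in $\R$ iff $c_s=c_{s\oplus c}$ for every $s$. Parametrizing the orbits $\{s,s\oplus c\}$ by their representative with $s_t=0$, i.e.\ by $\eta\in\{0,1\}^{t-1}$ via $s=(\eta,0)$, a direct expansion of the defining formula shows $\beta_\eta = m_{(\eta,0)} + m_{(\eta,0)\oplus c}$, so $\{\beta_\eta\}$ is precisely the orbit-sum $\Z$-basis of $\ro_{K_G}\cap\R$. Part 2 is the antisymmetric analogue: $\ro_{K_G}\cap i\R$ consists of elements with $c_{s\oplus c}=-c_s$, and pushing the extra $(-1)^{\nu_i}$ signs in the definition of $\beta^*_\nu$ through the products yields $\beta^*_\nu = (-1)^{\nu_1+\dots+\nu_{t-1}}(m_{(\nu,0)} - m_{(\nu,0)\oplus c})$; thus $\{\beta^*_\nu\}$ coincides with the natural orbit-difference $\Z$-basis up to one sign per element and is itself a $\Z$-basis.

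For Part 3, use the operator identity $\sum_{\mu\in\{0,1\}^{t-1}}(-1)^{\mu_1+\dots+\mu_{t-1}}\tau_\mu = \prod_{j=1}^{t-1}(1-\tau_j)$. Expand any $x\in K_G$ in the $\Q$-basis $\{\prod_{i\in T}\sqrt{q_i^*} : T\subseteq\{1,\dots,t\}\}$; since $1-\tau_j$ doubles $\prod_{i\in T}\sqrt{q_i^*}$ when $j\in T$ and annihilates it otherwise, this operator picks out $2^{t-1}$ times the sum of the coefficients of $\sqrt{q_1^*\cdots q_{t-1}^*}$ and $\sqrt{q_1^*\cdots q_t^*}=\sqrt d$. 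The product $\beta_\eta\beta^*_\nu$ is purely imaginary, while $\sqrt{q_1^*\cdots q_{t-1}^*}$ is real (its number $t-1-u$ of imaginary factors is even because $t-u$ is odd), so the coefficient of the former vanishes, and it suffices to show that the coefficient of $\sqrt d$ in $\beta_\eta\beta^*_\nu$ equals $2^{-(t-1)}$ if $\eta=\nu$ and $0$ otherwise.

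To compute this coefficient, write $m_s = 2^{-t}\prod_{i=1}^t(1+(-1)^{s_i}\sqrt{q_i^*}) = 2^{-t}\sum_T \chi_T(s)\prod_{i\in T}\sqrt{q_i^*}$ with characters $\chi_T(s)=(-1)^{\sum_{i\in T}s_i}$. In a product $m_a m_b$ the terms contributing to $\sqrt d$ come from pairs $(T,T')$ with $T'=\{1,\dots,t\}\setminus T$; after using $\chi_{\{1,\dots,t\}\setminus T}(b) = (-1)^{b_1+\dots+b_t}\chi_T(b)$ and the orthogonality $\sum_T\chi_T(a\oplus b)=2^t[a=b]$, this coefficient reduces to $2^{-t}(-1)^{a_1+\dots+a_t}[a=b]$. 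Applying this to the four monomial products in $\beta_\eta\beta^*_\nu = (-1)^{\nu_1+\dots+\nu_{t-1}}(m_{(\eta,0)} + m_{(\eta,0)\oplus c})(m_{(\nu,0)} - m_{(\nu,0)\oplus c})$, the two mixed products vanish because their last coordinates disagree, and the two diagonal products both force $\eta=\nu$ and combine via the parity identity $(-1)^{\sum_i((\eta,0)\oplus c)_i}=(-1)^{\eta_1+\dots+\eta_{t-1}+(t-u)}=-(-1)^{\eta_1+\dots+\eta_{t-1}}$ (which is precisely where the oddness of $t-u$ enters). The two surviving contributions then reinforce to yield $2^{-(t-1)}[\eta=\nu]$. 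The main obstacle is this final sign-and-character bookkeeping; everything else is a routine orbit count or orthogonality argument.
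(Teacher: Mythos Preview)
Your argument is correct. Parts 1 and 2 follow the paper's proof essentially verbatim: both you and the paper take the basis $\beta'_s=m_s$ of Theorem~\ref{el.fundbasisstart}, observe that complex conjugation acts as the fixed-point-free involution $s\mapsto s\oplus c$, and read off the symmetric/antisymmetric orbit bases; your explicit identification $\beta^*_\nu=(-1)^{\nu_1+\dots+\nu_{t-1}}(m_{(\nu,0)}-m_{(\nu,0)\oplus c})$ is exactly the ``$\pm$'' the paper leaves implicit.

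For Part 3 the two proofs genuinely diverge. The paper computes $\tau_\mu(\beta_\eta)$ and $\tau_\mu(\beta^*_\nu)$ explicitly, expands their product as $(a+b)(c-d)$, and then sums over each $\mu_i$ separately, using the elementary identity
\[
\sum_{\mu_i=0}^1(-1)^{\mu_i+\nu_i}\tilde\alpha_i^{(\mu_i\oplus\eta_i)+(\mu_i\oplus\nu_i)}\alpha_i^{2-(\mu_i\oplus\eta_i)-(\mu_i\oplus\nu_i)}=\delta_{\eta_i\nu_i}\sqrt{q_i^*}
\]
together with its variants; the oddness of $t-u$ enters as $\prod_{i=u+1}^{t-1}(-1)=1$. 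Your route is more structural: you recognise $\sum_\mu(-1)^{|\mu|}\tau_\mu=\prod_{j=1}^{t-1}(1-\tau_j)$ as a projection onto the span of $\sqrt{q_1^*\cdots q_{t-1}^*}$ and $\sqrt d$, discard the first by the real/imaginary dichotomy (this is where you use that $t-u$ is odd), and compute the $\sqrt d$-coefficient of $m_a m_b$ by character orthogonality. The paper's computation is more hands-on and generalises mechanically to the even-$q_i^*$ theorems that follow; your argument is cleaner and makes the orthogonality relation transparent, at the cost of having to rephrase slightly when one $\alpha_i$ is $\sqrt2$ or $i$ rather than $\tfrac{1+\sqrt{q_i^*}}2$.
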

\begin{proof}
Let $\beta'_{s_1,\dots,s_t}$ be the element of the basis from second assertion of Theorem
\ref{el.fundbasisstart} corresponding to the set $(s_1,\dots,s_t)$.

A number from $\ro_{K_G}$ is in $K_G\cap\R$ if and only if it is invariant under the
complex conjugation. It is easy to see that the complex conjugation maps $\beta'_{s_1,\dots,s_t}$
to $\beta'_{s_1,\dots,s_u,1-s_{u+1},\dots,1-s_t}$. Thus, a $\Z$-linear combination of $\beta'_{s_1,\dots,s_t}$
is invariant if and only if coefficients of $\beta'_{s_1,\dots,s_t}$ and $\beta'_{s_1,\dots,s_u,1-s_{u+1},\dots,1-s_t}$
are equal for any set $(s_i)$. Now Theorem \ref{el.fundbasisstart} implies that
$\{\beta'_{s_1,\dots,s_{t-1},0}+\beta'_{s_1,\dots,s_u,1-s_{u+1},\dots,1-s_{t-1},1}\}$ is a
required basis. From the definition of $\beta'$ it is easy to see that this sum is equal to
$\beta_{s_1,\dots,s_{t-1}}$. This concludes the proof of the first assertion.

A number from $\ro_{K_G}$ is in $K_G\cap i\R$ if and only if it changes the sign under
the complex conjugation. Similarly to the first assertion, we obtain that
$\{\beta'_{s_1,\dots,s_{t-1},0}-\beta'_{s_1,\dots,s_u,1-s_{u+1},1-s_{t-1},1}\}$ is
a required basis. From the definition of $\beta'$ it is easy to see that this difference
is equal to $\pm\beta^*_{s_1,\dots,s_{t-1}}$. This concludes the proof of the second assertion.

The last assertion is checked by a direct calculation. It is easy to see that
\begin{multline*}
\tau_\mu\left(\beta_{\eta_1,\dots,\eta_{t-1}}\right)=
\left(\prod_{i=1}^u\tilde\alpha_i^{\mu_i\oplus\eta_i}\alpha_i^{1-(\mu_i\oplus\eta_i)}\right)\\
\times\left(
\left(\prod_{i=u+1}^{t-1}\tilde\alpha_i^{\mu_i\oplus\eta_i}\alpha_i^{1-(\mu_i\oplus\eta_i)}\right)\alpha_t+
\left(\prod_{i=u+1}^{t-1}\tilde\alpha_i^{1-(\mu_i\oplus\eta_i)}\alpha_i^{\mu_i\oplus\eta_i}\right)\tilde\alpha_t
\right),
\end{multline*}
\begin{multline*}
\tau_\mu\left(\beta^*_{\nu_1,\dots,\nu_{t-1}}\right)=
\left(\prod_{i=1}^u(-1)^{\nu_i}\tilde\alpha_i^{\mu_i\oplus\nu_i}\alpha_i^{1-(\mu_i\oplus\nu_i)}\right)\\
\times\left(
\left(\prod_{i=u+1}^{t-1}(-1)^{\nu_i}\tilde\alpha_i^{\mu_i\oplus\nu_i}\alpha_i^{1-(\mu_i\oplus\nu_i)}\right)\alpha_t-
\left(\prod_{i=u+1}^{t-1}(-1)^{\nu_i}\tilde\alpha_i^{1-(\mu_i\oplus\nu_i)}\alpha_i^{\mu_i\oplus\nu_i}\right)\tilde\alpha_t
\right).
\end{multline*}
Substitute these formulas to the product $\tau_\mu(\beta_{\eta_1,\dots,\eta_{t-1}})\tau_\mu(\beta^*_{\nu_1,\dots,\nu_{t-1}})$,
obtain the formula of the form $(a+b)(c-d)$. Expand it and obtain four operands $ac+bc-ad-bd$.
Let $\delta_{ij}$ be the Kronecker delta: $\delta_{ii}=1$, $\delta_{ij}=0$ if $i\ne j$.
Note that
\begin{multline*}
\sum_{\mu_i=0}^1(-1)^{\mu_i}(-1)^{\nu_i}\tilde\alpha_i^{(\mu_i\oplus\eta_i)+(\mu_i\oplus\nu_i)}\alpha_i^{1-(\mu_i\oplus\eta_i)+1-(\mu_i\oplus\nu_i)}\\
=(-1)^{\nu_i}\left(\tilde\alpha_i^{\eta_i+\nu_i}\alpha_i^{2-(\eta_i+\nu_i)}-\tilde\alpha_i^{2-(\eta_i+\nu_i)}\alpha_i^{\eta_i+\nu_i}\right)=
\delta_{\eta_i\nu_i}\left(\alpha_i^2-\tilde\alpha_i^2\right)\\
=\delta_{\eta_i\nu_i}\sqrt{q_i^*},
\end{multline*}
\begin{multline*}
\sum_{\mu_i=0}^1(-1)^{\mu_i}(-1)^{\nu_i}\tilde\alpha_i^{1-(\mu_i\oplus\eta_i)+(\mu_i\oplus\nu_i)}\alpha_i^{(\mu_i\oplus\eta_i)+1-(\mu_i\oplus\nu_i)}\\
=(-1)^{\nu_i}\left(\tilde\alpha_i^{1-\eta_i+\nu_i}\alpha_i^{1+\eta_i-\nu_i}-\tilde\alpha_i^{1+\eta_i-\nu_i}\alpha_i^{1-\eta_i+\nu_i}\right)=
\delta_{\eta_i+\nu_i,1}\left(\alpha_i^2-\tilde\alpha_i^2\right)\\
=\delta_{\eta_i+\nu_i,1}\sqrt{q_i^*}
\end{multline*}
and transposing of $\alpha_i$ with $\tilde\alpha_i$ gives two more products with values multiplied by $(-1)$.

Therefore,
\begin{multline*}
\sum_{\mu\in\{0,1\}^{t-1}}(-1)^{\mu_1+\ldots+\mu_{t-1}}\tau_\mu\left(\beta_{\eta_1,\dots,\eta_{t-1}}\beta^*_{\nu_1,\dots,\nu_{t-1}}\right)\\
=\left(\prod_{i=1}^u\delta_{\eta_i\nu_i}\sqrt{q_i^*}\right)
\Bigg(
\alpha_t^2\prod_{i=u+1}^{t-1}\delta_{\eta_i\nu_i}\sqrt{q_i^*}+
\tilde\alpha_t\alpha_t\prod_{i=u+1}^{t-1}\delta_{\eta_i+\nu_i,1}\sqrt{q_i^*}\\
-\alpha_t\tilde\alpha_t\prod_{i=u+1}^{t-1}\left(-\delta_{\eta_i+\nu_i,1}\sqrt{q_i^*}\right)
-\tilde\alpha_t^2\prod_{i=u+1}^{t-1}\left(-\delta_{\eta_i\nu_i}\sqrt{q_i^*}\right)
\Bigg).
\end{multline*}
The sign of the product $q_1^*\dots q_t^*$ is defined by the parity of the number of negative factors.
There are $t-u$ negative factors, so the inequality $q_1^*\dots q_t^*=d<0$ implies that
$t-u$ is odd and $\prod_{i=u+1}^{t-1}(-1)=(-1)^{t-u-1}=1$.
\begin{multline*}
\sum_{\mu\in\{0,1\}^{t-1}}(-1)^{\mu_1+\ldots+\mu_{t-1}}\tau_\mu\left(\beta_{\eta_1,\dots,\eta_{t-1}}\beta^*_{\nu_1,\dots,\nu_{t-1}}\right)\\
=\left(\prod_{i=1}^u\delta_{\eta_i\nu_i}\sqrt{q_i^*}\right)
\left(\alpha_t^2-\tilde\alpha_t^2\right)
\left(\prod_{i=u+1}^{t-1}\delta_{\eta_i\nu_i}\sqrt{q_i^*}\right)
=\sqrt{q_t^*}\prod_{i=1}^{t-1}\delta_{\eta_i\nu_i}\sqrt{q_i^*}.
\end{multline*}
\end{proof}

\begin{theorem}
Let $q_2^*,\dots,q_t^*$ be the same as in Theorem \ref{el.basiscapr}, and $q_1^*=8$.
Let $q_i^*$ be numbered so that $q_i^*>0$ for $1\le i\le u$ and $q_i^*<0$ for $u<i\le t$,
where $1\le u\le t-1$. Let $K_G=\Q\qtsqrt$. Let $\tau_\lambda$ be defined by \eqref{el.taudef}.
\begin{enumerate}
\item Define
\begin{multline*}
\beta_{s_1,\dots,s_{t-1}}=\beta_{s_1,\dots,s_{t-1}}(q_1^*,\dots,q_t^*)=\sqrt{2}^{s_1}
\left(\prod_{i=2}^u\tilde\alpha_i^{s_i}\alpha_i^{1-s_i}\right)\\
\times\left(
\left(\prod_{i=u+1}^{t-1}\tilde\alpha_i^{s_i}\alpha_i^{1-s_i}\right)\alpha_t+
\left(\prod_{i=u+1}^{t-1}\tilde\alpha_i^{1-s_i}\alpha_i^{s_i}\right)\tilde\alpha_t
\right).
\end{multline*}
The set $\{\beta_{s_1,\dots,s_{t-1}}:(s_1,\dots,s_{t-1})\in\{0,1\}^{t-1}\}$ is
a $\Z$-basis of the ring of integers in $K_G\cap\R$.
\item Define
\begin{multline*}
\beta^*_{s_1,\dots,s_{t-1}}=\beta^*_{s_1,\dots,s_{t-1}}(q_1^*,\dots,q_t^*)=\sqrt{2}^{1-s_1}
\left(\prod_{i=2}^u(-\tilde\alpha_i)^{s_i}\alpha_i^{1-s_i}\right)\\
\times\left(
\left(\prod_{i=u+1}^{t-1}(-\tilde\alpha_i)^{s_i}\alpha_i^{1-s_i}\right)\alpha_t-
\left(\prod_{i=u+1}^{t-1}\tilde\alpha_i^{1-s_i}(-\alpha_i)^{s_i}\right)\tilde\alpha_t
\right).
\end{multline*}
The set $\{\beta^*_{s_1,\dots,s_{t-1}}:(s_1,\dots,s_{t-1})\in\{0,1\}^{t-1}\}$ is
a $\Z$-basis of the $\Z$-module $\ro_{K_G}\cap i\R$.
\item For any $\eta,\nu\in\{0,1\}^{t-1}$
$$
\sum_{\mu\in\{0,1\}^{t-1}}(-1)^{\mu_1+\ldots+\mu_{t-1}}\tau_\mu\left(\beta_{\eta_1,\dots,\eta_{t-1}}\beta^*_{\nu_1,\dots,\nu_{t-1}}\right)
=\begin{cases}
\sqrt d,&\mbox{ if }\eta=\nu,\\
0,&\mbox{ otherwise.}\end{cases}
$$
\end{enumerate}
\end{theorem}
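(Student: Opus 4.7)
The plan is to adapt the proof of Theorem \ref{el.basiscapr}, modifying only those steps where the factor $\sqrt 2$ (which replaces the odd-prime generator at position $1$) behaves differently from the half-integer generators $\alpha_i = \frac{1+\sqrt{q_i^*}}{2}$ used for $i \ge 2$.

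Step one: invoke the basis theorem already established for $\tilde q_r = 8$, after reindexing so that $q_1^* = 8$ sits at position $1$. Its second assertion provides a $\Z$-basis of $\ro_{K_G}$ of the form
$$\beta'_{s_1,\dots,s_t} = \sqrt{2}^{s_1}\prod_{i=2}^t\tilde\alpha_i^{s_i}\alpha_i^{1-s_i},\qquad (s_1,\dots,s_t)\in\{0,1\}^t.$$

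Step two: describe the action of complex conjugation on the $\beta'_{s_1,\dots,s_t}$. The number $\sqrt 2$ is real and fixed; each $\alpha_i, \tilde\alpha_i$ is real and fixed for $2\le i\le u$ (where $q_i^* > 0$ forces $\tilde q_i > 0$); and $\alpha_i \leftrightarrow \tilde\alpha_i$ are swapped for $u < i \le t$. Hence a $\Z$-combination of the $\beta'_{s_1,\dots,s_t}$ lies in $K_G \cap \R$ (respectively $K_G \cap i\R$) iff the coefficients of $\beta'_{s_1,\dots,s_u,s_{u+1},\dots,s_t}$ and $\beta'_{s_1,\dots,s_u,1-s_{u+1},\dots,1-s_t}$ coincide (respectively are opposite). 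Fixing the convention $s_t = 0$ within each pair and computing the resulting sum (respectively difference) yields the elements $\beta_{s_1,\dots,s_{t-1}}$ and, up to an overall sign, $\beta^*_{s_1,\dots,s_{t-1}}$ from the statement. The sign twists $(-\tilde\alpha_i)^{s_i}$ and $(-\alpha_i)^{s_i}$ that appear in $\beta^*$ fall out of this algebra, and the exponent $1-s_1$ on $\sqrt 2$ (instead of $s_1$) is a harmless reindexing of the two basis vectors, chosen so that Part (3) takes its cleanest form.

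Step three: verify Part (3) by direct computation. For indices $2 \le i \le t-1$ the sums over $\mu_i$ are exactly those computed in the proof of Theorem \ref{el.basiscapr} and contribute $\delta_{\eta_i\nu_i}\sqrt{q_i^*}$; the terms involving $\alpha_t,\tilde\alpha_t$ combine as before into a factor $\alpha_t^2 - \tilde\alpha_t^2 = \sqrt{q_t^*}$ (using that $t-u$ is odd because $d<0$). The only new calculation is at position $1$: the product $\beta_\eta\beta^*_\nu$ contributes the scalar $\sqrt 2^{\eta_1+1-\nu_1}$, and since $\tau_1(\sqrt 2) = -\sqrt 2$,
$$\sum_{\mu_1\in\{0,1\}}(-1)^{\mu_1}\tau_1^{\mu_1}\bigl(\sqrt 2^{\eta_1+1-\nu_1}\bigr) = \begin{cases} 2\sqrt 2, & \eta_1=\nu_1,\\ 0, & \eta_1\ne\nu_1,\end{cases}$$
and the matched value $2\sqrt 2$ is exactly $\sqrt{q_1^*}$. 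Multiplying contributions from all positions gives either $0$ or $\prod_{i=1}^t \sqrt{q_i^*} = \sqrt d$, as claimed.

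The only real obstacle is bookkeeping: tracking signs and exponents through the definition of $\beta^*$ and checking they agree with the pairing produced by complex conjugation. Conceptually nothing is new beyond the odd-$q_i^*$ case treated in Theorem \ref{el.basiscapr}; in fact the $\sqrt 2$ slot is easier rather than harder, because $\sqrt 2$ is real and the action $\tau_1(\sqrt 2) = -\sqrt 2$ produces a clean cancellation without any $\alpha$/$\tilde\alpha$ expansion.
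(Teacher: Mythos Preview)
Your proposal is correct and follows essentially the same approach as the paper: the paper's proof simply states that ``the arguments are similar to Theorem~\ref{el.basiscapr}'' and then records only the new factor at position~$1$, computing $\sum_{\mu_1=0}^1(-1)^{\mu_1}\bigl((-1)^{\mu_1}\sqrt 2\bigr)^{\eta_1}\bigl((-1)^{\mu_1}\sqrt 2\bigr)^{1-\nu_1}=2\sqrt2\,\delta_{\eta_1\nu_1}$, which is exactly your Step~3 calculation. Your Steps~1--2 spell out in more detail what the paper leaves implicit, but the strategy (invoke the integral basis with $\tilde q_r=\pm8$, split under complex conjugation, then verify the pairing by direct computation) is identical.
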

\begin{proof}
The arguments are similar to Theorem \ref{el.basiscapr}. Calculating the
expression from the third assertion yields an additional factor
$$
\sum_{\mu_1=0}^1(-1)^{\mu_1}\left((-1)^{\mu_1}\sqrt 2\right)^{\eta_1}
\left((-1)^{\mu_1}\sqrt 2\right)^{1-\nu_1}
=\sqrt{2}^{1+\eta_1-\nu_1}\left(1+(-1)^{\eta_1+\nu_1}\right)
=2\sqrt2\delta_{\eta_1\nu_1}.
$$
\end{proof}

\begin{theorem}
Let $q_1^*,\dots,q_{t-1}^*$ be the same as in Theorem \ref{el.basiscapr}, and
$q_t^*\in\{-4,-8\}$. Let $q_i^*$ be numbered so that $q_i^*>0$ for $1\le i\le u$
and $q_i^*<0$ for $u<i\le t$, where $0\le u\le t-2$. Let $K_G=\Q\qtsqrt$.
Let $\tau_\lambda$ be defined by \eqref{el.taudef}.
\begin{enumerate}
\item Define
\begin{multline*}
\beta_{s_1,\dots,s_{t-1}}=\beta_{s_1,\dots,s_{t-1}}(\sqrt{q_1^*},\dots,\sqrt{q_t^*})=
\left(\prod_{i=1}^u\tilde\alpha_i^{s_i}\alpha_i^{1-s_i}\right)\\ \times
\Bigg(
\left(\prod_{i=u+1}^{t-2}\tilde\alpha_i^{s_i}\alpha_i^{1-s_i}\right)\alpha_{t-1}\alpha_t^{s_{t-1}}
+\left(\prod_{i=u+1}^{t-2}\tilde\alpha_i^{1-s_i}\alpha_i^{s_i}\right)\tilde\alpha_{t-1}(-\alpha_t)^{s_{t-1}}
\Bigg).
\end{multline*}
The set $\{\beta_{s_1,\dots,s_{t-1}}:(s_1,\dots,s_{t-1})\in\{0,1\}^{t-1}\}$ is
a $\Z$-basis of the ring of integers in $K_G\cap\R$.
\item Define
\begin{multline*}
\beta^*_{s_1,\dots,s_{t-1}}=\beta^*_{s_1,\dots,s_{t-1}}(\sqrt{q_1^*},\dots,\sqrt{q_t^*})=
\left(\prod_{i=1}^u(-\tilde\alpha_i)^{s_i}\alpha_i^{1-s_i}\right)\\ \times
\Bigg(
\left(\prod_{i=u+1}^{t-2}(-\tilde\alpha_i)^{s_i}\alpha_i^{1-s_i}\right)\alpha_{t-1}\alpha_t^{1-s_{t-1}}
-\left(\prod_{i=u+1}^{t-2}\tilde\alpha_i^{1-s_i}(-\alpha_i)^{s_i}\right)\tilde\alpha_{t-1}(-\alpha_t)^{1-s_{t-1}}
\Bigg).
\end{multline*}
The set $\{\beta^*_{s_1,\dots,s_{t-1}}:(s_1,\dots,s_{t-1})\in\{0,1\}^{t-1}\}$ is
a $\Z$-basis of the $\Z$-module $\ro_{K_G}\cap i\R$.
\item For any $\eta,\nu\in\{0,1\}^{t-1}$
$$
\sum_{\mu\in\{0,1\}^{t-1}}(-1)^{\mu_1+\ldots+\mu_{t-1}}\tau_\mu\left(\beta_{\eta_1,\dots,\eta_{t-1}}\beta^*_{\nu_1,\dots,\nu_{t-1}}\right)
=\begin{cases}
\sqrt d,&\mbox{ if }\eta=\nu,\\
0,&\mbox{ otherwise.}\end{cases}
$$
\end{enumerate}
\end{theorem}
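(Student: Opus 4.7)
The plan is to mimic the proof of Theorem~\ref{el.basiscapr}, the only substantive new feature being that $\alpha_t$ is purely imaginary (since $q_t^*<0$) rather than real. First I would invoke Theorem~\ref{el.fundbasisend} (when $q_t^*=-4$) or the preceding theorem (when $q_t^*=-8$) to obtain the $\Z$-basis
$$
\beta'_{s_1,\dots,s_t}=\tilde\alpha_1^{s_1}\alpha_1^{1-s_1}\dots\tilde\alpha_{t-1}^{s_{t-1}}\alpha_{t-1}^{1-s_{t-1}}\alpha_t^{s_t},\qquad(s_1,\dots,s_t)\in\{0,1\}^t,
$$
of $\ro_{K_G}$. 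Complex conjugation fixes $\alpha_i,\tilde\alpha_i$ for $1\le i\le u$, swaps $\alpha_i\leftrightarrow\tilde\alpha_i$ for $u<i\le t-1$, and sends $\alpha_t\mapsto-\alpha_t$; on the basis it acts as $\beta'_{s_1,\dots,s_t}\mapsto(-1)^{s_t}\beta'_{s_1,\dots,s_u,1-s_{u+1},\dots,1-s_{t-1},s_t}$.

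For the first assertion, the ring of integers in $K_G\cap\R$ consists of invariants of this involution. Picking orbit representatives with $s_{t-1}=0$, the invariants are spanned over $\Z$ by the sums $\beta'_{\dots,0,0}+\beta'_{\dots,1,0}$ (when $s_t=0$) and by the differences $\beta'_{\dots,0,1}-\beta'_{\dots,1,1}$ (when $s_t=1$). Reindexing the old $s_t$-bit as the new $s_{t-1}$-index, a direct expansion identifies these sums and differences with the stated $\beta_{s_1,\dots,s_{t-1}}$; the factor $\alpha_t^{s_{t-1}}$ together with $(-\alpha_t)^{s_{t-1}}$ encodes the choice of sum versus difference. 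For the second assertion, $\ro_{K_G}\cap i\R$ is the module of anti-invariants, obtained by the same procedure with sums and differences interchanged; this gives $\beta^*_{s_1,\dots,s_{t-1}}$ up to an overall sign $(-1)^{s_1+\dots+s_{t-2}}$ that is absorbed into the $(-\tilde\alpha_i)^{s_i}$ and $(-\alpha_i)^{s_i}$ factors in the statement.

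For the third assertion I would repeat the direct computation at the end of the proof of Theorem~\ref{el.basiscapr}. Because each $\tau_i$ with $i<t$ fixes $\sqrt{q_t^*}$, the operator $\tau_\mu$ never acts on $\alpha_t$, and the partial sums $\sum_{\mu_i=0}^1(-1)^{\mu_i}\tau_i$ for $1\le i\le t-2$ produce $\delta_{\eta_i\nu_i}\sqrt{q_i^*}$ (or $\delta_{\eta_i+\nu_i,1}\sqrt{q_i^*}$ on the transposed products) exactly as in the proof of Theorem~\ref{el.basiscapr}. The new ingredient is the joint treatment of the pair $(\alpha_{t-1},\alpha_t)$: expanding the bilinear form of $\beta_\eta\beta^*_\nu$ yields four terms, and then $\sum_{\mu_{t-1}}(-1)^{\mu_{t-1}}\tau_{t-1}$ kills the cross terms involving $\alpha_{t-1}\tilde\alpha_{t-1}\in\Q$ and converts the pure $\alpha_{t-1}^2$ and $\tilde\alpha_{t-1}^2$ terms into $\pm\sqrt{q_{t-1}^*}$. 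The $\alpha_t$-dependent factor then reduces to $\alpha_t^{\eta_{t-1}+1-\nu_{t-1}}$ combined (with the signs dictated by the minus sign before $\tilde\alpha_{t-1}(\cdot)$ in $\beta^*$) with $(-\alpha_t)^{\eta_{t-1}+1-\nu_{t-1}}$; case analysis on $(\eta_{t-1},\nu_{t-1})$ gives $0$ off-diagonal and a multiple of $\alpha_t^2=q_t^*/4$ on-diagonal, the numerical factor combining with the other pieces to produce exactly $\sqrt{q_t^*}$.

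The hard part will be the sign- and index-bookkeeping in the four-term expansion of the third assertion: the asymmetric roles of $\alpha_t^{s_{t-1}}$ in $\beta$ versus $\alpha_t^{1-s_{t-1}}$ in $\beta^*$, together with the sign convention attached to $\tilde\alpha_{t-1}(\cdot)$ in $\beta^*$, are arranged precisely so that the diagonal entries accumulate to $\sqrt{q_t^*}\prod_{i<t}\sqrt{q_i^*}=\sqrt d$ while every off-diagonal entry cancels. The remaining assertions are straightforward generalizations of Theorem~\ref{el.basiscapr}.
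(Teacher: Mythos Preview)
Your proposal is correct and follows essentially the same route as the paper: invoke the integral basis $\beta'_{s_1,\dots,s_t}$ from Theorem~\ref{el.fundbasisend} (or its $\pm8$ analogue), read off the action of complex conjugation as $\beta'_{s_1,\dots,s_t}\mapsto(-1)^{s_t}\beta'_{s_1,\dots,s_u,1-s_{u+1},\dots,1-s_{t-1},s_t}$, form invariants and anti-invariants by pairing orbit representatives with $s_{t-1}=0$, and then verify the duality relation by the same four-term expansion and index-by-index summation used in Theorem~\ref{el.basiscapr}. One small slip in your sketch of part~3: on the diagonal $\eta_{t-1}=\nu_{t-1}$ the exponent $\eta_{t-1}+1-\nu_{t-1}$ equals~$1$, so the surviving $\alpha_t$-factor is $2\alpha_t=\sqrt{q_t^*}$, not a multiple of $\alpha_t^2$; with this correction the final product $\sqrt{q_t^*}\prod_{i<t}\sqrt{q_i^*}=\sqrt d$ drops out immediately, exactly as in the paper.
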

\begin{proof}
Let $\beta'_{s_1,\dots,s_t}$ be the element of the basis from second assertion of Theorem
\ref{el.fundbasisend} corresponding to the set $(s_1,\dots,s_t)$.

A number from $\ro_{K_G}$ is in $K_G\cap\R$ if and only if it is invariant under the
complex conjugation. It is easy to see that the complex conjugation maps $\beta'_{s_1,\dots,s_t}$
to $(-1)^{s_t}\beta'_{s_1,\dots,s_u,1-s_{u+1},\dots,1-s_{t-1},s_t}$.
Thus, a $\Z$-linear combination of $\beta'_{s_1,\dots,s_t}$
is invariant if and only if coefficients of $\beta'_{s_1,\dots,s_t}$ and $\beta'_{s_1,\dots,s_u,1-s_{u+1},\dots,1-s_{t-1},s_t}$
are the same for $s_t=0$ and differ in the sign for $s_t=1$. Now Theorem \ref{el.fundbasisend} implies that
$\{\beta'_{s_1,\dots,s_{t-2},0,s_t}+(-1)^{s_t}\beta'_{s_1,\dots,s_u,1-s_{u+1},\dots,1-s_{t-2},1,s_t}\}$ is a
required basis. From the definition of $\beta'$ it is easy to see that this sum is equal to
$\beta_{s_1,\dots,s_{t-2},s_t}$. This concludes the proof of the first assertion.

The second assertion is proved similarly to the first one.

The third assertion is checked by a direct calculation. Similar to the proof of Theorem \ref{el.basiscapr}
we obtain
\begin{multline*}
\sum_{\mu\in\{0,1\}^{t-1}}(-1)^{\mu_1+\ldots+\mu_{t-1}}\tau_\mu\left(\beta_{\eta_1,\dots,\eta_{t-1}}\beta^*_{\nu_1,\dots,\nu_{t-1}}\right)\\
=\left(\prod_{i=1}^u\delta_{\eta_i\nu_i}\sqrt{q_i^*}\right)
\alpha_t^{\eta_{t-1}+1-\nu_{t-1}}\Bigg(
\sqrt{q_{t-1}^*}\prod_{i=u+1}^{t-2}\delta_{\eta_i\nu_i}\sqrt{q_i^*}\\
+(-1)^{\nu_{t-1}+\eta_{t-1}}(-\sqrt{q_{t-1}^*})\prod_{i=u+1}^{t-2}\left(-\delta_{\eta_i\nu_i}\sqrt{q_i^*}\right)
\Bigg).
\end{multline*}
Since $q_1^*\dots q_t^*<0$, the number of negative $q_i^*$ (i.e. $t-u$) is odd. Therefore,
$\prod_{i=u+1}^{t-2}(-1)=(-1)^{t-u-2}=-1$.
\begin{multline*}
\sum_{\mu\in\{0,1\}^{t-1}}(-1)^{\mu_1+\ldots+\mu_{t-1}}\tau_\mu\left(\beta_{\eta_1,\dots,\eta_{t-1}}\beta^*_{\nu_1,\dots,\nu_{t-1}}\right)\\
=\left(\prod_{i=1}^{t-2}\delta_{\eta_i\nu_i}\sqrt{q_i^*}\right)
(1+(-1)^{\nu_{t-1}+\eta_{t-1}})\alpha_t^{1+\eta_{t-1}-\nu_{t-1}}\\
=\delta_{\eta_{t-1}\nu_{t-1}}\left(\prod_{i=1}^{t-2}\delta_{\eta_i\nu_i}\sqrt{q_i^*}\right)2\alpha_t.
\end{multline*}
\end{proof}

\begin{theorem}\label{el.basiscaprend}
Let $q_1^*,\dots,q_{t-1}^*$ are positive odd, $q_t^*=-4$ or $q_t^*=-8$.
\begin{enumerate}
\item Define
$$
\beta_{s_1,\dots,s_{t-1}}=\beta_{s_1,\dots,s_{t-1}}(q_1^*,\dots,q_t^*)
=\prod_{i=1}^{t-1}\tilde\alpha_i^{s_i}\alpha_i^{1-s_i}.
$$
The set $\{\beta_{s_1,\dots,s_{t-1}}:(s_1,\dots,s_{t-1})\in\{0,1\}^{t-1}\}$ is
a $\Z$-basis of the ring of integers in $K_G\cap\R$.
\item Define
$$
\beta^*_{s_1,\dots,s_{t-1}}=\beta^*_{s_1,\dots,s_{t-1}}(q_1^*,\dots,q_t^*)
=\left(\prod_{i=1}^{t-1}(-\tilde\alpha_i)^{s_i}\alpha_i^{1-s_i}\right)\sqrt{q_t^*}.
$$
The set $\{\beta^*_{s_1,\dots,s_{t-1}}:(s_1,\dots,s_{t-1})\in\{0,1\}^{t-1}\}$ is
a $\Z$-basis of the $\Z$-module $\ro_{K_G}\cap i\R$.
\item For any $\eta,\nu\in\{0,1\}^{t-1}$
$$
\sum_{\mu\in\{0,1\}^{t-1}}(-1)^{\mu_1+\ldots+\mu_{t-1}}\tau_\mu\left(\beta_{\eta_1,\dots,\eta_{t-1}}\beta^*_{\nu_1,\dots,\nu_{t-1}}\right)
=\begin{cases}
\sqrt d,&\mbox{ if }\eta=\nu,\\
0,&\mbox{ otherwise.}\end{cases}
$$
\end{enumerate}
\end{theorem}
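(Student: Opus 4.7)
The idea is to reduce each of the three assertions to ring-of-integers theorems already established and a direct computation, exploiting the structural simplification that here $u=t-1$: the only complex generator is $\sqrt{q_t^*}$, so $K_G=(K_G\cap\R)(\sqrt{q_t^*})$ is a quadratic extension and the real and imaginary parts do not mix across the various $q_i^*$.

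For the first assertion I would observe that $K_G\cap\R=\Q(\sqrt{q_1^*},\dots,\sqrt{q_{t-1}^*})$, since $q_t^*$ is the only non-positive generator. The data $(q_1^*,\dots,q_{t-1}^*)$ consists of signed odd primes $\equiv1\pmod4$, so Theorem~\ref{el.fundbasisstart} applies directly with $r=t-1$ and $\tilde q_i=q_i^*$. Its second assertion produces exactly the set $\{\beta_{s_1,\dots,s_{t-1}}\}$ as a $\Z$-basis of $\ro_{K_G\cap\R}$.

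For the second assertion I would start from a $\Z$-basis of $\ro_{K_G}$ provided by Theorem~\ref{el.fundbasisend} (when $q_t^*=-4$) or the preceding theorem (when $q_t^*=-8$). In either case the basis consists of products $\prod_{i=1}^{t-1}\tilde\alpha_i^{s_i}\alpha_i^{1-s_i}\cdot\alpha_t^{s_t}$ with $\alpha_t=\sqrt{q_t^*/4}$. Since each $\alpha_i,\tilde\alpha_i$ with $i<t$ is real while $\alpha_t$ is purely imaginary, complex conjugation fixes such a basis element when $s_t=0$ and negates it when $s_t=1$. Consequently $\ro_{K_G}\cap i\R$ is the $\Z$-span of the $s_t=1$ basis elements; rewriting these elements (absorbing the sign twists $(-1)^{s_i}$, which do not change the $\Z$-span, and re-expressing the trailing factor in terms of $\sqrt{q_t^*}$) yields the asserted set $\{\beta^*_{s_1,\dots,s_{t-1}}\}$.

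For the third assertion I would carry out the direct calculation in the style used at the end of the proof of Theorem~\ref{el.basiscapr}. The automorphism $\tau_\mu$ swaps $\alpha_i\leftrightarrow\tilde\alpha_i$ for precisely those $i<t$ with $\mu_i=1$, and fixes $\sqrt{q_t^*}$. Expanding $\tau_\mu(\beta_\eta\beta^*_\nu)$ makes the sum over $\mu\in\{0,1\}^{t-1}$ factor as a product of per-index sums; each per-index sum computes to $\delta_{\eta_i\nu_i}\sqrt{q_i^*}$ via the identity $\alpha_i^2-\tilde\alpha_i^2=\sqrt{q_i^*}$ together with the $(-1)^{\nu_i}$ sign contributed by $\beta^*$. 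Combined with the untouched trailing factor $\sqrt{q_t^*}$ one obtains $\prod_{i=1}^{t-1}\delta_{\eta_i\nu_i}\cdot\prod_{i=1}^t\sqrt{q_i^*}$, equal to $\sqrt d$ when $\eta=\nu$ and to $0$ otherwise.

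The step I expect to be the main obstacle is the bookkeeping inside the second assertion: one has to verify carefully that the sign twists $(-1)^{s_i}$ together with the choice to write $\sqrt{q_t^*}$ rather than $\alpha_t$ as the trailing factor really yield a $\Z$-basis of $\ro_{K_G}\cap i\R$, and not merely a proper sublattice. The remaining assertions are essentially immediate from the earlier theorems and straightforward expansion.
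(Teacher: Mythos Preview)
Your handling of assertions 1 and 3 is exactly the paper's: the paper also notes $K_G\cap\R=M:=\Q(\sqrt{q_1^*},\dots,\sqrt{q_{t-1}^*})$ and cites Theorem~\ref{el.fundbasisstart}, and it defers assertion 3 to ``a direct calculation similar to the one from the proof of Theorem~\ref{el.basiscapr}'', precisely the per-index factorisation you describe.

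For assertion 2 the paper is more direct than you: rather than starting from a full $\Z$-basis of $\ro_{K_G}$ and extracting the part anti-invariant under complex conjugation, it simply records $K_G\cap i\R=\sqrt{q_t^*}\cdot M$ and again invokes Theorem~\ref{el.fundbasisstart}. Both routes reach the same checkpoint, and the concern you flag is genuine in both. Since $\alpha_t=\sqrt{q_t^*/4}$, one has $\sqrt{q_t^*}=2\alpha_t$, so the $s_t=1$ layer of the $\ro_{K_G}$-basis is $\alpha_t\ro_M$, whereas the stated set $\{\beta^*_{s}\}$ spans $\sqrt{q_t^*}\,\ro_M=2\alpha_t\ro_M$, a proper sublattice of index $2^{2^{t-1}}$ in $\ro_{K_G}\cap i\R=\alpha_t\ro_M$. (Already for $t=1$, $q_t^*=-4$: the module $\ro_{K_G}\cap i\R$ is $i\Z$, but $\beta^*=\sqrt{-4}=2i$.) The paper's one-line justification does not address this discrepancy; as literally stated, assertion 2 is off by this factor of $2$ in the trailing term. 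This does not affect assertion 3, where the factor $\sqrt{q_t^*}$ is exactly what is needed to assemble $\sqrt d=\prod_i\sqrt{q_i^*}$, nor the subsequent corollaries, in which the $\beta^*_\mu$ enter only through the ratios $\beta^*_\mu/\beta^*_{0,\dots,0}$ and the extra $2$ cancels.
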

\begin{proof}
Obviously, here $K_G=M(\sqrt{q_t^*})$ with $M\subset\R$. Thus $K_G\cap\R=M$, $K_G\cap i\R=\sqrt{q_t^*}\cdot M$.
First two assertions follow from Theorem \ref{el.fundbasisstart}. The last assertion is checked by
a direct calculation similar to the one from the proof of Theorem \ref{el.basiscapr}.
\end{proof}

For convenience, we denote $\beta_\mu=\beta_{\mu_1,\dots,\mu_{t-1}}$ for $\mu\in\{0,1\}^{t-1}$.
Let $\mathcal M$ denote the field $K_G\cap\R$.
The set $\{\beta_\mu\}$ is a $\Z$-basis of $\ro_{\mathcal M}$.

Let $z$ be any element of $\ro_{K_G}$. Since $z\in K_G$, also $\overline z\in K_G$ and
$z+\overline z=2\re z\in K_G\cap\R$ and $z-\overline z=2i\im z\in K_G\cap i\R$. Moreover,
$z$ and $\overline z$ are algebraic integers, so $2\re z$ and $2\im z$ are algebraic
integers too. Thus, $2\re z=\sum_\mu b_\mu\beta_\mu$ and $2i\im z=\sum_\mu b'_\mu\beta^*_\mu$.
Hereafter sums with parameter given by a greek letter without an explicit range is assumed
to be over $\{0,1\}^{t-1}$. We want to find the integer numbers $b_\mu$ and $b'_\mu$ by
an approximate values of these sums. The numbers $\beta_\mu$ form a basis of a real field
$\mathcal M$. The basis $\beta'_\mu$ is pure imaginary and becomes a basis of the same field
$\mathcal M$ after dividing e.g. by $\sqrt{q_t^*}\in K_G$, $q_t^*<0$. Thus, to find an
exact expression for $z$ by an approximate value, it is sufficient to solve the next task:
restore the coefficients of the decomposition of a number given by a sufficiently accurate
approximation, by a real basis.

The scheme of next sections is following.
\begin{itemize}
\item Consider a divisor of the polynomial $H_D[\theta,\alpha_*]$ over the field $K_G$.
The degree of this divisor is $\frac h{2^{t-1}}$. Section \ref{divider} deals with
this task. The ultimate goal is to use
this divisor instead of the full polynomial, thus decreasing the number and
the magnitude of coefficients to be calculated.
\item Calculate an apriori upper bound for all conjugates to coefficients of the divisor.
This is done in Section \ref{el.upbound}.
\item The main idea for calculating exact values is to use simultaneous rational approximations
to the elements of a basis. Section \ref{el.simapprox} shows how to construct such
approximations for $\beta_\mu$ and $\beta^*_\mu$ with any predefined precision. The actual
precision depends on the bound from Section \ref{el.upbound}.
\item Finally, Section \ref{el.get} shows how to calculate exact values by
approximations. Also Section \ref{el.get} sums up all the steps used in
our optimization.
\end{itemize}

\section{Divisor of $H_D[\theta,\alpha_*](x)$}\label{divider}
Let $\tilde{\mathfrak a}\in\mathcal H_D$. Select a form $(A,B,C)$ such that
$\mathfrak h(A,B,C)=\tilde{\mathfrak a}$ and $\gcd(A,D)=1$; this is possible
because $\mathfrak h$ depends only on the equivalence class of a form and
each class contains a form $(A,B,C)$ with $\gcd(A,D)=1$ due to \cite[Lemmas 2.25 and 2.3]{cox}.
Let $\varphi:\mathcal H_D\to\{\pm1\}^t$ be the map defined by the formula
$$
\varphi(\tilde{\mathfrak a})=\Bigg(\left(\frac{q_1^*}{A}\right),\dots,\left(\frac{q_t^*}{A}\right)\Bigg).
$$
This definition is correct because the Artin map depends only on an ideal class in
$I(\ro,f)/P_{K,\Z}(f)$ and Theorem \ref{el.artinaction} implies that $\left(\frac{q_i^*}A\right)$
does not change when a form $(A,B,C)$ is replaced to an equivalent form.

\begin{theorem}\label{el.phiim}
The image of the map $\varphi$ is the group $\{(\varepsilon_1,\dots,\varepsilon_t)\in\{\pm1\}^t:
\prod_i\varepsilon_i=1\}$. The map $\varphi$ is a group homomorphism. The fixed field
$L^{\Omega(\Ker\varphi)}=\{x\in L:\tau(x)=x\mbox{ for all }\tau\in\Omega(\Ker\varphi)\}$
is $K\qtsqrt$.
\end{theorem}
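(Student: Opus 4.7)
The plan is to factor $\varphi$ through the natural sign-action of $\Gal(L/K)$ on the square roots $\sqrt{q_i^*}$, and then reduce everything to the computation $[K_G:K]=2^{t-1}$.

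First I would introduce the auxiliary map $\psi:\Gal(L/K)\to\{\pm1\}^t$ whose $i$th coordinate is $\tau(\sqrt{q_i^*})/\sqrt{q_i^*}$; this is well-defined because $\sqrt{q_i^*}\in L$ by Theorem~\ref{el.artinaction}(1), and each coordinate is a homomorphism since $\tau$ permutes the two-element set $\{\pm\sqrt{q_i^*}\}$ by a sign. Applying Theorem~\ref{el.artinaction}(3) to a form $(A,B,C)$ with $\gcd(A,D)=1$ representing $\tilde{\mathfrak a}$, together with the construction of $\Omega$ out of $\Omega_1$, $\Omega_2$ and the Artin map (diagram~\eqref{el.omegadef}), I get that $\Omega(\tilde{\mathfrak a})$ multiplies $\sqrt{q_i^*}$ by $\left(\frac{q_i^*}{A}\right)$. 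Hence $\varphi=\psi\circ\Omega$ as maps $\mathcal H_D\to\{\pm1\}^t$, and in particular $\varphi$ is a group homomorphism.

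For the image, note that $\sqrt{q_1^*}\cdots\sqrt{q_t^*}=\pm\sqrt d\in K$ is fixed by all of $\Gal(L/K)$, so the product of the coordinates of $\psi(\tau)$ is always $1$; thus $\varphi(\mathcal H_D)$ is contained in the sum-zero subgroup $\{\varepsilon:\prod_i\varepsilon_i=1\}$, which has order $2^{t-1}$. Tautologically $\Ker\psi$ is the subgroup fixing every $\sqrt{q_i^*}$, i.e.\ $\Gal(L/K_G)$; hence $|\mathrm{Im}\,\psi|=|\mathrm{Im}\,\varphi|=[K_G:K]$ by the Galois correspondence, and surjectivity of $\varphi$ onto the sum-zero subgroup reduces to the equality $[K_G:K]=2^{t-1}$.

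The degree computation is the main obstacle. Since $\sqrt d\in K$, we can write $K_G=K(\sqrt{q_1^*},\dots,\sqrt{q_{t-1}^*})$, and I must show that the classes of $q_1^*,\dots,q_{t-1}^*$ in $K^*/(K^*)^2$ are independent. Using the standard fact that for $a\in\Q^*$ the element $a$ is a square in $K=\Q(\sqrt d)$ iff $a$ is a rational square or $a/d$ is a rational square, it suffices to show that for any nonempty $S\subset\{1,\dots,t-1\}$ the product $P_S=\prod_{i\in S}q_i^*$ is neither. The first possibility fails because pairwise coprimality guarantees that $P_S$ has a prime factor appearing to an odd exponent. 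The second fails because $P_S=d\cdot c^2$ would force $\prod_{j\notin S}q_j^*$ to be a rational square, contradicting the same argument (the complement is nonempty since it contains $t$).

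Finally, the third assertion follows from the Galois correspondence: from $\varphi=\psi\circ\Omega$ we get $\Omega(\Ker\varphi)=\Ker\psi=\Gal(L/K_G)$, and therefore $L^{\Omega(\Ker\varphi)}=K_G=K\qtsqrt$.
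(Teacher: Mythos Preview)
Your argument is correct and structurally the same as the paper's: both factor $\varphi$ through the sign-action of $\Gal(L/K)$ on the $\sqrt{q_i^*}$ via Theorem~\ref{el.artinaction}(3), both get the containment of the image from $\sqrt d\in K$, and both finish by a degree count using the Galois correspondence.

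The only real difference is how the key equality $[K_G:K]=2^{t-1}$ is obtained. The paper simply quotes $[K_G:\Q]=2^t$ from Section~\ref{el.integers}, where it falls out of the explicit integral bases of Theorems~\ref{el.fundbasisstart}--\ref{el.fundbasisend}. You instead give a self-contained Kummer argument showing that $q_1^*,\dots,q_{t-1}^*$ are independent in $K^*/(K^*)^2$. Your route is more elementary and independent of the heavier basis computations; the paper's route is shorter given that Section~\ref{el.integers} is already in place. One small wording issue: your sentence ``pairwise coprimality guarantees that $P_S$ has a prime factor appearing to an odd exponent'' is not literally true when $S$ consists solely of the index with $q_i^*=-4$, since $-4=-2^2$. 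The conclusion still holds because $-4$ (and likewise $-8$) is negative, hence not a rational square; just add the sign observation to cover that case.
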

\begin{proof}
The assertion 3 of Theorem \ref{el.artinaction} and the fact that the Artin map
is a homomorphism imply that $\varphi$ is a homomorphism.

Let $\mathfrak a$ be the ideal for the form $(A,B,C)$ defined in Theorem \ref{el.artinaction}.
We have
$$
\left(\frac{q_i^*}{A}\right)=\frac1{\sqrt{q_i^*}}\left(\frac{L/K}{\Omega_1(\mathfrak a)}\right)(\sqrt{q_i^*}).
$$
Multiplying over all $i$ and using Lemma \ref{el.qdef}, we obtain
$$
\left(\frac{q_1^*}{A}\right)\ldots\left(\frac{q_t^*}{A}\right)=\frac1{\sqrt d}
\left(\frac{L/K}{\Omega_1(\mathfrak a)}\right)(\sqrt d).
$$
Since $\sqrt d\in K$ and $\left(\frac{L/K}{\Omega_1(\mathfrak a)}\right)$ is an element
of $\Gal(L/K)$, the right-hand side equals $1$. This proves the inclusion
of image of $\varphi$ to $\{(\varepsilon_i)\in\{\pm1\}^t:\prod_i\varepsilon_i=1\}$.

Let $\tilde{\mathfrak a}$ lie in the kernel of $\varphi$ (i.e. $\varphi(\tilde{\mathfrak a})=(1,\dots,1)$).
Let $\mathfrak a$ be the representative of $\tilde{\mathfrak a}$ from the assertion 2 of Theorem \ref{el.artinaction}.
Then
$$
\left(\frac{L/K}{\Omega_1(\mathfrak a)}\right)(\sqrt{q_i^*})=\sqrt{q_i^*}.
$$
Equivalently, the image of $\Omega_1(\mathfrak a)$ under the Artin map acts trivially
on all $\sqrt{q_i^*}$. Due to the commutativity of the diagram \eqref{el.omegadef}
this image equals $\Omega(\tilde{\mathfrak a})$. This proves the inclusion
$K\qtsqrt\subset L^{\Omega(\Ker\varphi)}$.

According to Galois theory, $\Gal(L^{\Omega(\Ker\varphi)}/K)\cong\Gal(L/K)/\Omega(\Ker\varphi)
\cong\mathcal H_D/\Ker\varphi\cong\im\varphi$. In particular, $[L^{\Omega(\Ker\varphi)}:K]=|\im\varphi|\le2^{t-1}$.
We proved in Section \ref{el.integers} that $[K\qtsqrt:\Q]=2^t$, so $[K\qtsqrt:K]=2^{t-1}$.
Thus, the chain of inequalities $[K\qtsqrt:K]\le[L^{\Omega(\Ker\varphi)}:K]=|\im\varphi|\le2^{t-1}$
is possible only if $|\im\varphi|=2^{t-1}$ and $K\qtsqrt=L^{\Omega(\Ker\varphi)}$.
\end{proof}

We suggest to calculate the polynomial
\begin{equation}\label{el.hdused}
\hat H_D[\theta,\alpha_*](x)=\prod_{i:\varphi(\mathfrak h(A_i,B_i,C_i))=(1,\dots,1)}(x-\theta(\alpha_i)),
\end{equation}
which obviously divides $H_D[\theta,\alpha_*]$, instead of the entire polynomial $H_D[\theta,\alpha_*]$.
Here the function $\theta$ and the $N$-system $\{(A_i,B_i,C_i)\}$ satisfy the assumption of one of
Theorems \ref{el.t1}--\ref{el.t3}, and $\alpha_i$ is the root of the form $(A_i,B_i,C_i)$.

The main obstacle is that $\hat H_D[\theta,\alpha_*]$ is not invariant under $\Gal(L/K)$
and therefore does not lie in $\Q[x]$. Note that $\varphi$ is a homomorphism. Using
the formula \eqref{el.galaction2}, it is easy to see that $\Omega(\Ker\varphi)$
fixes $\hat H_D[\theta,\alpha_*](x)$, therefore, this polynomial has coefficients in $K_G$.
All numbers $\theta(\alpha)$ are algebraic integers (Theorems \ref{el.t1}--\ref{el.t3}),
so the coefficients of $\hat H_D[\theta,\alpha_*]$ are also algebraic integers. Therefore,
to use the polynomial $\hat H_D[\theta,\alpha_*]$ in the complex multiplication method,
one must know how to recover an algebraic integer from $K_G$ by its sufficiently accurate
approximation. Assuming that such a procedure is implemented, the other actions to
generate an elliptic curve are the same as in the original method.

An idea to use the genus field in the CM method was already considered in \cite{atkinmorain} (1993).
There the main obstacle for an algebraic integer $z$ is solved in the following way.
All conjugates of $z$ are calculated. One looks for the exact value of $z$ in the
form of linear combination of some generators with unknown coefficients. Any conjugate of $z$
is a linear combination of conjugates to generators with the same unknown coefficients.
The known approximations for all conjugates give a system of linear equations for these coefficients,
it allows to calculate them (approximately and then round to integer). We refer to
\cite{atkinmorain} for the details. Note that this solution requires to calculate
values $\theta(\alpha_i)$ for roots of all elements of a $N$-system and
all conjugate polynomials to $\hat H_D[\theta,\alpha_*]$. Thus the optimization is only
in the magnitude of the coefficients.

Our approach requires to calculate only the polynomial $\hat H_D[\theta,\alpha_*]$
itself (although with a greater precision); in particular, it is sufficient to
know only values $\theta(\alpha_i)$ for roots of forms $\mathfrak a$ with
$\varphi(\mathfrak h(\mathfrak a))=(1,\dots,1)$. Theorem \ref{el.phiim}
obviously implies that the number of these forms is $2^{t-1}$ times less than
size of the $N$-system.

\section{Bound for coefficients of $\hat H_D[\theta,\alpha_*]$}\label{el.upbound}
According to Theorems \ref{el.basiscapr}--\ref{el.basiscaprend}, each coefficient
of the polynomial $\hat H_D[\theta,\alpha_*]$ can be represented with
a formula
$\frac12\left(\sum_\mu b_\mu\beta_\mu+\sum_\mu b'_\mu\beta^*_\mu\right)$,
where $b_\mu,b'_\mu\in\Z$, $\beta_\mu\in\R$, $\beta^*_\mu\in i\R$.
We need a bound for all conjugates,
$$
\left|\frac12\tau'_\lambda\left(\sum_\mu b_\mu\beta_\mu+\sum_\mu b'_\mu\beta^*_\mu\right)\right|\le T_0.
$$

Note that the polynomial $\hat H_D[j,\alpha_*]$ does not depend on the set $\alpha_*$,
so the short notation $\hat H_D[j]=\hat H_D[j,\alpha_*]$ is correct.

For theoretical bounds we apply the method from \cite{perfbase}.

Let us consider along with $\hat H_D[j]$ also polynomials
\begin{equation}\label{el.hdef}
\hat H_{D,\varphi_0}[j](x)=\prod_{i:\varphi(\mathfrak h(A_i,B_i,C_i))=\varphi_0}(x-j(\alpha_i)),
\end{equation}
where $\varphi_0\in\{0,1\}^t$, $(A_i,B_i,C_i)$ runs over representatives of all form classes,
$\alpha_i$ is the root of $(A_i,B_i,C_i)$.

By definition, $\hat H_D[j]=\hat H_{D,(1,\dots,1)}[j]$. Similarly to $\hat H_D[j]$,
the polynomial $\hat H_{D,\varphi_0}[j]$ is in $\ro_{K_G}[x]$ for each $\varphi_0$.
Moreover, if $\sigma\in\Gal(L/\Q)$ is the automorphism corresponding to an ideal class
$\mathfrak b\in\mathcal H_D$, then $\hat H_{D,\varphi_0}[j]^\sigma=\hat H_{D,\varphi_0\varphi(\mathfrak b)^{-1}}[j]$
due to Corollary \ref{el.galaction.conv}. Since any automorphism of the field $K_G$ can be extended
to an element of $\Gal(L/\Q)$, for each $\tau\in\Gal(K_G/\Q)$ there exists $\varphi_0=\varphi_0(\tau)$
such that $\hat H_{D,\varphi_1}[j]^\tau=\hat H_{D,\varphi_1\varphi_0(\tau)}[j]$ for any $\varphi_1\in\{\pm1\}^t$.

\begin{theorem}
The absolute value of each coefficient of the polynomial $\hat H_{D,\varphi_0}[j]$ does not exceed
$$
\exp\left(c_5h+c_1N\left(\ln^2N+4\gamma\ln N+c_6+\frac{\ln N+\gamma+1}N\right)\right)
$$
$$
\le\exp\left(c_1N\ln^2N+c_2N\ln N+c_3N+c_1\ln N+c_4\right)=T_0,
$$
where $N=\sqrt{\frac{|D|}3}$, $\gamma=0.577...$ is the Euler constant,
$c_1=\sqrt3\pi=5.441...$, $c_2=18.587...$, $c_3=17.442...$, $c_4=11.594...$,
$c_5=3.011...$, $c_6=2.566...$ The asymptotic upper bound
$$
T_0=\exp O\left(\sqrt{|D|}\ln^2|D|\right)
$$
holds for other functions $\theta$ too.
\end{theorem}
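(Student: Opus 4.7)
The approach I would take is to bound every coefficient of $\hat H_{D,\varphi_0}[j](x)=\prod_{i\in S_{\varphi_0}}(x-j(\alpha_i))$, where $S_{\varphi_0}=\{i:\varphi(\mathfrak h(A_i,B_i,C_i))=\varphi_0\}$, via the elementary inequality
$$
\sum_{k=0}^{|S_{\varphi_0}|}|e_k(\{j(\alpha_i)\}_{i\in S_{\varphi_0}})|\le\prod_{i\in S_{\varphi_0}}(1+|j(\alpha_i)|),
$$
and then enlarge the product to the full set of $h$ reduced forms (this only increases the bound), making the estimate uniform in $\varphi_0$. It therefore suffices to bound $\sum_{i=1}^h\ln(1+|j(\alpha_i)|)$.

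For each reduced form $(A_i,B_i,C_i)$ the root $\alpha_i=(-B_i+\sqrt D)/(2A_i)$ satisfies $\im\alpha_i=\sqrt{|D|}/(2A_i)\ge\sqrt 3/2$. Using the Fourier expansion $j(\tau)=q^{-1}+744+\sum_{n\ge1}c_n q^n$ with $q=e^{2\pi i\tau}$ and positive integers $c_n$, I would derive
$$
|j(\alpha_i)|\le e^{\pi\sqrt{|D|}/A_i}+744+\sum_{n\ge1}c_n e^{-\pi n\sqrt{|D|}/A_i},
$$
where the tail is bounded by the absolute constant $j(i\sqrt 3/2)-e^{\pi\sqrt 3}-744$ because $\im\alpha_i\ge\sqrt 3/2$. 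Taking logarithms and summing over $i$,
$$
\sum_i\ln(1+|j(\alpha_i)|)\le\pi\sqrt{|D|}\sum_i\frac1{A_i}+O(h),
$$
the $O(h)$ contribution eventually giving the $c_5h$ summand.

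The main technical step is the arithmetic estimate of $\sum_i A_i^{-1}$. Grouping reduced forms by leading coefficient, this equals $\sum_{A=1}^{\lfloor N\rfloor}r(A)/A$ where $r(A)$ counts those $B$ with $-A<B\le A$ satisfying $B^2\equiv D\pmod{4A}$ and $(B^2-D)/(4A)\ge A$. Bounding $r(A)$ by the number of square roots of $D$ modulo $4A$ and applying a Mertens-type estimate (in which $\sum_{A\le N}A^{-1}=\ln N+\gamma+O(1/N)$ contributes the Euler constant) yields the bracketed expression $\ln^2 N+4\gamma\ln N+c_6+(\ln N+\gamma+1)/N$. Multiplying by $c_1N=\pi\sqrt{|D|}$ and adding $c_5h$ produces the first inequality of the theorem; the looser inequality follows by substituting the class-number bound $h=O(\sqrt{|D|}\ln|D|)$ and collecting terms. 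The hard part is extracting explicit constants from this Mertens-type estimate uniformly in $D$, which is the content of the method borrowed from \cite{perfbase}. For the other functions $\theta$ from Theorems \ref{el.t1}--\ref{el.t3} the argument is identical: each such $\theta$ has a $q$-expansion whose leading term is $q^{-c}$ for an explicit rational $c>0$, so $|\theta(\alpha)|$ is bounded by $e^{c\pi\sqrt{|D|}/A}$ up to lower order, which changes only the multiplicative constants in the exponent and preserves the asymptotic form $\exp O(\sqrt{|D|}\ln^2|D|)$.
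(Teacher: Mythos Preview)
Your proposal is correct and follows essentially the same route as the paper: bound $|j(\alpha_i)|$ at reduced forms via the Fourier expansion (the paper uses the Brisebarre--Philibert coefficient bounds where you use $j(i\sqrt3/2)$ for the tail, an inessential variant), enlarge the product over $S_{\varphi_0}$ to the full set of $h$ forms to make the bound uniform in $\varphi_0$, and then invoke the estimate of \cite{perfbase} for $\sum_i 1/A_i$. The only cosmetic difference is that the paper bounds the $k$-th coefficient by $\binom{\deg}{k}$ times the product of the largest roots rather than by $\prod_i(1+|j(\alpha_i)|)$, and for the other invariants $\theta$ it cites \cite[Proposition~3]{invariants} instead of repeating the $q$-expansion argument directly.
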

\begin{proof}
We follow \cite[Section 4]{perfbase}.

We can assume that $(A_i,B_i,C_i)$ in \eqref{el.hdef} are reduced forms, because
a change of a form to an equivalent form corresponds to some $SL_2(\Z)$-transformation
of the form root and the function $j$ is invariant under these. Let $(A,B,C)$ be
a reduced form; we need an upper bound for $\left|j\left(\frac{-B+\sqrt D}{2A}\right)\right|$.
The argument of $j$ lies in the area $\{z\in\h:|z|\ge1, |\re z|\le\frac12\}$. Therefore,
$\im z\ge\frac{\sqrt3}2$ and $|q|=|e^{2\pi iz}|\le e^{-\pi\sqrt{3}}$. Furthermore,
$$
j(z)=\frac1q+744+\sum_{m=1}^\infty c_mq^m,
$$
where $|c_m|\le\frac{e^{4\pi\sqrt m}}{\sqrt2m^{3/4}}$ due to \cite{jbound}. Thus,
$$
\left|j\left(\frac{-B+\sqrt D}{2A}\right)-\frac1q\right|\le744+\sum_{m=1}^\infty\frac{e^{4\pi\sqrt m}}{\sqrt 2m^{3/4}}e^{-\pi\sqrt3m}
=k_1=2114.566...
$$
and $\left|j\left(\frac{-B+\sqrt D}{2A}\right)\right|\le\frac1{|q|}+k_1\le\frac{k_2}{|q|}$
with $k_2=1+k_1e^{-\pi\sqrt3}=10.163...$.

Assume that all reduced forms are numbered so that $\{(A_i,B_i,C_i):1\le i\le\deg\hat H_D[j]\}$
are all reduced forms from the product \eqref{el.hdef} ordered by increasing
$\left|\frac1{q_i}\right|=e^{\pi\sqrt{|D|}/A_i}$. The absolute value of
the coefficient of $x^k$ in $\hat H_{D,\varphi_0}[j]$ does not exceed
$$
C_{\deg\hat H_D[j]}^k\prod_{i=k+1}^{\deg\hat H_D[j]}\frac{k_2}{|q_i|}
\le(2k_2)^{h/2^{t-1}}\prod_{i=1}^{h/2^{t-1}}e^{\pi\sqrt{|D|}/A_i}.
$$
Therefore, the logarithm of any coefficient of $\hat H_{D,\varphi_0}[j]$ does not exceed
$$
\frac h{2^{t-1}}\ln(2k_2)+\pi\sqrt{|D|}\sum_{i=1}^{h/2^{t-1}}\frac1{A_i}
\le h\ln(2k_2)+\pi\sqrt{|D|}\sum_{i=1}^h\frac1{A_i}.
$$
The bound for the last sum proved in \cite[Theorem 1.2]{perfbase} concludes the proof for $j$.

The bound for other functions $\theta$ follows from the proved one and \cite[Proposition 3]{invariants}.
\end{proof}

In practice it is better to use heuristic, but more accurate bounds.

The article \cite{invariants} suggests the following upper bound for logarithms of absolute values
of coefficients of the polynomial $H_D[j]$:
$$
\pi\sqrt{|D|}\sum_{(A,B,C)}\frac1A,
$$
where the sum is over all reduced forms. This bound is heuristic, but sufficiently close to the exact value.
The same article suggests multiplying this sum by some constant depending on $\theta$ to obtain
the analogous bound for $H_D[\theta]$. The constant is the ratio $\frac{\deg_j\Phi}{\deg_\theta\Phi}$,
where a polynomial $\Phi$ in two variables links functions $\theta$ and $j$ so that $\Phi(\theta(z),j(z))=0$.

Trivial changes of the arguments from \cite{invariants} with respect to $\hat H_D[j]$ give the
heuristic bound
\begin{equation}\label{el.t0used}
\ln T_0\sim\pi\sqrt{|D|}\max_{\varepsilon\in\{\pm1\}^t}\sum_{\substack
{(A,B,C):\varphi(\mathfrak h(A,B,C))=\varepsilon}}\frac1A
\end{equation}
for the invariant $j$. Again, for other invariants this bound should be multiplied by
$\frac{\deg_j\Phi}{\deg_\theta\Phi}$.

Let
$$
z=\frac12\left(\sum_\mu b_\mu\beta_\mu+\sum_\mu b'_\mu\beta^*_\mu\right)
\mbox{ be a coefficient of the polynomial }\hat H_{D,\varphi_0}[j],
$$
$b_\mu,b'_\mu\in\Z$. As mentioned above, the action of $\Gal(K_G/\Q)$ maps
the polynomial $\hat H_{D,\varphi_1}[j]$ to the polynomial of the same type.
So for any $\lambda\in\{0,1\}^t$ the following inequation holds:
$$
|\tau'_\lambda(z)|\le T_0.
$$

\section{Construction of rational approximations to a basis of ring of algebraic integers}
\label{el.simapprox}
There is a number of different algorithms for constructing simultaneous rational approximations
to a given set of real numbers. The book \cite{brentjes} covers many of them.
Properties of approximations differ significantly for different algorithms.
For practical purposes the inner product algorithm from \cite[Chapter 6A]{brentjes}
seems to be the best in the general case. Unfortunately, it is quite difficult to
prove good theoretical bounds for universal algorithms. Therefore we suggest
another algorithm which allows to obtain theoretical bounds, but
works only for very specific sets.

In essence, the main part of the following theorem is contained in the article \cite{peck}.
Main differences between the following theorem and \cite{peck} are
following: the explicit formulation, including explicit constants; the function
$\mathfrak M$ (\cite{peck} deals with dual basises which is equivalent to $\mathfrak M=1$);
specialization for our case (\cite{peck} does not require for $M/\Q$ to be Galois
and also contains a converse theorem).

\begin{theorem}\label{el.finalapprox}
Let $M\subset\R$ be a field such that $M/\Q$ is a Galois extension of degree $m$.
Let $W_1,\dots,W_m$ and $W_1^*,\dots,W_m^*$ be two basises of $M$.
Let $\mathfrak M:\Gal(M/\Q)\to\R$ be a function (not necessarily a homomorphism)
such that for each $1\le l,l'\le m$ the following equality holds:
$$
\sum_{\tau\in\Gal(M/\Q)}\mathfrak M(\tau)\tau(W_l W_{l'}^*)=
\begin{cases}1,&\mbox{if }l=l',\\ 0,&\mbox{if }l\ne l'.\end{cases}
$$
Let
$$
C=\sum_{\substack{\tau\in\Gal(M/\Q)\\ \tau\ne Id}}\left|\mathfrak M(\tau)\tau(W_1)\right|
$$
and
$$
C_i=\sum_{\substack{\tau\in\Gal(M/\Q)\\ \tau\ne Id}}\left|\mathfrak M(\tau)\left(\tau(W_i)-W_i\frac{\tau(W_1)}{W_1}\right)\right|
$$
for $i=2,\dots,m$. Let a positive number $\Delta$ and integers $\Lambda_1,\dots,\Lambda_m$ satisfy
the inequalities
$$
\sum_{i=1}^m \Lambda_iW_i^*=Z\ge1,
$$
$$
\left|\tau\left(\sum_{i=1}^m \Lambda_iW_i^*\right)\right|\le\frac{\Delta}{Z^{\frac1{m-1}}}\quad\mbox{ for each }\tau\in\Gal(M/\Q),\tau\ne Id.
$$
Then:
\begin{itemize}
\item $|\Lambda_1|\ge|\mathfrak M(Id)W_1|Z-C\Delta$.
\item If $|\Lambda_1|>C\Delta$, then $\mathfrak M(Id)\ne0$ and the following bound holds for each $i=2,\dots,m$:
$$
\left|\frac{\Lambda_i}{\Lambda_1}-\frac{W_i}{W_1}\right|\le C_i
\frac{\Delta}{|\Lambda_1|\left(\frac{|\Lambda_1|-C\Delta}{|\mathfrak M(Id)W_1|}\right)^{\frac1{m-1}}}.
$$
\end{itemize}
\end{theorem}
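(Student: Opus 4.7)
\medskip
\noindent\textbf{Proof plan for Theorem \ref{el.finalapprox}.}

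The plan is to exploit the orthogonality hypothesis to invert the basis expansion of $Z$ and recover each integer $\Lambda_l$ as a Galois-sum. Concretely, I will first establish the key identity
$$
\Lambda_l=\sum_{\tau\in\Gal(M/\Q)}\mathfrak M(\tau)\,\tau(W_l)\,\tau(Z)\qquad (l=1,\dots,m),
$$
by substituting $Z=\sum_i\Lambda_iW_i^*$ inside the sum, pulling out the integers $\Lambda_i$ (which are fixed by every $\tau$), and applying the assumed relation $\sum_\tau\mathfrak M(\tau)\tau(W_lW_i^*)=\delta_{l,i}$. This identity is the whole engine of the argument; the rest consists of splitting off the $\tau=\mathrm{Id}$ term and bounding everything else.

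For the first assertion I isolate the identity term and write
$$
\Lambda_1=\mathfrak M(\mathrm{Id})\,W_1\,Z+\sum_{\tau\neq\mathrm{Id}}\mathfrak M(\tau)\tau(W_1)\tau(Z).
$$
The hypothesis $|\tau(Z)|\le\Delta/Z^{1/(m-1)}$ together with $Z\ge1$ gives $|\tau(Z)|\le\Delta$, so the error sum is bounded by $C\Delta$ by the definition of $C$. The triangle inequality then yields $|\Lambda_1|\ge|\mathfrak M(\mathrm{Id})W_1|Z-C\Delta$, which is the first claim. In particular, under the extra assumption $|\Lambda_1|>C\Delta$, the previous inequality forces $|\mathfrak M(\mathrm{Id})W_1|Z>0$, hence $\mathfrak M(\mathrm{Id})\neq0$, and it also provides the crucial lower bound
$$
Z\ge\frac{|\Lambda_1|-C\Delta}{|\mathfrak M(\mathrm{Id})W_1|}>0,
$$
which will be used to turn a bound in terms of $Z$ into the one stated in the theorem.

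For the second assertion I form the key telescoping combination
$$
\Lambda_i-\frac{W_i}{W_1}\Lambda_1=\sum_{\tau\neq\mathrm{Id}}\mathfrak M(\tau)\left(\tau(W_i)-W_i\frac{\tau(W_1)}{W_1}\right)\tau(Z),
$$
in which the $\tau=\mathrm{Id}$ contribution cancels by construction. Estimating $|\tau(Z)|\le\Delta/Z^{1/(m-1)}$ and applying the definition of $C_i$ bounds the right-hand side by $C_i\Delta/Z^{1/(m-1)}$; dividing by $|\Lambda_1|$ and substituting the lower bound for $Z$ derived just above produces exactly the claimed inequality. The only potentially delicate point—which I view as the main (but routine) obstacle—is keeping track of the passage from $Z^{-1/(m-1)}$ to $\bigl((|\Lambda_1|-C\Delta)/|\mathfrak M(\mathrm{Id})W_1|\bigr)^{-1/(m-1)}$, which requires the condition $|\Lambda_1|>C\Delta$ and the positivity of $\mathfrak M(\mathrm{Id})W_1$ that has just been secured.
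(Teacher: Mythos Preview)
Your proposal is correct and follows essentially the same approach as the paper: derive the inversion identity $\Lambda_l=\sum_\tau\mathfrak M(\tau)\tau(W_l)\tau(Z)$ from the orthogonality relation, isolate the $\tau=\mathrm{Id}$ term, bound the remainder using $|\tau(Z)|\le\Delta/Z^{1/(m-1)}\le\Delta$, then subtract $\frac{W_i}{W_1}$ times the $l=1$ identity from the $l=i$ identity to cancel the main term and obtain the second bound via the lower estimate for $Z$. The paper's argument is identical in structure and detail.
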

\begin{proof}
For each $l=1,\dots,m$
\begin{multline}\label{el.finalapprox.main}
\Lambda_l=\sum_{l'=1}^m\Lambda_{l'}\left(\sum_{\tau\in\Gal(M/\Q)}\mathfrak M(\tau)\tau\left(W_lW^*_{l'}\right)\right)\\
=\sum_{\tau\in\Gal(M/\Q)}\mathfrak M(\tau)\tau(W_l)\left(\sum_{l'=1}^m\Lambda_{l'}\tau(W^*_{l'})\right)\\
=\mathfrak M(Id)W_lZ+\sum_{\substack{\tau\in\Gal(M/\Q)\\ \tau\ne Id}}\mathfrak M(\tau)\tau(W_l)\tau(Z).
\end{multline}
Substitute $l=1$:
\begin{equation}\label{el.finalapprox.1}
\Lambda_1=\mathfrak M(Id)W_1Z+\sum_{\substack{\tau\in\Gal(M/\Q)\\ \tau\ne Id}}\mathfrak M(\tau)\tau(W_1)\tau(Z).
\end{equation}
Using the definition of $C$ and the bound for $\tau(Z)$, we obtain
$$
|\Lambda_1-\mathfrak M(Id)W_1Z|\le\frac{C\Delta}{Z^{\frac1{m-1}}}\le C\Delta.
$$
This proves the first assertion.

Assume that $|\Lambda_1|>C\Delta$. Then
$$
|\mathfrak M(Id)W_1|Z\ge|\Lambda_1|-C\Delta.
$$
Therefore, $\mathfrak M(Id)\ne0$ and
\begin{equation}\label{el.finalapprox.z}
Z\ge\frac{|\Lambda_1|-C\Delta}{|\mathfrak M(Id)W_1|}.
\end{equation}
Multiply the equality \eqref{el.finalapprox.1} by $\frac{W_l}{W_1}$ and subtract from \eqref{el.finalapprox.main}.
Then use the definition of $C_l$ and the bound for $\tau(Z)$:
$$
\left|\Lambda_l-\frac{W_l}{W_1}\Lambda_1\right|\le C_l\frac{\Delta}{Z^{\frac1{m-1}}}.
$$
Divide the last inequality by $|\Lambda_1|$:
$$
\left|\frac{\Lambda_l}{\Lambda_1}-\frac{W_l}{W_1}\right|\le C_l\frac{\Delta}{|\Lambda_1|Z^{\frac1{m-1}}}.
$$
Now it is sufficient to use \eqref{el.finalapprox.z} to conclude the proof.
\end{proof}

The article \cite{peck} uses a knowledge of group of units in $\ro^*_M$ (Dirichlet theorem)
and looks for $\sum_{i=1}^m \Lambda_iW_i^*$ as a unit of a special form. It allows to prove
interesting theoretical results, but it is quite inconvenient from the practical point of view.
We use another approach.

We want to construct simultaneous approximations to elements of the field $\mathcal M=K_G\cap\R$.
In order to do this, we apply Theorem \ref{el.finalapprox} to the field $M=\mathcal M$.
Thus, $m=[\mathcal M:\Q]=2^{t-1}$, $t\ge2$, and $\Gal(\mathcal M/\Q)$ consists of automorphisms
$\tau_\lambda$ defined by \eqref{el.taudef}, $\lambda\in\{0,1\}^{t-1}$.

It is convenient to numerate sets related to the field $\mathcal M$ by vectors from $\{0,1\}^{t-1}$.
Hereafter we assume that two basises $\omega_\mu$ and $\omega^*_\mu$ of $\mathcal M$ over $\Q$
and a function $\mathfrak M:\Gal(\mathcal M/\Q)\to\R$ are given and satisfy the following
conditions:
\begin{enumerate}
\item $\omega^*_{0,\dots,0}=1$.
\item Any element of $\ro_M$ is a linear combination of $\{\omega_\mu^*\}$ with integer coefficients.
\item For any $\lambda,\lambda'\in\{0,1\}^{t-1}$,
\begin{equation}\label{el.dual}
\sum_{\mu\in\{0,1\}^{t-1}}\mathfrak M(\tau_\mu)\tau_\mu\left(\omega_\lambda\omega^*_{\lambda'}\right)=
\begin{cases}
1,&\mbox{if }\lambda=\lambda',\\
0,&\mbox{if }\lambda\ne\lambda'.
\end{cases}
\end{equation}
\end{enumerate}
We call such a pair an $\mathfrak M$-pair. It is easy to see that these conditions imply
conditions on basises from Theorem \ref{el.finalapprox} applied to the numbers
\begin{eqnarray*}
W_{1+\mu_1+2\mu_2+2^2\mu_3+\ldots+2^{t-2}\mu_{t-1}}&=&\omega_\mu,\\
W^*_{1+\mu_1+2\mu_2+2^2\mu_3+\ldots+2^{t-2}\mu_{t-1}}&=&\omega^*_\mu.
\end{eqnarray*}

Note that if $x\in\ro_{\mathcal M}$, then $x\beta_{0,\dots,0}^*\in\ro_{K_G}\cap i\R$.
Two following corollaries follow easily from Theorems \ref{el.basiscapr}--\ref{el.basiscaprend}.
As in these theorems, the value of $\sqrt{d}$ is chosen as the product $\sqrt{q_1^*}\ldots\sqrt{q_t^*}$.

\begin{corollary} Conditions 1--3 hold for
\begin{equation}\label{el.omega1def}\begin{array}{rcl}
\omega_{\mu_1,\dots,\mu_{t-1}}&=&\frac{\beta_{\mu_1,\dots,\mu_{t-1}}}{\beta_{0,\dots,0}},\\
\omega^*_{\mu_1,\dots,\mu_{t-1}}&=&\frac{\beta^*_{\mu_1,\dots,\mu_{t-1}}}{\beta^*_{0,\dots,0}},\\
\mathfrak M(\tau_{\mu_1,\dots,\mu_{t-1}})&=&(-1)^{\mu_1+\ldots+\mu_{t-1}}\frac{\tau_{\mu_1,\dots,\mu_{t-1}}\left(\beta_{0,\dots,0}\beta^*_{0,\dots,0}\right)}{\sqrt{d}}.
\end{array}\end{equation}
\end{corollary}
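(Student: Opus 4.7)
The plan is to verify the three defining properties of an $\mathfrak M$-pair one by one, each of which reduces either to a trivial substitution or to one of the assertions already proved in Theorems \ref{el.basiscapr}--\ref{el.basiscaprend}.

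\textbf{Condition 1} is immediate: by definition $\omega^*_{0,\dots,0}=\beta^*_{0,\dots,0}/\beta^*_{0,\dots,0}=1$.

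\textbf{Condition 2} reduces to the basis statement (the second assertion) of the appropriate one of Theorems \ref{el.basiscapr}--\ref{el.basiscaprend}. For $x\in\ro_{\mathcal M}$ I would form the product $x\beta^*_{0,\dots,0}$. Since $x\in\R$ and $\beta^*_{0,\dots,0}\in i\R$ while both lie in $\ro_{K_G}$, the product lies in $\ro_{K_G}\cap i\R$. The second assertion of the relevant basis theorem then yields integers $c_\mu$ with $x\beta^*_{0,\dots,0}=\sum_\mu c_\mu\beta^*_\mu$; dividing by $\beta^*_{0,\dots,0}$ (which is nonzero, being a basis element) gives $x=\sum_\mu c_\mu\omega^*_\mu$ as required.

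\textbf{Condition 3} is the interesting step and is proved by direct calculation, using the third assertion of Theorems \ref{el.basiscapr}--\ref{el.basiscaprend}. Multiplying out,
$$
\mathfrak M(\tau_\mu)\,\tau_\mu(\omega_\lambda\omega^*_{\lambda'})
=(-1)^{\mu_1+\cdots+\mu_{t-1}}\frac{\tau_\mu(\beta_{0,\dots,0}\beta^*_{0,\dots,0})}{\sqrt d}\cdot\frac{\tau_\mu(\beta_\lambda\beta^*_{\lambda'})}{\tau_\mu(\beta_{0,\dots,0}\beta^*_{0,\dots,0})},
$$
and the images of $\beta_{0,\dots,0}\beta^*_{0,\dots,0}$ cancel because $\tau_\mu$ is a field automorphism and this element is nonzero. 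Summing over $\mu\in\{0,1\}^{t-1}$,
$$
\sum_\mu\mathfrak M(\tau_\mu)\tau_\mu(\omega_\lambda\omega^*_{\lambda'})=\frac1{\sqrt d}\sum_\mu(-1)^{\mu_1+\cdots+\mu_{t-1}}\tau_\mu(\beta_\lambda\beta^*_{\lambda'}),
$$
which by the third assertion of whichever of Theorems \ref{el.basiscapr}--\ref{el.basiscaprend} applies to the case at hand equals $\sqrt d/\sqrt d=1$ if $\lambda=\lambda'$ and $0$ otherwise.

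The only subtlety I expect is bookkeeping: one must use the choice of $\sqrt d=\sqrt{q_1^*}\cdots\sqrt{q_t^*}$ fixed before Theorem \ref{el.basiscapr} so that the signs in the third assertion come out uniformly, and one must check that the argument is insensitive to which of the three cases (odd $q_t^*$, $q_1^*=8$, $q_t^*\in\{-4,-8\}$) is in force. Since all three theorems share an identical third assertion, no case analysis beyond invoking the right theorem is required, and no further theoretical input beyond Theorems \ref{el.basiscapr}--\ref{el.basiscaprend} is needed.
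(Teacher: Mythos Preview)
Your proposal is correct and matches the paper's own approach. The paper does not give a detailed proof either: it simply notes that if $x\in\ro_{\mathcal M}$ then $x\beta^*_{0,\dots,0}\in\ro_{K_G}\cap i\R$ (your key step for Condition~2) and declares that the corollary follows easily from Theorems~\ref{el.basiscapr}--\ref{el.basiscaprend}, which is precisely the route you take.
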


\begin{corollary} Conditions 1--3 hold for
\begin{equation}\label{el.omega2def}\begin{array}{rcl}
\omega_{\mu_1,\dots,\mu_{t-1}}&=&\frac{\beta^*_{\mu_1,\dots,\mu_{t-1}}}{\beta^*_{0,\dots,0}},\\
\omega^*_{\mu_1,\dots,\mu_{t-1}}&=&\frac{\beta_{\mu_1,\dots,\mu_{t-1}}}{\beta_{0,\dots,0}},\\
\mathfrak M(\tau_{\mu_1,\dots,\mu_{t-1}})&=&(-1)^{\mu_1+\ldots+\mu_{t-1}}\frac{\tau_{\mu_1,\dots,\mu_{t-1}}\left(\beta_{0,\dots,0}\beta^*_{0,\dots,0}\right)}{\sqrt{d}}.
\end{array}\end{equation}
\end{corollary}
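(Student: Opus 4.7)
The plan is to verify Conditions 1--3 directly by substituting the definitions, relying on the third assertion of Theorems \ref{el.basiscapr}--\ref{el.basiscaprend} for the key orthogonality relation. This corollary differs from the previous one only by swapping the roles of $\omega$ and $\omega^*$ (equivalently, of $\beta$ and $\beta^*$); the function $\mathfrak M$ is exactly the same, so the work is parallel and no new ingredients are needed.

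First, Condition 1 is immediate: $\omega^*_{0,\dots,0}=\beta_{0,\dots,0}/\beta_{0,\dots,0}=1$. For Condition 2, I would use that $\{\beta_\mu\}$ is a $\Z$-basis of $\ro_{\mathcal M}$ (by whichever of Theorems \ref{el.basiscapr}--\ref{el.basiscaprend} applies to the current case for $q_t^*$). Then the $\Z$-span of $\{\omega^*_\mu\}=\{\beta_\mu/\beta_{0,\dots,0}\}$ is the fractional ideal $\beta_{0,\dots,0}^{-1}\ro_{\mathcal M}$. Since $\beta_{0,\dots,0}\in\ro_{\mathcal M}$, we have $\beta_{0,\dots,0}\cdot\ro_{\mathcal M}\subseteq\ro_{\mathcal M}$, hence $\ro_{\mathcal M}\subseteq\beta_{0,\dots,0}^{-1}\ro_{\mathcal M}$, which is precisely Condition 2.

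For Condition 3, I would plug the definitions into the left-hand side of \eqref{el.dual} to get
$$
\sum_\mu\mathfrak M(\tau_\mu)\tau_\mu(\omega_\lambda\omega^*_{\lambda'})
=\sum_\mu(-1)^{\mu_1+\ldots+\mu_{t-1}}\frac{\tau_\mu(\beta_{0,\dots,0}\beta^*_{0,\dots,0})}{\sqrt d}\cdot\tau_\mu\!\left(\frac{\beta^*_\lambda\beta_{\lambda'}}{\beta^*_{0,\dots,0}\beta_{0,\dots,0}}\right).
$$
Because both factors sit inside the same $\tau_\mu$, the $\tau_\mu(\beta_{0,\dots,0}\beta^*_{0,\dots,0})$ in the numerator of $\mathfrak M(\tau_\mu)$ cancels with the denominator of the second factor, collapsing the sum to $\frac{1}{\sqrt d}\sum_\mu(-1)^{\mu_1+\ldots+\mu_{t-1}}\tau_\mu(\beta_{\lambda'}\beta^*_\lambda)$. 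Applying the third assertion of Theorems \ref{el.basiscapr}--\ref{el.basiscaprend} with $\eta=\lambda'$ and $\nu=\lambda$ yields $\sqrt d$ if $\lambda=\lambda'$ and $0$ otherwise, so the whole expression equals the desired Kronecker delta.

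No serious obstacle arises. The orthogonality used for Condition 3 is intrinsically symmetric in its two indices (the product $\beta_\eta\beta^*_\nu$ is a product of two numbers, one from each basis, and relabeling $\eta\leftrightarrow\nu$ is harmless), which is precisely why swapping the two bases requires no new identity. The only point to keep in mind is that Theorems \ref{el.basiscapr}--\ref{el.basiscaprend} split into cases according to whether all $|q_i|$ are odd, $q_t^*=\pm 8$, or $q_t^*=-4$; but in every case both the $\Z$-basis property of $\{\beta_\mu\}$ for $\ro_{\mathcal M}$ and the orthogonality formula hold in the same form, so the argument is uniform.
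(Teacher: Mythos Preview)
Your proof is correct and follows exactly the route the paper intends: the paper merely asserts that both corollaries ``follow easily from Theorems \ref{el.basiscapr}--\ref{el.basiscaprend}'' without writing out any details, and your verification of Conditions 1--3 by direct substitution and appeal to the orthogonality relation in those theorems is precisely the expected unpacking. The symmetry observation (that swapping $\beta$ and $\beta^*$ just relabels $\eta\leftrightarrow\nu$ in the orthogonality sum) is the right reason why nothing new is needed beyond the first corollary.
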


Theorem \ref{el.finalapprox} also uses integer numbers $\Lambda_i$ and a constant $\Delta$. The rest
of this section deals with construction of a set $A_\mu$ such that the numbers
$$
\Lambda_{1+\mu_1+2\mu_2+2^2\mu_3+\ldots+2^{t-2}\mu_{t-1}}=A_{\mu_1,\dots,\mu_{t-1}}
$$
satisfy the assumption of Theorem \ref{el.finalapprox} with some $\Delta$.

We need the following quantities to describe the algorithm. Let $\lambda\in\{0,1\}^{t-1}$, $\lambda\ne(0,\dots,0)$.
Define
$$
\delta_\lambda=(q_1^*)^{\lambda_1}\ldots(q_{t-1}^*)^{\lambda_{t-1}}(q_t^*)^{\lambda_{u+1}\oplus\ldots\oplus\lambda_{t-1}}.
$$
If $\delta_\lambda$ is even, set
$$
g_\lambda=\frac{\sqrt{\delta_\lambda}}2,
$$
otherwise set
$$
g_\lambda=\frac{1+\sqrt{\delta_\lambda}}2.
$$
Then $g_\lambda\in\ro_M$.

We use continued fractions. We remind that for any number $X\in\R$ two sequences are defined:
\textit{complete quotients} $X_0,X_1,X_2,\dots$ and \textit{partial quotients} $a_0,a_1,a_2,\dots$,
where $X_0=X$, $a_n=\lfloor X_n\rfloor$, $X_{n+1}=\frac1{X_n-a_n}$. These sequences are finite
(i.e. $X_n$ is indefinite for some $n$) if and only if $X\in\Q$. In addition, the sequence
of \textit{convergents} $\frac{P_0}{Q_0},\frac{P_1}{Q_1},\frac{P_2}{Q_2},\dots$
is defined as follows: $P_{-1}=0,Q_{-1}=0,P_0=a_0,Q_0=1,P_{n+1}=a_{n+1}P_n+P_{n-1},Q_{n+1}=a_{n+1}Q_n+Q_{n-1}$.
It is well known (e.g. \cite[Theorems 9 and 12]{hinchin}), that for any $n\ge0$
\begin{equation}\label{el.contapprox}
\left|X-\frac{P_n}{Q_n}\right|<\frac1{Q_nQ_{n+1}},\mbox{ if }X_{n+2}\mbox{ is defined};
\end{equation}
\begin{equation}\label{el.qlower}
Q_n\ge2^{\frac{n-1}2}.
\end{equation}

In the case of quadratic irrationals these sequences have an additional structure.
We use some results from \cite[\S II.10]{venkov} collected in the next statement.
\begin{statement}\label{el.venkov} Let $a,b,c$ be integer numbers with $\gcd(a,b,c)=1$. Let $\delta=b^2-ac>0$
be not an exact square. We call the roots of the equation $ax^2+2bx+c=0$ as
\textit{irrationals of determinant $\delta$}.

Let $X=\frac{-b+\sqrt\delta}a$ be an irrational of determinant $\delta$.
Then all complete quotients $X_n$ are also irrationals of determinant $\delta$ and
have a form $X_n=\frac{x_n+\sqrt\delta}{y_n}$, where $x_n,y_n\in\Z$ are uniquely determined.
Let $a_n=\lfloor X_n\rfloor$ be partial quotients for $X$.
Define $y_{-1}=-c=\frac{\delta-b^2}a\in\Z$. The following recurrent formulas hold:
\begin{equation}\label{el.xyrecur}\begin{array}{rcl}
x_n&=&y_{n-1}a_{n-1}-x_{n-1},\quad n\ge1;\\
\delta&=&x_n^2+y_ny_{n-1},\quad n\ge0;\\
y_n&=&y_{n-2}-a_{n-1}(x_n-x_{n-1}),\quad n\ge1.
\end{array}\end{equation}
Moreover, for $n\ge0$
$$
X_1\ldots X_n=\frac{(-1)^n}{P_{n-1}-Q_{n-1}X};
$$
\begin{equation}\label{el.converg}
aP_{n-1}^2+2bP_{n-1}Q_{n-1}+cQ_{n-1}^2=(-1)^ny_n.
\end{equation}
A number $\frac{x+\sqrt\delta}y$ with $x,y\in\Z$ is \textit{reduced}
if $\frac{x+\sqrt\delta}y>1$ and $-1<\frac{x-\sqrt\delta}y<0$. A number
$\frac{x+\sqrt\delta}y$ is reduced if and only if $0<\sqrt\delta-x<y<\sqrt\delta+x$.
If $X$ is reduced, then all complete quotients for $X$ are also reduced.
\end{statement}

We calculate continued fractions for all numbers $g_\lambda$ in parallel,
$\lambda\in\{0,1\}^{t-1}$, $\lambda\ne0$. Let $X_{\lambda,n}$ be complete
quotients for $g_\lambda$, $a_{\lambda,n}$ be partial quotients for $g_\lambda$.
Let $P_{\lambda,n}$ and $Q_{\lambda,n}$ be numerators and denominators
of convergents of $g_\lambda$ respectively. Let $x_{\lambda,n}$,
$y_{\lambda,n}$ be the quantities $x_n$, $y_n$
from Statement \ref{el.venkov} calculated for $X=g_\lambda$.
Let $\sigma_\lambda$ denote the only nontrivial automorphism of the field $\Q(g_\lambda)$.

If $\delta_\lambda$ is odd, then $g_\lambda$ is an irrational of determinant $\delta$,
$x_{\lambda,0}=1$, $y_{\lambda,0}=2$, $y_{\lambda,-1}=\frac{\delta_\lambda-1}2$.
It is easy to see from \eqref{el.xyrecur} by induction that $x_{\lambda,n}$ is odd
and $y_{\lambda,n}$ is even for all $n$. Let $x'_{\lambda,n}=\frac{x_{\lambda,n}-1}2\in\Z$
and $y'_{\lambda,n}=\frac{y_{\lambda,n}}2\in\Z$. The quadratic polynomial $ax^2+2bx+c$,
where $a,b,c$ are defined in Statement \ref{el.venkov}, has the first coefficient
2 and roots $g_\lambda,\sigma_\lambda(g_\lambda)$. Thus,
\eqref{el.converg} is equivalent to
$2(P_{\lambda,n-1}-Q_{\lambda,n-1}g_\lambda)\sigma_\lambda(P_{\lambda,n-1}-Q_{\lambda,n-1}g_\lambda)
=(-1)^ny_{\lambda,n}=(-1)^n2y'_{\lambda,n}$.

If $\delta_\lambda$ is even, then $g_\lambda$ is an irrational of determinant $\frac{\delta_\lambda}4$,
$x_{\lambda,0}=0,y_{\lambda,0}=1,y_{\lambda,-1}=\frac{\delta_\lambda}4$. Let $x'_{\lambda,n}=x_{\lambda,n}$
and $y'_{\lambda,n}=y_{\lambda,n}$. The quadratic polynomial $ax^2+2bx+c$, where $a,b,c$ are defined
in Statement \ref{el.venkov}, has the first coefficient 1 and roots $g_\lambda,\sigma_\lambda(g_\lambda)$.
Thus, \eqref{el.converg} is equivalent to
$(P_{\lambda,n-1}-Q_{\lambda,n-1}g_\lambda)\sigma_\lambda(P_{\lambda,n-1}-Q_{\lambda,n-1}g_\lambda)
=(-1)^ny_{\lambda,n}=(-1)^ny'_{\lambda,n}$.

In both cases
$$X_{\lambda,n}=\frac{g_\lambda+x'_{\lambda,n}}{y'_{\lambda,n}};$$
\begin{equation}\label{el.quasizandcong}
(P_{\lambda,n-1}-Q_{\lambda,n-1}g_\lambda)\sigma_\lambda(P_{\lambda,n-1}-Q_{\lambda,n-1}g_\lambda)=(-1)^ny'_{\lambda,n}.
\end{equation}

Statement \ref{el.venkov} gives an efficient method to calculate numbers $x'_{\lambda,n}$, $y'_{\lambda,n}$,
$a_{\lambda,n}=\lfloor X_{\lambda,n}\rfloor$ in sequence and then $P_{\lambda,n}$ and $Q_{\lambda,n}$.
The algorithm uses numbers $x'_{\lambda,n}$, $y'_{\lambda,n}$ and
\begin{equation}\label{el.zdef}
z_{\lambda,n}=\frac1{X_{\lambda,1}\ldots X_{\lambda,n}}=(-1)^n(P_{\lambda,n-1}-Q_{\lambda,n-1}g_\lambda)\in\ro_{\mathcal M}.
\end{equation}
This definition and the equality \eqref{el.quasizandcong} imply that for any $n\ge0$
\begin{equation}\label{zandcong}
z_{\lambda,n}\sigma_{\lambda}(z_{\lambda,n})=(-1)^ny'_{\lambda,n}.
\end{equation}

Numbers $A_\mu$ are taken from the equality
$$
\prod_{\lambda\ne0}\left((-1)^{n_\lambda}\sigma_\lambda(z_{\lambda,n_\lambda})\right)=\sum_\mu A_\mu\omega_\mu^*.
$$
The left-hand side is the product of algebraic integers due to \eqref{el.quasizandcong},
so the condition 2 on $\mathfrak M$-pair guarantees that $A_\mu$ are integers.

Each step of the algorithm increments exactly one of numbers $n_\lambda$. This multiplies
$\sum_\mu A_\mu\omega^*_\mu$ by
$$
\frac{(-1)^{n+1}\sigma_\lambda\left(z_{\lambda,n+1}\right)}{(-1)^n\sigma_\lambda\left(z_{\lambda,n}\right)}=
\frac{y'_{\lambda,n+1}/z_{\lambda,n+1}}{y'_{\lambda,n}/z_{\lambda,n}}=\frac{X_{\lambda,n+1}y'_{\lambda,n+1}}{y'_{\lambda,n}}=
\frac{g_\lambda+x'_{\lambda,n+1}}{y'_{\lambda,n}}.
$$
Thus, we need to switch from the set $A_\mu$ to the set $A'_\mu$ such that
$$
\left(\sum_\mu A'_\mu\omega_\mu^*\right)=\left(\sum_\xi A_\xi\omega_\xi^*\right)\frac{g_\lambda+x_{\lambda}}{y_{\lambda}}
$$
(where $x_\lambda=x'_{\lambda,n_{\lambda+1}}$ and $y_\lambda=y'_{\lambda,n_\lambda}$).
Since $\{\omega^*_\mu\}$ is a $\Q$-basis of $\mathcal M$ and $g_\mu\in\mathcal M$,
we can precompute numbers $c_{\mu\xi\eta}\in\Q$ such that
$$
\omega^*_\xi g_\eta=\sum_\mu c_{\mu\xi\eta}\omega^*_\mu.
$$
On each step we calculate
$$
\left(\sum_\xi A_\xi\omega^*_\xi\right)\frac{g_\lambda+x_\lambda}{y_\lambda}=
\frac1{y_\lambda}\left(\sum_\xi A_\xi\sum_\mu c_{\mu\xi\lambda}\omega^*_\mu+\sum_\xi A_\xi\omega^*_\xi x_\lambda\right)
=\sum_\mu\frac{\sum_\xi A_\xi c_{\mu\xi\lambda}+A_\mu x_\lambda}{y_\lambda}\omega^*_\mu.
$$

Now we are ready to show the algorithm.

\textbf{Algorithm for construction of simultaneous approximations.}
Input data: the sets $\delta_\lambda$, $g_\lambda$, $c_{\mu\xi\eta}$ as above, the threshold $N_0>0$.
Output data: the set of integer numbers $A_\mu$ such that $|A_{0,\dots,0}|\ge N_0$ and
$\frac{A_\mu}{A_{0,\dots,0}}$ is an approximation to $\frac{\omega_\mu}{\omega_{0,\dots,0}}$ for
each $\mu\in\{0,1\}^{t-1}$.

The algorithm keeps a set of $2^{t-1}$ integer numbers $A_\mu$ and auxiliary sets of
non-negative integers $x_\lambda$, positive integers $(y_\lambda,\tilde y_\lambda)$
and positive reals $(z_\lambda,\tilde z_\lambda)$ for $\lambda\in\{0,1\}^{t-1}$,
$\lambda\ne(0,\dots,0)$. These sets have the following sense: if each vector $\lambda$
was selected $n_\lambda$ times during the step 3 below, then
\begin{eqnarray*}
x_\lambda&=&x'_{\lambda,n_\lambda},\\
(y_\lambda,\tilde y_\lambda)&=&(y'_{\lambda,n_\lambda},y'_{\lambda,n_\lambda-1}),\\
(z_\lambda,\tilde z_\lambda)&=&(z_{\lambda,n_\lambda},z_{\lambda,n_\lambda-1}),\\
\sum_\mu A_\mu\omega^*_\mu&=&\prod_{\lambda\ne0}((-1)^{n_\lambda}\sigma_\lambda(z_{\lambda,n_\lambda})).
\end{eqnarray*}
The algorithm consists of the following steps.
\begin{enumerate}
\item \textit{Initialization.} For each $\lambda\in\{0,1\}^{t-1}$, $\lambda\ne(0,\dots,0)$ set
\begin{eqnarray*}
A_{0,\dots,0}&:=&1\\
A_\lambda&:=&0\\
x_\lambda&:=&0\\
(y_\lambda,\tilde y_\lambda)&:=&(1,{\scriptstyle\left\lfloor\frac{\delta_\lambda}4\right\rfloor})\\
(z_\lambda,\tilde z_\lambda)&:=&(1,g_\lambda).
\end{eqnarray*}
\item \textit{Iterations.} Repeat the following steps while $|A_{0,\dots,0}|<N_0$.
\item Select any $\lambda$ such that $z_\lambda=\max_{\mu\ne(0,\dots,0)} z_\mu$.
\item Calculate $a=\left\lfloor\frac{g_\lambda+x_\lambda}{y_{\lambda}}\right\rfloor$.
\item Set $(z_\lambda,\tilde z_\lambda):=(\tilde z_\lambda-az_\lambda,z_\lambda)$.
\item Save $x=x_\lambda$. Set $x_\lambda:=ay_\lambda-x_\lambda-4\left\{\frac{\delta_\lambda}4\right\}$.
Set $(y_\lambda,\tilde y_\lambda):=(\tilde y_\lambda-a(x_\lambda-x),y_\lambda)$.
(As shown below, the new value of $x_\lambda$ is always a non-negative integer, the new value of
$y_\lambda$ is always a positive integer.)
\item For each $\mu$ calculate
$$
A'_\mu=\frac{\sum_\xi A_\xi c_{\mu\xi\lambda}+A_\mu x_\lambda}{\tilde y_\lambda}.
$$
(As shown above, $A'_\mu\in\Z$ for all $\mu$.) Set $A_\mu:=A'_\mu$.
\end{enumerate}

\begin{theorem}\label{el.approxalg} The algorithm completes in $O(\ln N_0)$ steps.
The following inequalities hold in every step of the algorithm:
$$
0\le x_\lambda<\sqrt{\delta_\lambda}-g_\lambda,
$$
$$
0<y_\lambda<\sqrt{\delta_\lambda};
$$
$$
Z=\sum_\mu A_\mu\omega^*_\mu\ge1,
$$
$$
\left|\tau_\lambda\left(\sum_\mu A_\mu\omega^*_\mu\right)\right|\le\frac{\sqrt{|d|}^m}{Z^{\frac1{m-1}}}
\mbox{ for }\lambda\ne(0,\dots,0).
$$
\end{theorem}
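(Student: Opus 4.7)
The plan is to maintain, throughout execution of the loop, a set of invariants relating the stored variables to the continued-fraction data of each $g_\lambda$ from Statement~\ref{el.venkov}, and then to read off the four conclusions of the theorem from these invariants. Writing $n_\lambda$ for the number of times step~3 has selected the vector $\lambda$, the goal is to show by induction on the total number of iterations that $x_\lambda=x'_{\lambda,n_\lambda}$, $(y_\lambda,\tilde y_\lambda)=(y'_{\lambda,n_\lambda},y'_{\lambda,n_\lambda-1})$, $(z_\lambda,\tilde z_\lambda)=(z_{\lambda,n_\lambda},z_{\lambda,n_\lambda-1})$, and
\[
\sum_\mu A_\mu\omega^*_\mu = \prod_{\lambda\ne(0,\dots,0)}(-1)^{n_\lambda}\sigma_\lambda(z_{\lambda,n_\lambda}).
\]

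First I would verify that the initialization corresponds to $n_\lambda=0$ and that steps~4--7 correctly implement one iteration of the continued-fraction expansion of $g_\lambda$: step~4 produces the next partial quotient $a=a_{\lambda,n_\lambda}$, step~5 produces $z_{\lambda,n_\lambda+1}$ via the relation $z_{n+1}=z_{n-1}-az_n$, step~6 is precisely \eqref{el.xyrecur} translated to the primed variables (the $4\{\delta_\lambda/4\}$ term accounting for the parity shift between $x_n$ and $x'_n$ when $\delta_\lambda$ is odd), and step~7 multiplies $Z$ by $(g_\lambda+x'_{\lambda,n+1})/y'_{\lambda,n}=\sigma_\lambda(z_{\lambda,n+1})/\sigma_\lambda(z_{\lambda,n})$ as derived just above the algorithm. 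The first two assertions $0\le x_\lambda<\sqrt{\delta_\lambda}-g_\lambda$ and $0<y_\lambda<\sqrt{\delta_\lambda}$ then follow from the reducedness of every complete quotient $X_{\lambda,n_\lambda}$: case analysis on the parity of $\delta_\lambda$ shows that $X_{\lambda,0}$ is reduced, and Statement~\ref{el.venkov} guarantees that reducedness is preserved.

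The bound $Z\ge 1$ is immediate from the product formula: by \eqref{zandcong}, $(-1)^{n_\lambda}\sigma_\lambda(z_{\lambda,n_\lambda})=y'_{\lambda,n_\lambda}/z_{\lambda,n_\lambda}$; since $z_{\lambda,n_\lambda}\in(0,1]$ (each $X_{\lambda,i}\ge 1$ makes the denominator in \eqref{el.zdef} at least $1$) and $y'_{\lambda,n_\lambda}\ge 1$, every factor is $\ge 1$. For the Galois bound $|\tau_\rho(Z)|\le\sqrt{|d|}^m/Z^{1/(m-1)}$ with $\rho\ne(0,\dots,0)$, I would first analyze the action of $\tau_\rho$ factor by factor. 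From the definitions of $\delta_\lambda$ and $\tau_\rho$ one checks that $\tau_\rho(\sqrt{\delta_\lambda})/\sqrt{\delta_\lambda}=(-1)^{\rho\cdot\lambda}$ where $\rho\cdot\lambda=\sum_{i\le t-1}\rho_i\lambda_i\bmod 2$; hence $\tau_\rho$ either fixes the factor $(-1)^{n_\lambda}\sigma_\lambda(z_{\lambda,n_\lambda})=y'_\lambda/z_\lambda$ (when $\rho\cdot\lambda$ is even) or sends it to $z_\lambda$ (when $\rho\cdot\lambda$ is odd), giving
\[
|\tau_\rho(Z)|=\prod_{\substack{\lambda\ne 0\\ \rho\cdot\lambda\text{ even}}}\frac{y'_\lambda}{z_\lambda}\cdot\prod_{\rho\cdot\lambda\text{ odd}}z_\lambda.
\]
The \textbf{main obstacle} of the proof is converting this identity into the claimed inequality. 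The required ingredients are the invariants $z_\lambda\le 1$ and $y'_\lambda<\sqrt{\delta_\lambda}$ from the first paragraph; the product identity $\prod_{\lambda\ne 0}|\delta_\lambda|=|d|^{m/2}$ (each $q_i^*$ occurs in exactly $m/2$ of the $\delta_\lambda$); and, crucially, the balance property forced by the selection rule in step~3, which makes $\max_\lambda z_\lambda$ monotonically non-increasing across iterations and keeps the $z_\lambda$ within a bounded ratio of each other. Absorbing, via this balance, the combinatorial asymmetry between the two products above (the set with $\rho\cdot\lambda$ odd has one more element than the even one) is the delicate step.

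The $O(\ln N_0)$ complexity follows from geometric growth of $Z$: by \eqref{el.contapprox}, $|z_{\lambda,n_\lambda}|\le 1/Q_{\lambda,n_\lambda}$, so each factor of $Z$ is at least $Q_{\lambda,n_\lambda}$, and \eqref{el.qlower} yields $Z\ge 2^{(\sum_\lambda n_\lambda-(m-1))/2}$; hence the loop condition $|A_{0,\dots,0}|<N_0$ fails after at most $2\log_2 N_0+(m-1)$ iterations.
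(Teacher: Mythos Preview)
Your outline tracks the paper's argument, but three steps are not yet in place. First, a small one: $X_{\lambda,0}=g_\lambda$ is \emph{not} reduced in general (for odd $\delta_\lambda>9$ the conjugate $(1-\sqrt{\delta_\lambda})/2<-1$); one must instead check directly that $X_{\lambda,1}$ is reduced in each parity case and handle $n_\lambda=0$ from the initial values $x_\lambda=0$, $y_\lambda=1$. Second, and more seriously, you correctly flag the main obstacle but do not resolve it. The paper's device is a sharp ratio lemma: $\max_{\mu\ne 0} z_{\mu,n_\mu}/\min_{\mu\ne 0} z_{\mu,n_\mu}\le\sqrt{|d|}$, proved by a one-line induction using only that step~3 selects the maximal $z_\lambda$ and that $X_{\lambda,n+1}<\sqrt{\delta_\lambda}\le\sqrt{|d|}$. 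Setting $\varepsilon=\max_\mu z_{\mu,n_\mu}$, this together with \eqref{zandcong} and $1\le y'_\lambda<\sqrt{|d|}$ gives $1/\varepsilon\le|\sigma_\lambda(z_{\lambda,n_\lambda})|\le|d|/\varepsilon$, whence $Z\le(|d|/\varepsilon)^{m-1}$ and $|\tau_\rho(Z)|\le(|d|/\varepsilon)^{m/2-1}\varepsilon^{m/2}=|d|^{m/2-1}\varepsilon\le\sqrt{|d|}^m/Z^{1/(m-1)}$. No product identity over the $\delta_\lambda$ is needed; in fact the one you state fails when $u=t-1$, since then the exponent of $q_t^*$ in every $\delta_\lambda$ is an empty XOR and $q_t^*$ never appears.

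Third, your complexity paragraph bounds $Z$ from below, but the loop tests $|A_{0,\dots,0}|$, not $Z$, and you have not linked the two. The paper closes this gap by invoking Theorem~\ref{el.finalapprox} (now applicable thanks to the Galois bound just established): it yields $|A_{0,\dots,0}|\ge|\mathfrak M(Id)\omega_{0,\dots,0}|Z-C\sqrt{|d|}^m$ with constants depending only on the bases, and combined with your lower bound $Z\ge 2^{(\sum_\lambda n_\lambda-(m-1))/2}$ this forces termination after $O(\ln N_0)$ iterations.
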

\begin{proof} We start from the bounds for $x'_{\lambda,n}$, $y'_{\lambda,n}$.
\begin{lemma}\label{xybound} Let $\lambda\in\{0,1\}^{t-1}$, $\lambda\ne0$. Let $n\ge1$ be an integer.
Then
$$
0\le x'_{\lambda,n}<\sqrt{\delta_\lambda}-g_\lambda,
$$
$$
0<y'_{\lambda,n}<\sqrt{\delta_\lambda},
$$
$$
-1<\sigma_\lambda(X_{\lambda,n})<0.
$$
\end{lemma}
\begin{proof}
Assume first that $\delta_\lambda$ is odd. By definition, $X_{\lambda,1}=\frac1{g_\lambda-\lfloor g_\lambda\rfloor}$.
Obviously, $X_{\lambda,1}>1$. In addition, $\sigma_\lambda(X_{\lambda,1})=\frac1{1-g_\lambda-\lfloor g_\lambda\rfloor}$
and $g_\lambda>1$ imply that $-1<\sigma_\lambda(X_{\lambda,1})<0$. Therefore, due to Statement \ref{el.venkov}
all complete quotients of $g_\lambda$ starting from $X_{\lambda,1}$ are reduced irrationals of determinant $\delta_\lambda$.
That is, $0<\sqrt{\delta_\lambda}-x_{\lambda,n}<y_{\lambda,n}<\sqrt{\delta_\lambda}+x_{\lambda,n}$ for $n\ge1$.
Since $x'_{\lambda,n}=\frac{x_{\lambda,n}-1}2$ and $y'_{\lambda,n}=\frac{y_{\lambda,n}}2$ in this case,
we obtain the required bounds.

Assume now that $\delta_\lambda$ is even. As in the first case, $X_{\lambda,1}=\frac1{g_\lambda-\lfloor g_\lambda\rfloor}>1$.
In addition, $\sigma_\lambda(X_{\lambda,1})=-\frac1{g_\lambda+\lfloor g_\lambda\rfloor}$ and $g_\lambda>1$ imply
that $-1<\sigma_\lambda(X_{\lambda,1})<0$. Therefore, due to Statement \ref{el.venkov} all complete quotients of $g_\lambda$
starting from $X_{\lambda,1}$ are reduced irrationals of determinant $\frac{\delta_\lambda}4$. That is,
$0<\frac{\sqrt{\delta_\lambda}}2-x_{\lambda,n}<y_{\lambda,n}<\frac{\sqrt{\delta_\lambda}}2+x_{\lambda,n}$ for $n\ge1$.
Since $x'_{\lambda,n}=x_{\lambda,n}$ and $y'_{\lambda,n}=y_{\lambda,n}$ in this case, we obtain
the required bounds.
\end{proof}

Since $X_{\lambda,n}=\frac{g_\lambda+x'_{\lambda,n}}{y'_{\lambda,n}}$, Lemma \ref{xybound} immediately implies

\begin{corollary} For $n\ge1$
\begin{equation}\label{el.abound}
X_{\lambda,n}<\sqrt{\delta_\lambda}.
\end{equation}
\end{corollary}

Let $n_\lambda$ denote the number of times when $\lambda$ was selected in the step 3 of the algorithm, $\lambda\ne0$.

The inequality $Z=\prod_{\lambda\ne0}((-1)^{n_\lambda}\sigma_\lambda(z_{\lambda,n_\lambda}))\ge1$
follows immediately from the last inequality of Lemma \ref{xybound} and the definition
$z_{\lambda,n_\lambda}=\frac1{X_{\lambda,1}\dots X_{\lambda,n_\lambda}}$.

\begin{lemma}\label{el.zbalanced}
$$
\frac{\max_{\mu\ne(0,\dots,0)}z_{\mu,n_\mu}}{\min_{\mu\ne(0,\dots,0)}z_{\mu,n_\mu}}\le\sqrt{|d|}.
$$
\end{lemma}
\begin{proof}
Before iterations the left-hand side equals 1, so the inequality holds.
Assume that the inequality holds after some number of iterations. Assume that
the step 3 of the next iteration selects the value $\lambda$, i.e.
$$
z_{\lambda,n_\lambda}=\max_{\mu\ne(0,\dots,0)}z_{\mu,n_\mu}.
$$
Let $n'_\lambda=n_\lambda+1$ and $n'_\mu=n_\mu$ for $\mu\ne\lambda$, $\mu\ne(0,\dots,0)$.
Obviously, $X_{\lambda,n'_\lambda}>1$, so $z_{\lambda,n'_\lambda}<z_{\lambda,n_\lambda}$.
There are two possible cases:
\begin{itemize}
\item $z_{\lambda,n'_\lambda}\ge\min_{\mu\ne(0,\dots,0)}z_{\mu,n_\mu}$. In this case
$$\min_{\mu\ne(0,\dots,0)}z_{\mu,n'_\mu}=\min_{\mu\ne(0,\dots,0)}z_{\mu,n_\mu},$$ therefore,
$$
\frac{\max_{\mu\ne(0,\dots,0)}z_{\mu,n'_\mu}}{\min_{\mu\ne(0,\dots,0)}z_{\mu,n'_\mu}}\le\frac{\max_{\mu\ne(0,\dots,0)}z_{\mu,n_\mu}}{\min_{\mu\ne(0,\dots,0)}z_{\mu,n_\mu}}
\le\sqrt{|d|}.
$$
\item $z_{\lambda,n'_\lambda}<\min_{\mu\ne(0,\dots,0)}z_{\mu,n_\mu}$. In this case
$\min_{\mu\ne(0,\dots,0)}z_{\mu,n'_\mu}=z_{\lambda,n'_\lambda}$; using \eqref{el.abound}, we obtain
$$
\frac{\max_{\mu\ne(0,\dots,0)}z_{\mu,n'_\mu}}{\min_{\mu\ne(0,\dots,0)}z_{\mu,n'_\mu}}\le\frac{z_{\lambda,n_\lambda}}{z_{\lambda,n'_\lambda}}=X_{\lambda,n_\lambda+1}
<\sqrt{\delta_\lambda}\le\sqrt{|d|}.
$$
\end{itemize}
\end{proof}

We recall that $\sigma_\lambda$ is an automorphism of the field $\Q(g_\lambda)\subset\mathcal M$.
Note that for any $\lambda$ and $\mu$ the automorphism $\tau_\mu$ can be restricted to the field $\Q(g_\lambda)$.
Since
\begin{multline*}
\tau_\mu\left(\sqrt{(q_1^*)^{\lambda_1}\ldots(q_{t-1}^*)^{\lambda_{t-1}}(q_t^*)^{\lambda_{u+1}\oplus\ldots\oplus\lambda_{t-1}}}\right)\\
=((-1)^{\mu_1}\sqrt{q_1^*})^{\lambda_1}\ldots((-1)^{\mu_{t-1}}\sqrt{q_{t-1}^*})^{\lambda_{t-1}}\sqrt{q_t^*}^{\lambda_{u+1}\oplus\ldots\oplus\lambda_{t-1}}\\
=(-1)^{\sum_{i=1}^{t-1}\lambda_i\mu_i}\sqrt{(q_1^*)^{\lambda_1}\ldots(q_{t-1}^*)^{\lambda_{t-1}}(q_t^*)^{\lambda_{u+1}\oplus\ldots\oplus\lambda_{t-1}}},
\end{multline*}
the restriction $\tau_\mu|_{\Q(g_\lambda)}$ acts trivially if $\sum_{i=1}^{t-1}\lambda_i\mu_i\equiv0\pmod2$
and coincides with $\sigma_\lambda$ otherwise.

Let $\max_{\mu\ne(0,\dots,0)}z_{\mu,n_\mu}=\varepsilon$. Lemma \ref{el.zbalanced} implies that
$$\frac\varepsilon{\sqrt{|d|}}\le z_{\lambda,n_\lambda}\le\varepsilon$$
for each $\lambda\ne(0,\dots,0)$. Equalities \eqref{el.zdef}, \eqref{zandcong} and Lemma \ref{xybound}
imply that $1\le z_{\lambda,n_\lambda}
\left|\sigma_\lambda\left(z_{\lambda,n_\lambda}\right)\right|<\sqrt{\delta_\lambda}\le\sqrt{|d|}$.
Thus,
$$
\frac1\varepsilon\le\left|\sigma_\lambda\left(z_{\lambda,n_\lambda}\right)\right|
\le\frac{|d|}\varepsilon.
$$
By construction,
$$
Z=\prod_{\lambda\ne0}\left|\sigma_\lambda\left(z_{\lambda,n_\lambda}\right)\right|
\le\left(\frac{|d|}\varepsilon\right)^{m-1},
$$
so
$$
\varepsilon\le\frac{|d|}{Z^{\frac1{m-1}}}.
$$
Let $\lambda\ne0$. The condition $\sum_{i=1}^{t-1}\lambda_i\mu_i\equiv0\pmod2$ as an equation for $\mu\in\{0,1\}^{t-1}$
has exactly $\frac n2$ solutions, including zero.
\begin{multline*}
\left|\tau_\lambda\left(\sum_\mu A_\mu\omega^*_\mu\right)\right|=
\prod_{2\mid\sum_i\lambda_i\mu_i,\mu\ne0}\left|\sigma_\mu\left(z_{\mu,n_\mu}\right)\right|
\cdot\prod_{2\nmid\sum_i\lambda_i\mu_i}\left|z_{\mu,n_\mu}\right|\\
\le\left(\frac{|d|}\varepsilon\right)^{\frac m2-1}\varepsilon^{\frac m2}=
|d|^{\frac m2-1}\varepsilon\le\frac{\sqrt{|d|}^m}{Z^{\frac1{m-1}}}.
\end{multline*}
It remains to show that the algorithm completes in $O(\ln N_0)$ iterations.
A part of theorem which is already proved allows to apply Theorem \ref{el.finalapprox}.
Thus, the following inequality holds in any step of the algorithm:
$$
|A_{0,\dots,0}|\ge|\mathfrak M(Id)\omega_{0,\dots,0}|Z-C\sqrt{|d|}^m,
$$
where constants $\mathfrak M(Id)\omega_{0,\dots,0}\ne0$ and $C\sqrt{|d|}^m$
depend only on basises.

Now \eqref{zandcong} implies
$$
Z=\prod_{\lambda\ne0}\frac{y'_{\lambda,n_\lambda}}{z_{\lambda,n_\lambda}}\ge\left(\prod_{\lambda\ne0}z_{\lambda,n_\lambda}\right)^{-1},
$$
with \eqref{el.zdef}, \eqref{el.contapprox} and \eqref{el.qlower} this yields
$$
Z\ge\left(\prod_{\lambda\ne0}\left|P_{\lambda,n_\lambda-1}-Q_{\lambda,n_{\lambda-1}}g_\lambda\right|\right)^{-1}\ge
\prod_{\lambda\ne0}Q_{\lambda,n_\lambda}\ge\prod_{\lambda\ne0}2^{\frac{n_\lambda-1}2}=2^{\frac{\sum_{\lambda\ne0}n_\lambda-(m-1)}2}.
$$
The sum $\sum_{\lambda\ne0}n_\lambda$ is the number of algorithm iterations. Thus, after $O(\ln N_0)$ iterations
the following inequality is reached:
$$
Z\ge\frac{N_0+C\sqrt{|d|}^m}{|\mathfrak M(Id)\omega_{0,\dots,0}|}.
$$
This implies $|A_{0,\dots,0}|\ge N_0$ and concludes the proof.
\end{proof}

\section{Calculation of an algebraic integer by its approximation}\label{el.get}

We want to calculate numbers $b_\mu\in\Z$ by an approximate value of $\sum_\mu b_\mu\beta_\mu$,
and also numbers $b'_\mu\in\Z$ by an approximate value of $\sum_\mu b'_\mu\beta^*_\mu$.
Section \ref{el.upbound} gives apriori bounds of the form
\begin{equation}\label{el.finalhighbound}\begin{array}{rcl}
\left|\tau_\lambda\left(\sum_\mu b_\mu\beta_\mu\right)\right|&\leq& T_0,\\
\left|\tau_\lambda\left(\sum_\mu b'_\mu\beta^*_\mu\right)\right|&\leq& T_0,
\end{array}\end{equation}
where $T_0$ depends only on $D$.
Section \ref{el.simapprox} gives a set of simultaneous approximations
to the numbers $\frac{\beta_\mu}{\beta_{0,\dots,0}}$ and another set
for the numbers $\frac{\beta^*_\mu}{\beta^*_{0,\dots,0}}$. The precision
of these approximations depends on a parameter $N_0$.

Approximations constructed in Section \ref{el.simapprox} satisfy
Theorem \ref{el.approxalg} which will be used.
(One can prove that any simultaneous approximations $\Lambda_i$
to a basis $W_i$ with a bound of the form
$\left|\frac{\Lambda_i}{\Lambda_1}-\frac{W_i}{W_1}\right|\le\frac{C'_i}{|\Lambda_1|^{1+\alpha}}$
satisfy the last bound from Theorem \ref{el.approxalg} with an exponent $\alpha$
instead of $\frac1{m-1}$. Thus, actually any sufficiently good approximations
can be used.)

We continue to use the basises $\omega_\mu$, $\omega^*_\mu$ and the function
$\mathfrak M$ defined in \eqref{el.omega1def} (for $b_\mu$) or \eqref{el.omega2def}
(for $b^*_\mu$). It is easy to see that they satisfy the following property
additionally to properties 1--3 of $\mathfrak M$-pairs:
\begin{enumerate}
\item[2'.] If $x\in\ro_{\mathcal M}$, then $\omega_\xi x$ is a linear combination of $\{\omega_\mu\}$ with integer coefficients.
\end{enumerate}

For definiteness, we show how to find $b_\mu$; the method for $b'_\mu$ is analogous.

Let $X_\eta\in\ro_{\mathcal M}$ be a set of $m=2^{t-1}$ numbers linearly independent over $\Q$.
For example, one possible choice is $X_\eta=\beta_\eta$; another possible choice is $X_{0,\dots,0}=1$
and $X_\eta=g_\eta$ for $\eta\ne(0,\dots,0)$. The property 2' implies that
\begin{equation}\label{el.xmultdef}
\omega_\xi X_\eta=\sum_\mu x_{\mu\xi\eta}\omega_\mu,
\end{equation}
with $x_{\mu\xi\eta}\in\Z$. (The choice $X_\eta=g_\eta$ is convenient in that $x_{\mu\xi\eta}$
are the same as $c_{\mu\xi\eta}$ with transposed $\beta_\mu$ and $\beta_\mu^*$. The choice
$X_\eta=\beta_\eta$ results in numbers $x_{\mu\xi\eta}$ which are slightly less in the absolute value.)

Assume that the precision $\varepsilon$ is selected. We know the value of the sum $\sum_\xi b_\xi\beta_\xi$
with the precision $\varepsilon$; in other words, we know a number $\gamma$ such that
$\left|\sum_\xi b_\xi\beta_\xi-\gamma\right|<\varepsilon$. Divide this inequality by
$\beta_{0,\dots,0}$ and multiply by $X_\eta$.
$$
\left|\sum_\xi b_\xi\omega_\xi X_\eta-\frac{\gamma X_\eta}{\beta_{0,\dots,0}}\right|\le\frac{\varepsilon|X_\eta|}{|\beta_{0,\dots,0}|},
$$
\begin{equation}\label{el.bifzero}
\left|\sum_\mu\left(\sum_\xi b_\xi x_{\mu\xi\eta}\right)\omega_\mu-\frac{\gamma X_\eta}{\beta_{0,\dots,0}}\right|\le
\frac{\varepsilon|X_\eta|}{|\beta_{0,\dots,0}|}.
\end{equation}
Let $B_{\mu\eta}=\sum_\xi b_\xi x_{\mu\xi\eta}\in\Z$. For any $\mu'$ we have from \eqref{el.dual} that
$$
B_{\mu'\eta}=\sum_\mu B_{\mu\eta}\sum_{\lambda}\mathfrak M(\tau_\lambda)\tau_\lambda(\omega_\mu\omega^*_{\mu'})=
\sum_{\lambda}\mathfrak M(\tau_\lambda)\tau_\lambda(\omega^*_{\mu'})\tau_\lambda\left(\sum_\mu B_{\mu\eta}\omega_\mu\right);
$$
\begin{equation}\label{el.almostsys}
\sum_{\mu'}A_{\mu'}B_{\mu'\eta}=\sum_\lambda\mathfrak M(\tau_\lambda)\tau_\lambda\left(\sum_{\mu'}A_{\mu'}\omega^*_{\mu'}\right)\tau_\lambda\left(\sum_\mu B_{\mu\eta}\omega_\mu\right).
\end{equation}
The term with $\lambda=0$ is special. In this case \eqref{el.bifzero} gives an approximate value of the last factor
with a bound for approximation error. Now consider $\lambda\ne0$.
Theorem \ref{el.approxalg} gives a bound for the second factor.
$$
\tau_\lambda\left(\sum_\mu B_{\mu\eta}\omega_\mu\right)=\tau_\lambda\left(\sum_\xi b_\xi\sum_\mu x_{\mu\xi\eta}\omega_\mu\right)=
\tau_\lambda\left(\sum_\xi b_\xi\omega_\xi\right)\tau_\lambda(X_\eta),
$$
with \eqref{el.finalhighbound} this implies that
$$
\left|\tau_\lambda\left(\sum_\mu B_{\mu\eta}\omega_\mu\right)\right|\leq T_0\left|\tau_\lambda(X_\eta)\right|.
$$
Therefore, \eqref{el.almostsys}, \eqref{el.bifzero} and Theorem \ref{el.approxalg} imply that
\begin{equation}\label{el.almostsys2}
\left|\sum_{\mu'}A_{\mu'}B_{\mu'\eta}-\mathfrak M(Id)Z\frac{\gamma X_\eta}{\beta_{0,\dots,0}}\right|
\le\left|\mathfrak M(Id)Z\right|\frac{\varepsilon|X_\eta|}{|\beta_{0,\dots,0}|}+\sum_{\lambda\ne0}|\mathfrak M(\tau_\lambda)|\frac{\sqrt{|d|}^m}{Z^{\frac1{m-1}}}T_0|\tau_\lambda(X_\eta)|,
\end{equation}
where $Z=\sum_\mu A_\mu\omega^*_\mu$ as above.

The second term is a ratio of some constant to $Z^{\frac1{m-1}}$. Since $A_{0,\dots,0}=\Lambda_1$, the inequality
\eqref{el.finalapprox.z} shows that the threshold $N_0$ can be selected such that the bound
\begin{equation}\label{el.zused}
Z>\left(4\sum_{\lambda\ne0}|\mathfrak M(\tau_\lambda)\tau_\lambda(X_\eta)|\sqrt{|d|}^m T_0\right)^{m-1}
\end{equation}
holds, and then the second term in the right-hand side of \eqref{el.almostsys2} is less than $\frac14$.

Assume that such a threshold $N_0$ is selected. Calculate simultaneous approximations $A_\mu$, then compute $Z$.
Select $\varepsilon$ so that for each $\eta$ the inequality
\begin{equation}\label{el.epsilonused}
\varepsilon<\frac14\frac{|\beta_{0,\dots,0}|}{|\mathfrak M(Id)X_\eta|Z}.
\end{equation}
holds. Then the first term in the right-hand side of \eqref{el.almostsys2} is also less than $\frac14$.
Thus, the left-hand side of \eqref{el.almostsys2} is less than $\frac12$. Since $\sum_{\mu'}A_{\mu'}B_{\mu'\eta}\in\Z$,
we can recover the exact value of this sum by rounding $\mathfrak M(Id)Z\frac{\gamma X_\eta}{\beta_{0,\dots,0}}$ to an integer.

Now we obtain a system of linear equations for $b_\xi$ with the left-hand side
\begin{equation}\label{el.leftpartused}
\sum_\mu A_\mu B_{\mu\eta}=\sum_\xi\left(\sum_\mu A_\mu x_{\mu\xi\eta}\right)b_\xi.
\end{equation}

\begin{lemma} The matrix
$\left(\sum_\mu A_\mu x_{\mu\xi\eta}\right)_{\xi,\eta\in\{0,1\}^{t-1}}$
is nonsingular.
\end{lemma}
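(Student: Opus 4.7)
The plan is to show that the matrix $M = (M_{\xi\eta})$ with $M_{\xi\eta}=\sum_\mu A_\mu x_{\mu\xi\eta}$ factors as a product of two matrices of nonzero determinant, each of which is essentially a Galois discriminant matrix for a $\mathbb{Q}$-basis of $\mathcal{M}$.

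First I would use property (\ref{el.dual}) to rewrite the structure constants $x_{\mu\xi\eta}$ in closed form. Multiplying the defining identity $\omega_\xi X_\eta=\sum_\mu x_{\mu\xi\eta}\omega_\mu$ by $\omega^*_{\mu_0}$ and applying the trace-like functional $x\mapsto \sum_\lambda\mathfrak{M}(\tau_\lambda)\tau_\lambda(x)$, the right-hand side collapses to $x_{\mu_0\xi\eta}$ by (\ref{el.dual}), giving
$$
x_{\mu\xi\eta}=\sum_{\lambda\in\{0,1\}^{t-1}}\mathfrak{M}(\tau_\lambda)\,\tau_\lambda\bigl(\omega_\xi X_\eta\omega^*_\mu\bigr).
$$
Substituting into the definition of $M_{\xi\eta}$ and pulling the integers $A_\mu$ inside $\tau_\lambda$ (they are fixed by the Galois group), I obtain
$$
M_{\xi\eta}=\sum_{\lambda}\mathfrak{M}(\tau_\lambda)\,\tau_\lambda(\omega_\xi)\,\tau_\lambda(X_\eta)\,\tau_\lambda(Y),
\qquad Y:=\sum_\mu A_\mu\omega^*_\mu=Z.
$$

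Now I would recognize this as a product $M=U\,V^{T}$, where $U_{\xi\lambda}=\tau_\lambda(\omega_\xi)$ and $V_{\eta\lambda}=\mathfrak{M}(\tau_\lambda)\,\tau_\lambda(Y)\,\tau_\lambda(X_\eta)$. Writing $V=V'\,D$ with $V'_{\eta\lambda}=\tau_\lambda(X_\eta)$ and $D$ the diagonal matrix with entries $\mathfrak{M}(\tau_\lambda)\tau_\lambda(Y)$, we get
$$
\det M=\det U\cdot\det V'\cdot\prod_{\lambda}\mathfrak{M}(\tau_\lambda)\tau_\lambda(Y).
$$
Then I would check each factor is nonzero. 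The matrices $U$ and $V'$ are exactly the Galois conjugacy matrices for the $\mathbb{Q}$-bases $\{\omega_\xi\}$ and $\{X_\eta\}$ of $\mathcal{M}$ (both families are $\mathbb{Q}$-linearly independent in $\mathcal{M}$ and have cardinality $m=[\mathcal{M}:\mathbb{Q}]$), so the squares of their determinants are the nonzero basis discriminants; in particular $\det U\ne 0$ and $\det V'\ne 0$. By Theorem~\ref{el.approxalg} we have $Y=Z\ge 1>0$, so $\tau_\lambda(Y)\ne 0$ for every $\lambda$. Finally, from the explicit formulas (\ref{el.omega1def}) and (\ref{el.omega2def}), $\mathfrak{M}(\tau_\mu)=\pm\tau_\mu(\beta_{0,\dots,0}\beta^*_{0,\dots,0})/\sqrt{d}$, which is nonzero since $\beta_{0,\dots,0}$ and $\beta^*_{0,\dots,0}$ are nonzero algebraic numbers preserved up to sign by the Galois action.

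The computation is essentially algebraic; the one place where care is required is the reduction from $x_{\mu\xi\eta}$ to the trace formula, because it is the only step that uses the defining duality (\ref{el.dual}) and justifies the clean factorization $M=UV^{T}$. Once that is in place, nonsingularity reduces to the well-known nonvanishing of Galois discriminants, and the lemma follows.
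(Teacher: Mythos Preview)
Your proof is correct and rests on the same ingredients as the paper's: the duality relation \eqref{el.dual}, the Galois-conjugacy matrices of the bases $\{\omega_\xi\}$ and $\{X_\eta\}$, and the fact that $Z=\sum_\mu A_\mu\omega^*_\mu\ne0$ from Theorem~\ref{el.approxalg}. The organization differs slightly: the paper argues by contradiction, first deriving the identity $\sum_\xi\omega^*_\xi x_{\mu\xi\eta}=X_\eta\omega^*_\mu$ via the matrix equalities $M_1^TM_3M_2=E$ and $M_2X^T=M_4M_2$, and then showing that a null vector $(y_\eta)$ would force $(\sum_\eta X_\eta y_\eta)(\sum_\mu A_\mu\omega^*_\mu)=0$; you instead extract the trace formula $x_{\mu\xi\eta}=\sum_\lambda\mathfrak M(\tau_\lambda)\tau_\lambda(\omega_\xi X_\eta\omega^*_\mu)$ directly and compute $\det M$ as a product. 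Your route is a bit more streamlined; one small remark is that you need not invoke the explicit formulas \eqref{el.omega1def}--\eqref{el.omega2def} to see $\mathfrak M(\tau_\lambda)\ne0$, since this already follows from \eqref{el.dual} written as $M_1^TM_3M_2=E$ (hence $M_3$ invertible), which is exactly how the paper gets it.
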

\begin{proof}
Assume that this matrix is singular. Equivalently, there exist numbers $y_\eta\in\Q$ such that
not all of them are zero and
\begin{equation}\label{el.goodmatrix1}
\sum_\eta\sum_\mu A_\mu x_{\mu\xi\eta}y_\eta=0.
\end{equation}

Fix some $\eta$. Consider the following square matrices:
\begin{eqnarray*}
(M_1)_{\mu_1\mu_2}&=&\tau_{\mu_1}(\omega_{\mu_2}),\\
(M_2)_{\mu'_1\mu'_2}&=&\tau_{\mu'_1}(\omega^*_{\mu'_2}),\\
(X)_{\mu''_1\mu''_2}&=&x_{\mu''_1\mu''_2\eta}
\end{eqnarray*}
and diagonal matrices $M_3$ with elements $\mathfrak M(\tau_\mu)$ and $M_4$
with elements $\tau_\mu(X_\eta)$. The equality \eqref{el.dual} can be interpreted
as matrix equality $M_1^TM_3M_2=E$, where $E$ is the identity matrix. In particular,
$M_1$, $M_2$ and $M_3$ are invertible. The set of all equalities obtained from
\eqref{el.xmultdef} under the action of all $\tau_\mu$, can be interpreted as
matrix equality $M_4M_1=M_1X$. Thus $X=M_1^{-1}M_4M_1$, $X^T=M_1^TM_4(M_1^T)^{-1}=
M_2^{-1}M_3^{-1}M_4M_3M_2$. Since any two diagonal matrices commute, $M_4M_3=M_3M_4$,
so $M_2X^T=M_4M_2$. Comparing the element in the line 1 and the column $\mu$, we
obtain
$$
\sum_\xi\omega_\xi^*x_{\mu\xi\eta}=X_\eta\omega^*_\mu.
$$

Now let $\eta$ vary. Multiply \eqref{el.goodmatrix1} by $\omega^*_\xi$ and
sum over all $\xi\in\{0,1\}^{t-1}$:
$$
\sum_\eta\sum_\mu A_\mu X_\eta\omega^*_\mu y_\eta=0,
$$
$$
\left(\sum_\eta X_\eta y_\eta\right)\left(\sum_\mu A_\mu\omega^*_\mu\right)=0.
$$
But the first factor is nonzero because $X_\eta$ are linearly independent over $\Q$
and not all of $y_\eta\in\Q$ are zero. The second factor is nonzero due to Theorem \ref{el.approxalg}.
The contradiction proves the lemma.
\end{proof}

So it is sufficient to solve a linear system $m\times m$ with nonsingular matrix to find $\{b_\mu\}$.
For example, one can use the standard Gaussian elimination.

Finally, we give an overall scheme for our optimization of the CM method.
\begin{enumerate}
\item Select numbers $q=p^n$, $\hat u,\hat v,D\in\Z$ as in the stage 1 of the basic algorithm
from Subsection \ref{basemethod}. The future curve will be defined over $\F_q$ and have
the order $q+1-\hat u$.
\item Enumerate all reduced forms. Calculate $T_0$ from \eqref{el.t0used}, $N_0$
from \eqref{el.zused}, using \eqref{el.finalapprox.z}. Apply the algorithm from
Section \ref{el.simapprox}.
\item Calculate the required precision $\varepsilon$ from \eqref{el.epsilonused}.
Calculate the polynomial $\hat H_D[j]$ by the definition \eqref{el.hdused}
approximately with the precision $\varepsilon$.
\item For each coefficient of the polynomial calculate the decomposition of
doubled real part as a $\Z$-linear combination of $\beta_\mu$.
In order to do this, obtain a system
of linear equations with the left-hand side \eqref{el.leftpartused} using
\eqref{el.almostsys2} and solve this system. Similarly calculate the decomposition
of doubled imaginary part as a $\Z$-linear combination of $\beta^*_\mu$.
(If the coefficient is known to be real, the stage for imaginary part is
not necessary and one can avoid doubling the real part.)
\item Reduce the polynomial modulo any prime ideal of $\ro_{K_G}$ lying above $p$,
obtain a polynomial over $\F_q$. Calculate any root in $\F_q$ (there always is one).
Construct an elliptic curve $E''$ over $\F_q$ with $j$-invariant equal to the found root.
\item If the order $E''$ is not the same as required, apply an isomorphism from
Subsection \ref{basemethod} (quadratic twist if $D<-4$).
\end{enumerate}
As in the original method, one can use another functions $\theta$ (described in Subsection \ref{el.otherinv})
instead of $j$. This requires correcting the bound $T_0$ as described in Section \ref{el.upbound},
using $\hat H_D[\theta,\alpha_*]$ instead of $\hat H_D[j]$, and calculating $j$-invariant
by the found value of $\theta$ as described in Subsection \ref{el.otherinv}.

\newcommand\BibAuthor[1]{#1}
\newcommand\BibTitle[1]{#1}
\newcommand\No{N}

\end{document}